\newtheorem{theorem}{Theorem}[section]
\newtheorem{lemma}[theorem]{Lemma}
\newtheorem{corollary}[theorem]{Corollary}
\newtheorem{proposition}[theorem]{Proposition}
\newtheorem{example}[theorem]{Example}
\newtheorem*{remark}{Remark}
\theoremstyle{definition}
\newtheorem{definition}[theorem]{Definition}
\numberwithin{equation}{section}
\newcommand{\N}{{\mathbb{N}}}
\newcommand{\R}{{\mathbb{R}}}
\def\1{{{\mathit 1} \!\!\>\!\! I} }
\renewcommand{\phi}{\varphi}
\title[Dynamics of piecewise contractions]{Dynamics of piecewise contractions of the interval}
\subjclass[2000]{Primary 37E05 Secondary 37C20, 37E15}
\keywords{Piecewise contraction of the interval, topological dynamics, periodic orbits}
\begin{document}

\maketitle

\centerline{\scshape Arnaldo Nogueira\footnote{Partially supported by ANR Perturbations and EurDynBraz.}}
\smallskip
{\footnotesize
 \centerline{Institut de Math\'ematiques de Luminy,  Aix-Marseille Universit\'e}
 \centerline {163, avenue de Luminy - Case 907, 13288 Marseille Cedex 9, France}
 \centerline{arnaldo.nogueira@univ-amu.fr}} 

\medskip

\centerline{\scshape Benito Pires \footnote{Partially supported by FAPESP-BRAZIL (2009/02380-0 and \mbox{2008/02841-4) and by DynEurBraz.}}}
\smallskip

{\footnotesize
 \centerline{Departamento de Computa\c c\~ao e Matem\'atica, Universidade de S\~ao Paulo}
   \centerline{Av. Bandeirantes 3900, Monte Alegre, 14040-901, Ribeir\~ao Preto - SP, Brazil}
   \centerline{benito@usp.br}
}

\bigskip


\marginsize{2.5cm}{2.5cm}{1cm}{2cm}

\begin{abstract} 
We study the asymptotical behaviour of iterates of piecewise contractive maps of the interval. 
It is known that Poincar\'e first return maps induced by some Cherry flows on transverse intervals are, up to topological conjugacy, piecewise contractions. 
These maps also appear in discretely controlled dynamical systems, describing the time evolution of
manufacturing process adopting some decision-making policies. An injective map $f:[0,1)\to [0,1)$ is a {\it piecewise contraction of
$n$ intervals}, if there exists a partition of the interval $[0,1)$ into $n$ intervals \mbox{$I_1$,  \ldots, $I_n$}   such that for every $i\in\{1,\ldots,n\}$,  the restriction  $f\vert_{I_i}$ is $\kappa$-Lipschitz for some $\kappa\in (0,1)$. We prove that every  piecewise contraction $f$ of  $n$ intervals has at most $n$ periodic orbits. Moreover, we show that every piecewise contraction is topologically conjugate to a piecewise linear contraction.

 \end{abstract}

\maketitle

\section{Introduction}

\noindent
The main subject of this article is the asymptotical behaviour of iterates of  piecewise contractive maps of the interval.
Let \mbox{$0<\kappa<1$} be a constant, $n\ge 1$  an integer and $$0=x_0<x_1<\ldots< x_{n-1}<x_n=1.$$ Let $I_1,\ldots,I_n$ be $n$ pairwise disjoint intervals such that $[0,1)=\bigcup_{i=1}^n I_i$ and, for every
\mbox{$i\in\{1,\ldots,n\}$}, $x_{i-1}$ and $x_i$ are the endpoints of $I_i$. Let $f:[0,1)\to [0,1)$ be an injective map such that $x_1,\ldots,x_{n-1}$ are jump discontinuities of $f$ and $f\vert_{I_i}$ is $\kappa$-Lipschitz for every $i\in\{1,\ldots,n\}$. Such \mbox{map $f$} is called here a
{\it piecewise contraction of $n$ intervals}. 

 A point $p\in [0,1]$ is an {\it $\omega$-limit point} of $x$ if there is a sequence of positive integers
  $n_1< n_2<\cdots$ such that 
  $\lim_{\ell\to\infty} f^{n_{\ell}}(x)=p$. The collection of all such $\omega$-limit points is the
  {\it $\omega$-limit set of $x$}, denoted by $\omega(x)$. We say that $f$ is {\it asymptotically periodic} if $\omega(x)$ is a periodic orbit of $f$ for every $x\in [0,1)$.

This article is motivated by the work of Br\'emont \cite{JB}, where it is proved that every piecewise contraction of $n\ge 2$ intervals can be arbitrarily approximated by an asymptotically periodic piecewise contraction of $n$ intervals having at most $2(n-1)$ periodic orbits. Such result was obtained under the assumption  that the continuity intervals are semi-open (e.g. $I_i=[x_{i-1},x_i)$). We  prove here that $n$ is the sharpest upper bound for the number of periodic orbits of all piecewise contractions of \mbox{$n$ intervals}.  No assumption is made on the definition of the partition $I_1,I_2,\ldots,I_n$.

The dynamics of piecewise contractions of 2 intervals was studied by
 \mbox{Gambaudo and Tresser \cite{GT}}. Examples of order-preserving piecewise contractions of $2$ intervals having irrational rotation number and no periodic orbit appear in Coutinho \cite{RC} and Veerman \cite{PV}. Concerning piecewise contractions of $n\ge 2$ intervals, Guti{\'e}rrez \cite{G1} proved that first return maps to a transverse interval of some Cherry flows are, up to topological conjugacy, piecewise linear contractions having no periodic orbit. The topological conjugacy can be made smooth in many cases \mbox{(see Guti{\'e}rrez \cite{G2})}. Such examples are not typical: arbitrarily small $C^r$-closing perturbations of them yield periodic orbits (see Guti{\'e}rrez and Pires \cite{GP}).
  
 Our main results are the following.
    
 \begin{theorem}\label{main} Every  piecewise contraction of $n$ intervals $f$ has at most
$n$ periodic orbits. Moreover,
  if $f$ has $n$ periodic orbits, then $f$ is asymptotically periodic.
\end{theorem}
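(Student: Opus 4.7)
The plan is to study the immediate basins of attraction of the periodic orbits and to show that their boundaries can be tagged by a finite set of size at most $2n$, so the number $m$ of orbits satisfies $2m \le 2n$, giving $m \le n$.

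First I observe that no periodic orbit can contain a discontinuity $x_i$, since these are jump discontinuities of $f$. Hence every periodic point $p$ of period $k$ lies in the interior of some piece $I_{i(p)}$, and on a small enough neighborhood of $p$ the iterate $f^k$ is continuous and a local $\kappa^k$-contraction fixing $p$. For each orbit $O=\{p_0,\ldots,p_{k-1}\}$ I define the \emph{immediate basin} $V(O)$ to be the maximal open interval containing $p_0$ such that, for every $0 \le j < k$, $f^j$ is continuous on $V(O)$ and $f^j(V(O))$ lies in the interior of the piece containing $p_j$. Then $f^k|_{V(O)}$ is a continuous $\kappa^k$-contraction with fixed point $p_0$, so $V(O) \subset \{x : \omega(x) = O\}$; the immediate basins of distinct orbits are therefore pairwise disjoint open intervals $V(O_j)=(a_j,b_j)\subset [0,1)$.

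Next I analyse the endpoints. For each interior discontinuity $x_i$, the one-sided limits $L_i = \lim_{x\to x_i^-}f(x)$ and $R_i = \lim_{x\to x_i^+}f(x)$ seed two \emph{ghost forward orbits}, extended by the same one-sided convention whenever they subsequently meet a discontinuity. By maximality of $V(O_j)$, each endpoint $a_j\ne 0$ (resp.\ $b_j\ne 1$) arises because some iterate $f^\ell$ fails to be continuous at $a_j^+$ (resp.\ $b_j^-$); I tag this endpoint with the ghost orbit that the iterates $f^{\ell+1}(x),f^{\ell+2}(x),\ldots$ for $x$ slightly inside the basin actually track. Because those $x$ lie in $V(O_j)$, the tagged ghost orbit converges to $O_j$. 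The central combinatorial claim is that no ghost orbit can serve as the tag of more than one basin endpoint across all basins: if two endpoints carried the same ghost-orbit tag, the points on the appropriate side of the two tagged preimages would share the same asymptotic behaviour, forcing the corresponding basins to have the same $\omega$-limit and hence to coincide. Granting this, the number of available tag slots is at most $2(n-1)$ (one per side per interior discontinuity) plus the two domain endpoints $0$ and $1$, for a total of $2n$; since each of the $m$ basins consumes two such slots, $m \le n$.

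For the second assertion, if $m=n$ then every tag slot is used. The complement $[0,1) \setminus \bigcup_j V(O_j)$ then consists precisely of backward iterates of the tagged preimages, and by the tracking argument above every such point is eventually mapped into some $V(O_{j'})$, so $\omega(x)$ is a periodic orbit for every $x \in [0,1)$ and $f$ is asymptotically periodic. The principal obstacle is the no-sharing claim for ghost orbits: one must track the full sequence of iterates on both sides of the alleged shared tag, carefully handle the case where the ghost orbit subsequently encounters further discontinuities (forcing a choice of one-sided continuation), and then invoke both the injectivity of $f$ and the strict contraction on each piece to force the two candidate basins to coincide. Everything else in the argument is routine bookkeeping once this injectivity of the tagging is established.
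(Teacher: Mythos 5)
There is a genuine gap, and it occurs in your very first sentence. The claim that ``no periodic orbit can contain a discontinuity $x_i$, since these are jump discontinuities of $f$'' is false: $f$ is a map defined on all of $[0,1)$, each $x_i$ belongs to exactly one of the pieces $I_1,\ldots,I_n$ (which may be closed at either end), so $f(x_i)$ is a well-defined value equal to one of the one-sided limits, and the forward orbit of $x_i$ can perfectly well be periodic. The paper's Examples 1 and 3 exhibit exactly this: $f_2$ has a fixed point at a discontinuity whose basin of attraction is the single point itself, and $h_2$ has a degenerate $3$-periodic orbit through a discontinuity attracting nothing else. These \emph{degenerate} periodic orbits have immediate basins with empty interior, so they are invisible to your construction of the open intervals $V(O)$ and to any endpoint-tagging scheme. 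Consequently your argument can at best bound the number of \emph{regular} (attractive) periodic orbits --- which is the content of the paper's Theorem \ref{weaker}, not of Theorem \ref{main}. The entire difficulty of the general case, and the reason the paper introduces the Combinatorial Lemma (Lemma \ref{comb}) and Lemmas \ref{ssss}--\ref{xykmod}, is to show that each degenerate orbit consumes a discontinuity that would otherwise be available to tag a regular orbit's basin, so that regular and degenerate orbits together still number at most $n$. That mechanism is entirely absent from your proposal.

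Two secondary problems would remain even for the regular orbits. First, your $V(O)$ only controls the first $k$ iterates; without also arranging $f^k(V(O))\subset V(O)$ (a $\kappa^k$-contraction fixing $p_0$ need not map an arbitrary interval containing $p_0$ into itself), you cannot conclude $V(O)\subset\{x:\omega(x)=O\}$ --- this is why the paper builds trapping intervals as $\bigcup_{\ell\ge 0}f^{\ell k}(K)$ in Lemma \ref{eti}. Second, the ``no-sharing'' claim for ghost-orbit tags is exactly the crux and is left unproved; note also that your injectivity argument (``the two basins would coincide'') does not exclude the two endpoints of the \emph{same} basin sharing a tag, in which case the count $2m\le 2n$ collapses to $m\le 2n$. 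The paper avoids this by tagging each basin once, via the map $\beta$ sending $W_j$ to the first point of the forward orbit of $\inf W_j$ that lands in $\{x_0,\ldots,x_{n-1}\}$, and proving injectivity of that single tag per basin (Lemma \ref{xyk}).
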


For completeness sake, we include here the next result. Its proof is adapted from  \cite[Lemma 3, p. 314]{G1} and is left
to \mbox{Section 6}.

\begin{theorem}\label{main2} Every piecewise contraction of $n$ intervals is topologically conjugate to
a piecewise linear contraction of $n$ intervals whose slopes in absolute value equal $\frac12$. 
\end{theorem}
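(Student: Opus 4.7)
The plan, adapted from \cite[Lemma 3, p.~314]{G1}, is to exhibit a piecewise linear contraction $g:[0,1)\to[0,1)$ with every branch of absolute slope $1/2$ together with an order-preserving bijection $h:[0,1)\to[0,1)$ satisfying $h\circ f=g\circ h$. The first step is to extract from $f$ the combinatorial data that any topological conjugacy must preserve: for each $i\in\{1,\ldots,n\}$ the orientation $\sigma_i\in\{+1,-1\}$ of the branch $f|_{I_i}$, the left-to-right ordering of the image intervals $f(I_1),\ldots,f(I_n)$ inside $[0,1)$, and the incidence pattern between the partition endpoints $\{x_0,\ldots,x_n\}$ and the image intervals (i.e.\ which breakpoints each $f(I_j)$ straddles). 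These discrete invariants are the entire topological content of $f$, and any two piecewise contractions sharing them should be topologically conjugate.

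I would then construct the linear model by declaring $g|_{J_i}(y)=\frac{\sigma_i}{2}y+\gamma_i$ on a partition $[0,1)=J_1\cup\cdots\cup J_n$ and choosing the lengths $\ell_i=|J_i|$ and the intercepts $\gamma_i$ so that $g$ is injective and reproduces the combinatorial type of $f$. Since each branch of $g$ shrinks by a factor of $2$, the image $g([0,1))$ has total length $1/2$, leaving the other half of the interval as free ``gap mass'' to distribute among the complementary intervals; this is ample room to realise the prescribed ordering and incidence pattern, and producing the explicit parameters is a routine if fiddly exercise in linear inequalities. With $g$ in hand, I would build $h$ by an orbit-by-orbit extension. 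On the forward orbit of the breakpoints under $f$, set $h(f^k(x_i))=g^k(y_i)$, where $y_i$ is the breakpoint of $g$ corresponding to $x_i$; the matching of combinatorial types makes this prescription order-consistent. I then extend $h$ to all of $[0,1)$ by monotonicity, defining $h(x)=\sup\{h(z): z\leq x,\ z\text{ in the orbit}\}$, and verify bijectivity on each basin of attraction of a periodic orbit by invoking the standard monotone conjugacy between any two contractions of an interval sharing the same fixed point structure.

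The most delicate part is producing the piecewise linear model $g$: one must explicitly choose $\ell_i$ and $\gamma_i$ so that the resulting $g$ has exactly the combinatorial type of $f$ and slopes of absolute value $1/2$. The value $1/2$ is not essential --- any slope $\kappa\in(0,1)$ would work, since what matters is that the free gap mass $1-\kappa>0$ suffices to accommodate the prescribed arrangement --- and the degree-of-freedom count ($2n-1$ continuous parameters against $2n-1$ combinatorial constraints, modulo the normalization $\sum\ell_i=1$) supports realisability. Once $g$ is in place, the construction of $h$ and the verification of the conjugacy equation $h\circ f=g\circ h$ reduce to standard monotone extension arguments, so the entire weight of the proof rests on the explicit parameter choice in the realisability step.
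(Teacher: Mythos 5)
There is a genuine gap, and it sits at the very first step of your plan. You assert that the branch orientations, the left-to-right ordering of the image intervals $f(I_1),\ldots,f(I_n)$, and the incidence of those images with the breakpoints constitute ``the entire topological content of $f$,'' so that any piecewise contraction realising the same finite combinatorial data is topologically conjugate to $f$. This is false. That data records only what happens under \emph{one} application of $f$; a topological conjugacy must preserve the full itinerary of every orbit under iteration, which is an infinite amount of combinatorial information not determined by the one-step picture. The paper itself supplies counterexamples to your claim: among order-preserving piecewise contractions of $2$ intervals with the same one-step arrangement, some have a periodic orbit and others (Coutinho, Veerman, the Cherry-flow return maps of Guti\'errez) have irrational rotation number and no periodic orbit at all, and these cannot be conjugate. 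Consequently the ``realisability'' step --- choosing $\ell_i,\gamma_i$ to match the finite combinatorial type --- cannot be the whole weight of the proof, because matching that type does not force conjugacy. Your fallback for building $h$ is also not available in general: extending $h$ monotonically from the forward orbits of the breakpoints and then ``verifying bijectivity on each basin of attraction of a periodic orbit'' presupposes that periodic orbits exist and that their basins cover the interval, which fails exactly in the aperiodic examples above.

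The paper's proof avoids classifying combinatorics altogether. Using the measurable partition $\{f^{\ell}(F_j)\}$ of Lemma \ref{3itens}, it defines a non-atomic Borel probability measure $\nu$, positive on open intervals, with the self-similarity property $\nu(f(B))=\tfrac12\nu(B)$ for every interval $B$. The conjugacy is then simply the distribution function $h(x)=\nu((0,x))$, which is a homeomorphism of $[0,1)$, and the conjugated map $\hat f=h\circ f\circ h^{-1}$ is \emph{automatically} piecewise linear with slopes $\pm\tfrac12$ because of how $\nu$ scales under $f$. In other words, the linear model is not constructed in advance and then matched to $f$; it is produced directly from $f$ by a change of coordinates. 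If you want to salvage your approach you would need to replace your finite invariant by the full symbolic itinerary data and prove a realisation-plus-rigidity theorem for that, which is a substantially harder route than the measure-theoretic one.
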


The proof of Theorem \ref{main} is much easier in the special case where $I_i=[x_{i-1},x_i)$ and
$f\vert _{I_i}$ is increasing for every \mbox{$i\in\{1,\ldots,n\}$} \mbox{(see the Appendix A)}. In this case, all the periodic orbits are attractive and so easily detected: we count them by counting the attractors defined by them. 

Here we consider the general case where $I_i$ can be any of the intervals $(x_{i-1},x_i)$, $[x_{i-1},x_i)$, $(x_{i-1},x_i]$, $[x_{i-1},x_i]$.  We also allow the retriction $f\vert_{I_i}$ to be decreasing for some $i\in\{1,\ldots,n\}$. The general case comes out to be much more difficult to deal with because a new phenomenon appears: the presence of degenerate periodic orbits which attract no other point beyond those in themselves, thus their basins of attraction have empty interior. Since such orbits cannot be detected through their basins of attraction, our approach is to show that each such orbit rules out an attractive periodic orbit. That is achieved through a combinatorial lemma (Lemma \ref{comb}). Counting attractive periodic orbits in the general case is not so easy as counting them in the piecewise increasing case: such result is only provided in Section 4, by means of \mbox{Theorem \ref{weaker}}.

Many mathematical models of flow control systems have their time evolution given by piecewise contractions. An important class are the ``switched flow models", which describe scheduling of many manufacturing systems, where a large amount of work is processed at a unit time (see Tian and Yu \cite {TY}). In this respect, Chase, Serrano and Ramadge \cite{CSR} considered an example of a  switched server system   whose long-term behavior is periodic. The hybrid systems  introduced by Ramadge \cite{R} (see also \cite{CSR})  to model chemical manufacturing systems motivated Schurmann and Hoffmann \cite{SH}  to consider a class of dynamical systems which they called {\it strange billiards}. The name comes from the fact that the system behaves partially as a standard billiard (see Sinai \cite{S}). 

More generally, let  $\Omega\subset\R^n$ be a compact convex region whose boundary $\partial \Omega$ is a topological \mbox{$n$-sphere}. Let $\mathcal{V}$ be an inward-pointing vector field defined on $\partial\Omega$. Assume that a particle  inside $\Omega$ moves with constant velocity until it reaches the boundary $\partial \Omega$ when the velocity instantaneously changes to that of the vector field  $\mathcal{V}$  at the collision point. The motion of  such particle gives a semi-flow on an appropriate quotient space of the tangent bundle over $\Omega$. This semi-flow is called  {\it strange billiard} or {\it pseudo-billiard}. 

In the applied models considered in \cite{CSR,R,SH}, the compact region $\Omega$ is the 
unit $(d-1)$-simplex
$$
\Delta_{d-1}=\{ (x_1, \ldots , x_{d}) \in \mathbb{R}^{d}: x_1+ \ldots + x_d=1,\,\,\textrm{and}\,\, x_i\ge 0\,\,\textrm{for every}\,\,i\}.
$$
Therefore the boundary $\partial \Delta_{d-1}$ consists of the faces $F_i=\{ (x_1, \ldots , x_d) \in \Delta_{d-1}:  x_i=0\}$, $1\leq i \leq d$. Moreover, in these systems,
 the vector field $\mathcal{V}$ is constant along every face $F_i$. In 
 \cite{SH}, it is studied the metric properties of the Poincar\'e first return map induced by the semi-flow on the faces $\displaystyle \cup_{i=1}^d F_i$, in particular they derive the invariant measure of the map. 

Peters and Parlitz \cite{PP} considered  switched flow systems modeled on another phase space: \mbox{$\{ (x_1,\ldots,x_d) \in \Delta_{d-1}: 0\leq x_i\leq b, \,1\le i  \le d\}$}, where $b>0$ is a parameter given by the system. In the same way, MacPhee, Menshikov, Popov and Volkov \cite{MMPV} studied a switched flow system
whose phase space is an equilateral triangle. In both cases, the Poincar\'e maps are piecewise contractions.

Switched flow systems were also considered by Blank and Bunimovich \cite{BB} who studied general dynamical properties of strange billiards. They study the case where $\Omega$ is a convex polyhedron and the vector field $\mathcal{V}$ is not necessarily constant on each face $F_i$. They call  attention that a similar situation occurs for billiards in a strong magnetic or in the gravitational field, where only the angle with the field matters. They prefer to call these dynamical systems {\it pseudo-billiard}. In physics, pseudo-billiard is the name given to a class of Hamiltonian dynamical systems which was studied earlier by  Eleonsky, Korolev and Kulagin \cite{EKK}.  

Now we describe the class of pseudo-billiards to which Theorem \ref{main} can be applied. 
Let $\Omega\subset\R^2$ be a convex $s$-sided polygon and let its boundary $\partial \Omega$
be endowed with the metric induced by the unit interval $[0,1)$. Let $\mathcal{V}:\partial \Omega\to\R^2$ be a piecewise continuous inward-pointing vector field having $r$ discontinuities. The Poincar\'e first return map $P:\partial\Omega\to\partial\Omega$ induced by the corresponding semi-flow has at most $r$ discontinuities. We may identify $P$ with a piecewise continuous map $f:[0,1)\to [0,1)$ having
$n\le r+1$ discontinuities. Here we assume that $f$ is a piecewise contraction of $n$ intervals.

Bruin and Deane \cite{BD} considered a class of  planar piecewise contractions which they proved to be asymptotically periodic. In their work, they explain that their motivation are eletronic circuits and  argue that the existence of dissipation leads one to consider piecewise contractions.  

Another motivation to study the dynamics of  piecewise contractions of the interval comes from ergodic optimization (e.g. see Jenkinson \cite{J}). Precisely, let $f$ be a piecewise contraction of $n$ intervals and $\phi: [0,1] \rightarrow \mathbb{R}$ be a continuous function.  So we may wonder: what can be said about the possible values of the time averages
$$
\lim_{k\rightarrow \infty} \frac{1}{k} \sum_{i=0}^{k-1} \phi(f^i(x)),
$$
where $x\in [0,1)$? Here we give a partial answer to this question.

Notice that if $f$ is a piecewise contraction of $n$ intervals then $f(x_i)\in \{f(x_i^-),f(x_i^+)\}$ for every $i\in\{1,\ldots,n-1\}$, where
$f(x_i^-)=\lim_{\epsilon\to 0^+} f(x_i-\epsilon)$ and $f(x_i^+)=\lim_{\epsilon\to 0^+} f(x_i+\epsilon)$. 
\mbox{Theorem \ref{main}} states that, no matter how we define $f$ at its jump discontinuities, $f$ has at most $n$ periodic orbits.

Other  worth-mentioning results related to contractive/expansive behavior of first return maps of Cherry flows are  Martens, van Strien, de Melo and Mendes \cite{4autores}, and Mendes \cite{PM}. Within the framework of interval exchange transformations, Nogueira proved that periodic orbits are a typical phenomenon within interval exchanges with flip, which he relates to strange billiards \cite[p. 524]{N}.
Recently Nogueira, Pires and Troubetzkoy \cite[Theorem A, p. 3]{NPT} proved that $n$ is the sharp bound for the number of periodic components of every interval exchange transformation with flip or not having $n$ continuity intervals.

The key steps towards the proof of Theorem \ref{main} are the following. Theorem \ref{sm} describes the geometric structure of stable manifolds of regular periodic orbits of $f$.  Theorem \ref{weaker} provides the optimal upper bound for the number of regular (and thus attractive) periodic orbits \mbox{of $f$}. \mbox{Lemma \ref{xykmod}},  which is obtained using Lemma \ref{comb}, is a stronger version of  Theorem \ref{weaker}. \mbox{Theorem \ref{main}} is an immediate corollary of  Lemma \ref{xykmod}. The proof of Theorem \ref{main2} depends only on Lemma \ref{3itens}. 

\section{Trapping intervals and trapping regions}

Henceforth, let $0=x_0<x_1<\ldots<x_{n-1}<x_n=1$ and let $f:[0,1)\to [0,1)$ be a piecewise contraction of $n$ intervals having discontinuities $x_1,x_2,\ldots,x_{n-1}$
 and continuity intervals $I_1,I_2,\ldots, I_n$.

For a set $G\subset [0,1)$, denote by ${\rm int}{(G)}$ the interior of $G$ and by $\overline{G}$ its closure, with respect to the topology of the line $\R$. The boundary of $G$ is the set $\partial G=\overline{G}\setminus {\rm int}\,{(G)}$. In this way, if $I\subset [0,1)$ is an interval with endpoints at $a<b$ then
  ${\rm int}\,(I)=(a,b)$ and  $\overline{I}=[a,b]$.
We omit double parentheses by setting $f(a,b)=f\big ((a,b)\big)=\{f(x)\mid x\in (a,b)\}$.

Let $f^0$ be the identity map on $[0,1)$ and let
$f^\ell=f\circ f\circ \cdots \circ f$ be the $\ell^{\rm th}$-iterate of $f$. The {\it orbit} of a point $p\in [0,1)$ is the
set $O_f(p)=\{f^\ell(p)\mid \ell\ge 0\}$. The point $p$ is {\it periodic} if there exists a positive integer $k$ such that
$f^k(p)=p$. If $k=\min\, \{\ell\ge 1\mid f^{\ell}(p)=p\}$, then $p$ is called a {\it $k$-periodic point}. An orbit is
{\it periodic} (respectively, $k$-periodic) if its points are periodic (respectively, $k$-periodic).

A periodic point $p$ is called {\it internal} if \mbox{$p\in (0,1)\setminus \{x_1,\ldots,x_{n-1}\}$}, otherwise $p$ is called an {\it external periodic point}. Hence, an external periodic point is either $0$ or a discontinuity  of $f$.   
 A periodic orbit $\gamma=O_f(p)$ is {\it internal} if $\gamma\subset (0,1)\setminus \{x_1,\ldots,x_{n-1}\}$, otherwise $\gamma$ is said to be an {\it external periodic orbit}. In this way, a periodic orbit is internal if it contains only internal periodic points.
 
 Throughout this article, interval means an  interval with non-empty interior.
  
 \begin{definition}[Regular/degenerate periodic point]\label{rdpp} A periodic point $p$ of $f$ is {\it regular} if there exists  an interval $J$ containing $p$ whose iterates $f^\ell(J)$, $\ell\ge 1$, are intervals. 
 A periodic point is {\it degenerate} if it is not regular.
 \end{definition}
 
 \begin{lemma}\label{regreg} A periodic point $p$ of $f$ is regular if and only if every point in its orbit is regular. \end{lemma}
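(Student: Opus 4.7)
The plan is to observe that the ``if'' direction is immediate and that the ``only if'' direction follows by pushing the witness interval for $p$ forward along the orbit.

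For the easy direction, suppose every point in $O_f(p)$ is regular. Since $p\in O_f(p)$, this in particular says $p$ is regular, which is what we want.

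For the non-trivial direction, assume $p$ is regular and fix an interval $J\ni p$ such that $f^\ell(J)$ is an interval for every $\ell\ge 1$. Let $q\in O_f(p)$ be arbitrary; write $q=f^m(p)$ with $m\ge 0$. If $m=0$ there is nothing to prove, so assume $m\ge 1$, and set $J'=f^m(J)$. By the regularity of $p$, the set $J'$ is itself an interval (hence, by the running convention that intervals have non-empty interior, $J'$ is a genuine interval); and $q=f^m(p)\in f^m(J)=J'$. For any $\ell\ge 1$ we then have
\[
 f^\ell(J')=f^\ell\!\left(f^m(J)\right)=f^{\ell+m}(J),
\]
which is an interval by the regularity of $p$ (since $\ell+m\ge 1$). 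Thus $J'$ witnesses the regularity of $q$.

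The argument is essentially a one-line pushforward, so I do not anticipate a real obstacle. The only thing to be a little careful about is the convention that ``interval'' means ``interval with non-empty interior'': one must confirm that $J'=f^m(J)$ is not a single point. This is automatic because $J'$ is assumed (by regularity of $p$) to be an interval in the paper's sense, so its interior is non-empty and it can legitimately serve as the witness interval for $q$. The identity $f^\ell(J')=f^{\ell+m}(J)$ valid for $\ell\ge 1$ is then what transports all the required properties from $p$ to $q$, completing the equivalence.
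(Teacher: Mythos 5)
Your proof is correct and follows essentially the same route as the paper: both push the witness interval $J$ forward along the orbit, taking $J'=f^m(J)$ as the witness for $q=f^m(p)$ and noting that $f^\ell(J')=f^{\ell+m}(J)$ is an interval for all $\ell\ge 1$. Your added remark that $J'$ has non-empty interior (so it qualifies as an interval under the paper's convention) is a correct and harmless refinement of the same argument.
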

 \begin{proof} Let $p$ be a regular $k$-periodic point. By Definition \ref{rdpp}, there exists an interval $J$ such that
  for every $i\in \{0,\ldots,k-1\}$, the $k$-periodic point $f^i(p)$ is contained in the interval $f^i(J)$. Moreover, $f^{\ell}(f^i(J))$ is an interval for every $\ell\ge 0$. Thus $f^i(p)$ is also regular.
  \end{proof}
 
 By Lemma \ref{regreg}, it makes sense to define regular periodic orbit.
 
 \begin{definition}[Regular/degenerate periodic orbit] An orbit $\gamma=O_f(p)$ is {\it regular} if $p$ is a regular periodic point, otherwise $\gamma$ is said to be ${\it degenerate}$.
 \end{definition}
 
 \begin{proposition}\label{intreg} Every periodic orbit of $f$ that contains no discontinuity is regular.  \end{proposition}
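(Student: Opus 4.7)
The plan is to exploit the contractive property of $f$ on each continuity interval together with the fact that a periodic orbit avoiding the discontinuity set $\{x_1, \ldots, x_{n-1}\}$ sits at positive distance from that set. Let $k$ denote the period of $p$, write $q_i := f^i(p)$ for $i = 0, 1, \ldots, k-1$, and let $I_{j_i}$ denote the unique continuity interval containing $q_i$. Since $\gamma = \{q_0, \ldots, q_{k-1}\}$ is disjoint from the finite set $\{x_1, \ldots, x_{n-1}\}$, the quantity
\[
r := \tfrac{1}{2} \min_{0 \le i \le k-1} \, \dist\bigl(q_i, \{x_1, \ldots, x_{n-1}\}\bigr)
\]
is strictly positive, with the convention that the distance to the empty set is $+\infty$ (covering the case $n = 1$). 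For each $i$, the set $U_i := (q_i - r, q_i + r) \cap [0,1)$ is an interval that contains no discontinuity, and hence lies inside the single connected component of $[0,1) \setminus \{x_1, \ldots, x_{n-1}\}$ that contains $q_i$; in particular $U_i \subset I_{j_i}$.

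Next, I would choose any subinterval $J \subset U_0$ containing $p$ with $\diam J < r$, and prove by induction on $\ell \ge 0$ that $f^\ell(J)$ is an interval contained in $U_{\ell \bmod k}$. The case $\ell = 0$ is immediate. For the inductive step, if $f^\ell(J) \subset U_{\ell \bmod k} \subset I_{j_{\ell \bmod k}}$, then the restriction of $f$ to $I_{j_{\ell \bmod k}}$ is continuous and $\kappa$-Lipschitz, so $f^{\ell+1}(J) = f(f^\ell(J))$ is again an interval. Moreover it contains $q_{(\ell+1) \bmod k} = f(q_{\ell \bmod k})$ and has diameter at most $\kappa^{\ell+1}\diam J < r$, so it lies in $U_{(\ell+1) \bmod k}$. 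This completes the induction, showing that every iterate $f^\ell(J)$ is an interval, and hence $p$ is a regular periodic point in the sense of Definition \ref{rdpp}.

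The main step requiring careful bookkeeping is to ensure that the neighborhoods $U_i$ remain well-defined intervals when the orbit passes through the left endpoint $0$ of $[0,1)$: in that case $U_i$ is the half-open interval $[0, r)$ rather than a symmetric open interval. Since $0 \in I_1$ and $0$ is not a discontinuity, $f$ remains continuous and $\kappa$-Lipschitz on $U_i \subset I_1$, so the induction above goes through unchanged. Apart from this minor care, the argument is a routine consequence of the contraction hypothesis.
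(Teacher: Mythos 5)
Your proof is correct and follows essentially the same strategy as the paper: take a small interval around $p$ and use the contraction to keep every iterate inside a single continuity interval, hence an interval. The only cosmetic difference is that the paper first establishes $f^k(J)\subset J$ and then reduces $\ell$ modulo $k$, whereas you run a direct induction on $\ell$ using the diameter bound $\kappa^{\ell}\diam J<r$ together with the fact that $f^{\ell}(J)$ always contains the orbit point $q_{\ell\bmod k}$; both handle the external case $0\in\gamma$ by the same truncation at the endpoint.
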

 \begin{proof} Let $\gamma=O_f(p)$ be a $k$-periodic orbit of $f$ containing no discontinuity. Firstly suppose that $\gamma$ is internal, thus $\gamma$ is contained in the interior of the set $A=[0,1)\setminus\bigcup_{\ell=0}^{k-1}f^{-\ell}\big(\{x_1,\ldots,x_{n-1}\}\big)$. Let $\epsilon>0$ be so small that  \mbox{$J:=[p-\epsilon,p+\epsilon]$} is contained \mbox{in $A$}. Thus, for every \mbox{$\ell\in\{0,\ldots,k-1\}$},  there exists $i(\ell)\in\{1,\ldots,n\}$ such that
$f^\ell(J)$ is contained in the continuity interval $I_{i(\ell)}$. Consequently,
 the 
first $k$ iterates
 $f(J)$, \ldots, $f^k(J)$ of $J$ are intervals. Moreover, $f^k(J)$ is an interval centered at $p$ of ratio less
than $\kappa^{k}\epsilon$, where $\kappa\in (0,1)$ is the Lipschitz constant of $f$. Thus, $f^k{(J)}\subset J$.  Therefore,
$$f^{\ell}(J)\subset f^{\ell \,{\rm mod}\, k}(J)\subset I_{i(\ell \,{\rm mod}\, k)} \,\,\textrm{for every}\,\, 
\ell\ge 0.$$ In this way,
$f^{\ell}(J)$ is an interval for every $\ell\ge 0$. Now suppose that $\gamma$ is  external, thus $\gamma=O_f(0)$ and $\gamma\cap \{x_1,\ldots,x_{n-1}\}=\emptyset$. Therefore, there exists $\epsilon>0$ such that $J:=[0,\epsilon]$ is contained in $A$. By proceeding as above, we obtain that $f^{\ell}(J)$ is an interval for every $\ell\ge 0$, thus $\gamma$ is regular. 
 \end{proof}
 
 Besides the internal periodic orbits, there exist external periodic orbits that are regular.
 We will prove later that regular periodic orbits are attractive (and so have basin of attraction with non-empty interior) whereas degenerate periodic orbits may have
 the basin of attraction reduced to the periodic orbit itself.

\begin{definition}[Trapping interval]\label{ti} We say that an interval $J$ containing a $k$-periodic \mbox{point $p$} is a {\it trapping interval} of $p$ if its iterates $f(J),\ldots,f^k(J)$ are intervals and $f^k(J)\subset J$. \end{definition}

Next we prove the existence of  a trapping interval which contains every  trapping interval \mbox{of $p$}.

\begin{lemma}\label{existencemaximal} Let $\{J_{\lambda}:\lambda\in\Lambda\}$ be the family of all trapping intervals of the $k$-periodic point $p$, then $\bigcup_{\lambda\in\Lambda} J_{\lambda}$ is  a trapping interval of $p$.
\end{lemma}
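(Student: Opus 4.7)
The plan is to verify directly the three conditions of Definition~\ref{ti} for $J := \bigcup_{\lambda \in \Lambda} J_\lambda$. First I would observe that $J$ is an interval containing $p$: each $J_\lambda$ is an interval with $p \in J_\lambda$, so the family $\{J_\lambda\}_{\lambda\in\Lambda}$ has the point $p$ in common, and the union of a family of intervals with nonempty common intersection is an interval. Moreover $J \subset [0,1)$ since each $J_\lambda$ is.

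Next I would show that $f^i(J)$ is an interval for every $i \in \{1,\ldots,k\}$. The key identity is the elementary set-theoretic fact
\[
f^i(J) \;=\; f^i\!\Big(\bigcup_{\lambda\in\Lambda} J_\lambda\Big) \;=\; \bigcup_{\lambda\in\Lambda} f^i(J_\lambda).
\]
By hypothesis each $f^i(J_\lambda)$ is an interval, and each contains $f^i(p)$ (since $p \in J_\lambda$). Thus $\{f^i(J_\lambda)\}_{\lambda \in \Lambda}$ is again a family of intervals with the common point $f^i(p)$, so its union $f^i(J)$ is an interval.

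Finally, the trapping inclusion is immediate: for each $\lambda$ we have $f^k(J_\lambda) \subset J_\lambda$, hence
\[
f^k(J) \;=\; \bigcup_{\lambda\in\Lambda} f^k(J_\lambda) \;\subset\; \bigcup_{\lambda\in\Lambda} J_\lambda \;=\; J.
\]
Combined with the two previous points, this shows $J$ satisfies Definition~\ref{ti} and is therefore a trapping interval of $p$. It is by construction the largest such interval, since it contains every element of the family. There is no real obstacle here beyond being careful that the union of the intervals is itself an interval, which is why I emphasized the common-point argument at each iterate.
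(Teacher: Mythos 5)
Your proof is correct and follows essentially the same route as the paper's: write $f^{\ell}\bigl(\bigcup_{\lambda}J_{\lambda}\bigr)=\bigcup_{\lambda}f^{\ell}(J_{\lambda})$, note each term is an interval containing $f^{\ell}(p)$ so the union is an interval, and conclude $f^{k}(J)\subset J$ from $f^{k}(J_{\lambda})\subset J_{\lambda}$. The only difference is that you spell out the common-point argument for connectedness, which the paper leaves implicit.
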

\begin{proof} By Definition \ref{ti}, $p\in \bigcup_{\lambda\in\Lambda} J_{\lambda}$ and
$$
f^{\ell}\Big(\bigcup_{\lambda\in\Lambda} J_{\lambda}\Big)=\bigcup_{\lambda\in\Lambda} f^{\ell}\big(J_{\lambda}\big)
$$
is an interval containing $f^{\ell}(p)$ for all $0\le \ell\le k$. Moreover,
$$
f^{k}\Big(\bigcup_{\lambda\in\Lambda} J_{\lambda}\Big)=\bigcup_{\lambda\in\Lambda} f^{k}\big(J_{\lambda}\big)\subset \bigcup_{\lambda\in\Lambda} J_\lambda.
$$
\end{proof}


\begin{lemma}[Existence of trapping intervals]\label{eti} If $p$ is a regular periodic point of $f$ then $p$ admits a maximal trapping interval $J_p$.   
\end{lemma}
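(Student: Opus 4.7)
The plan is to apply Lemma \ref{existencemaximal}, which reduces the task to exhibiting a single trapping interval of $p$: since the union of all trapping intervals of $p$ is itself a trapping interval, it is automatically the maximal one $J_p$, provided at least one exists.

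Let $k$ be the period of $p$. By Definition \ref{rdpp}, since $p$ is regular there is an interval $J_0$ containing $p$ such that $f^\ell(J_0)$ is an interval for every $\ell \ge 0$. I first claim that $|f^{\ell+1}(J_0)| \le \kappa\,|f^\ell(J_0)|$, hence $|f^\ell(J_0)| \le \kappa^\ell |J_0|$. Indeed, one writes the interval $f^\ell(J_0)$ as the essentially disjoint union of its intersections with the continuity intervals $I_1,\ldots,I_n$; since $f|_{I_i}$ is $\kappa$-Lipschitz each piece's length shrinks by a factor of $\kappa$, while the injectivity of $f$ ensures that the images of distinct pieces are disjoint, so the lengths sum to give the claim.

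Set $J_j := f^{jk}(J_0)$ for $j \ge 0$. Each $J_j$ is an interval containing $p = f^{jk}(p)$, with $|J_j| \le \kappa^{jk}|J_0|$. Define $J = \bigcup_{j\ge 0} J_j$. Since the $J_j$ are intervals all sharing the point $p$, their union is connected, so $J$ is an interval. For each $\ell \ge 0$,
$$
f^\ell(J) \;=\; \bigcup_{j\ge 0} f^\ell(J_j) \;=\; \bigcup_{j\ge 0} f^{\ell+jk}(J_0)
$$
is a union of intervals all containing $f^\ell(p)$, hence is itself an interval. In particular $f^k(J) = \bigcup_{j\ge 1} J_j \subset J$, so $J$ is a trapping interval of $p$ in the sense of Definition \ref{ti}. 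Lemma \ref{existencemaximal} now yields the maximal trapping interval $J_p$.

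The only delicate point is the length contraction $|f^{\ell+1}(J_0)| \le \kappa\,|f^\ell(J_0)|$; it crucially invokes the global injectivity of $f$, without which the $f$-images of distinct continuity pieces inside $f^\ell(J_0)$ could overlap and the piecewise $\kappa$-Lipschitz bounds could not be summed into a bound on the total length.
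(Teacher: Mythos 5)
Your proof is correct and follows essentially the same route as the paper: take the interval $K$ from Definition \ref{rdpp}, form $J=\bigcup_{\ell\ge 0}f^{\ell k}(K)$, observe that all its iterates are intervals (unions of intervals sharing the common point $f^m(p)$) with $f^k(J)\subset J$, and invoke Lemma \ref{existencemaximal}. The length-contraction estimate $\vert f^{\ell+1}(J_0)\vert\le\kappa\vert f^{\ell}(J_0)\vert$ is correct but superfluous here, since connectedness of the union already follows from the shared periodic point.
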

\begin{proof} Let $p$ be a regular $k$-periodic point of $f$. By Definition \ref{rdpp}, there exists an interval $K$ containing $p$ such that the iterates $f^{\ell }(K)$, $\ell=0,1,2,\ldots$ are intervals. Let $J=\bigcup_{\ell\ge 0} f^{\ell k}(K)$, thus $f^{m}(J)=\bigcup_{\ell\ge 0} f^{m+\ell k}(K)$ is an interval for all $m\ge 1$. Moreover, $f^k(J)=\bigcup_{\ell\ge 1}f^{\ell k}(K)\subset J$. This proves that $J$ is a trapping interval of $p$. 
The existence of the maximal trapping interval follows now from Lemma \ref{existencemaximal}.
  \end{proof}
  
  \begin{definition} We denote by $J_p$ the maximal trapping interval of a regular periodic \mbox{point $p$}.
\end{definition}
      
  \begin{definition}[Maximal trapping region]\label{maxtr}
Let $\gamma$ be a regular periodic orbit. We call the set $\Omega(\gamma)=\bigcup_{p\in\gamma}J_{p}$ the {\it maximal trapping region} of $\gamma$.
\end{definition}

\begin{proposition}[Trapping region structure]\label{trs} Let $\gamma$ be a regular periodic orbit, then its maximal trapping region $\Omega(\gamma)$ has the following properties:
\begin{itemize}
\item [(TR1)] $f(\Omega(\gamma))\subset\Omega(\gamma)$;
\item [(TR2)] $\gamma=\bigcap_{\ell=0}^\infty f^\ell\big( \Omega(\gamma)\big)$;
\item [(TR3)] $\Omega(\gamma)$ is the union of  $k$ disjoint intervals,
where $k$ is the period of $\gamma$.
\end{itemize}
\end{proposition}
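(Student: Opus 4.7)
The plan is to prove the three items in order, with (TR1) feeding into (TR2), and a separate union-and-maximality argument for (TR3). Throughout I use the following technical observation: since $f$ is injective and $f(J_p)$ is required to be an interval, the interior of $J_p$ (and of each iterate $f^\ell(J_p)$) cannot contain any jump discontinuity of $f$, because otherwise the image would split into two disjoint intervals. Consequently $f^k$ acts on $J_p$ as a $\kappa^k$-Lipschitz contraction, and iterating gives $\mathrm{diam}(f^{mk}(J_p))\le\kappa^{mk}\,\mathrm{diam}(J_p)\to 0$.

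For (TR1), I will show that $f(J_p)$ is a trapping interval of $f(p)$ and invoke maximality of $J_{f(p)}$. Indeed, $f(J_p)=f^{1}(J_p)$ is an interval containing $f(p)$; the iterates $f^\ell(f(J_p))=f^{\ell+1}(J_p)$ for $1\le \ell\le k-1$ are intervals by the trapping property of $J_p$; the remaining iterate $f^k(f(J_p))=f(f^k(J_p))$ is an interval because $f^k(J_p)\subset J_p$ is a sub-interval of $J_p$ on which $f$ acts monotonically; and $f^k(f(J_p))=f(f^k(J_p))\subset f(J_p)$ since $f^k(J_p)\subset J_p$. Maximality yields $f(J_p)\subset J_{f(p)}$, and summing over $p\in\gamma$ gives $f(\Omega(\gamma))\subset\Omega(\gamma)$.

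For (TR2), (TR1) makes $\{f^\ell(\Omega(\gamma))\}$ a decreasing sequence, so its intersection equals $\bigcap_{m\ge 0}f^{mk}(\Omega(\gamma))=\bigcap_m\bigcup_{p\in\gamma}f^{mk}(J_p)$. For each fixed $p\in\gamma$ the sequence $\{f^{mk}(J_p)\}$ is decreasing (since $A\subset B$ implies $f^k(A)\subset f^k(B)$), every term contains $p$, and the diameters tend to zero, so $\bigcap_m f^{mk}(J_p)=\{p\}$. A pigeonhole argument over the finite set $\gamma$ (any point in the intersection must lie in $f^{mk}(J_p)$ for infinitely many $m$ for some fixed $p$) yields $\bigcap_m\bigcup_{p\in\gamma}f^{mk}(J_p)\subset\gamma$; the reverse inclusion is immediate from $f^\ell(\gamma)=\gamma\subset\Omega(\gamma)$.

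For (TR3)—which I expect to be the main obstacle—I must show the intervals $J_{p_0},\ldots,J_{p_{k-1}}$ are pairwise disjoint. Suppose for contradiction $z\in J_{p_i}\cap J_{p_j}$ with $i\ne j$. Then $K:=J_{p_i}\cup J_{p_j}$ is an interval, and for each $1\le\ell\le k$ the set $f^\ell(K)=f^\ell(J_{p_i})\cup f^\ell(J_{p_j})$ is a union of two intervals sharing the common point $f^\ell(z)$, hence itself an interval; moreover $f^k(K)\subset K$. Thus $K$ is a trapping interval of both $p_i$ and $p_j$, and Lemma \ref{existencemaximal} forces $K\subset J_{p_i}$ and $K\subset J_{p_j}$, so $J_{p_i}=J_{p_j}=:J$. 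But the common interval $J$ now contains both distinct points $p_i,p_j$, yet $\mathrm{diam}(f^{mk}(J))\to 0$ while $p_i,p_j\in f^{mk}(J)$ for every $m$—contradiction. The delicate point is simultaneously interpreting $K$ as a trapping interval for two different periodic points, which is what bootstraps into the maximality collision.
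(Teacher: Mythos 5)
Your arguments for (TR1) and (TR2) are essentially the paper's: it too shows that $f(J_p)$ is a trapping interval of $f(p)$ and invokes maximality to get $f(J_p)\subset J_{f(p)}$, and for (TR2) it records only the length estimate $\vert f^{\ell k}(J_p)\vert\le\kappa^{\ell k}\vert J_p\vert$ giving $\bigcap_{\ell\ge 0}f^{\ell k}(J_p)=\{p\}$, leaving implicit the monotonicity of $\ell\mapsto f^{\ell}(\Omega(\gamma))$ and the pigeonhole step over the finite orbit that you spell out. Where you genuinely depart from the paper is (TR3): the paper asserts it ``follows straightforwardly from the Definition'' and gives no argument, whereas you correctly observe that pairwise disjointness of the $J_p$ must actually be proved, and you supply a valid argument --- an overlap makes $J_{p_i}\cup J_{p_j}$ a trapping interval of both points, Lemma \ref{existencemaximal} then collapses $J_{p_i}=J_{p_j}$, and the shrinking diameters of $f^{mk}(J_{p_i})$ contradict $p_i\neq p_j$. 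That is a real gain in completeness over the published proof. One shared caveat: your opening ``technical observation'' (that ${\rm int}\,(J_p)$ meets no discontinuity because otherwise $f(J_p)$ would split) is not quite airtight, since the one-sided limits of $f$ at a discontinuity need not both be attained and an interval straddling $x_i$ can, in degenerate configurations, still map onto an interval; you lean on it to say $f$ acts monotonically on $J_p$ when checking that $f(f^k(J_p))$ is an interval, and to get the $\kappa^k$-Lipschitz bound. The Lipschitz conclusion survives without it (sum the lengths of the at most two monotone pieces to get $\vert f^{\ell+1}(J)\vert\le\kappa\vert f^{\ell}(J)\vert$, which is exactly the estimate the paper uses), and on the interval-image point you are no worse off than the paper, which simply asserts that $f^{\ell}(f(J_p))$ is an interval for all $\ell\ge 0$ with no justification.
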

\begin{proof} We have that $f^{\ell}(f(J_p))$ is an interval for all $\ell\ge 0$. Moreover,
$$f^k(f(J_p))= f(f^k(J_p))\subset f(J_p),$$
thus $f(J_p)$ is a trapping interval of $f(p)$, so $f(J_p)\subset J_{f(p)}$. Therefore,
$$
f(\Omega(\gamma))=f\Big(\bigcup_{p\in\gamma} J_{p} \Big)=\bigcup_{p\in\gamma}f\big(J_{p}\big)\subset \Omega(\gamma),
$$
which proves (TR1).

Let $p\in\gamma$, thus $p\in \bigcap_{\ell\ge 0} T^{\ell k}(J_p)$ and
$$
\vert T^{\ell k}(J_p)\vert\le \kappa^{\ell k}\vert J_p\vert,
$$
where $\vert \cdot\vert$ stands for the length of the interval.
Hence, $\bigcap_{\ell\ge 0}T^{\ell k}(J_p)=\{p\}$,
which proves (TR2).

The item (TR3) follows straightforwardly from the Definition \ref{maxtr}. 

\end{proof}


   
 \begin{figure}[htbp]\label{fgh}
\includegraphics[width=16cm]{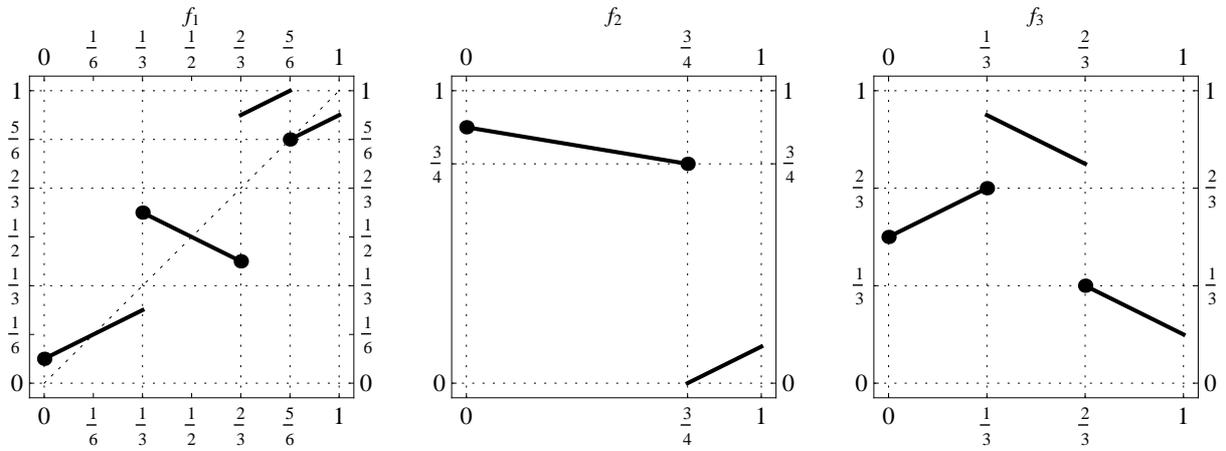}
\caption{Distinct types of periodic points}
\label{default}
\end{figure}

\noindent {\bf Example 1.}
Figure 1 shows the graphs of three piecewise contractions $f_1$, $f_2$ and $f_3$. The points $p_1=\frac16$, $p_2=\frac12$ and $p_3=\frac56$ are regular periodic points of $f_1$. Their maximal trapping intervals are, respectively, $J_{p_1}=[0,1/3)$, $J_{p_2}=[1/3,2/3]$ and
$J_{p_3}=[5/6,1)$. The existence of such trapping intervals are ensured by Lemma \ref{eti}. 

The map $f_2$ shows that the claim of Lemma \ref{eti} is false for the degenerate periodic point $p_4=\frac34$. More precisely, the point $p_4$ is a degenerate external periodic point of $f_2$ that attracts no other point (there is another periodic point that attracts all points of $[0,1)\setminus\{p_4\}$). 

The point $p_5=1/3$ is an external $2$-periodic point of $f_3$ that is also degenerate.\\

\begin{remark}\label{rrr}
The following example shows that it may happen that $\overline{\Omega(\gamma)}\cap\{x_0,\ldots,x_n\}$ is a one-point-set for some regular periodic orbit $\gamma$.
\end{remark}

\begin{figure}[htbp]\label{f9}
\begin{center}
\includegraphics[width=5.5cm]{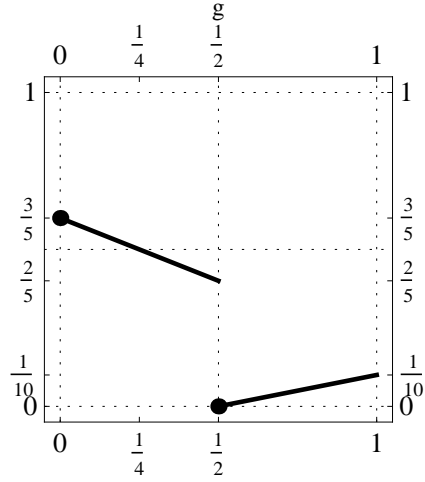}
\caption{Boundary of trapping regions}
\label{default}
\end{center}
\end{figure}

\noindent {\bf Example 2.}
Figure 2 shows the graphs of a 2-interval piecewise contraction $g:[0,1)\to [0,1)$ defined by $g(x)=-0.4x+0.6$ if $x\in [0,0.5)$, otherwise $g(x)=0.2\,(x-0.5)$. The point $p_1=\frac{3}{7}$ is a 1-periodic point of $g$ whereas $p_2=\frac{16}{27}$ is a $2$-periodic point of $g$. Moreover, $J_{p_1}=(\frac{1}{4},\frac{1}{2})$ is the maximal trapping interval of $p_1$ and $J_{p_2}=[\frac{1}{2},1)$ is the maximal trapping interval of $p_2$. For $\gamma=O_g(p_1)$ we have that
$\overline{\Omega(\gamma)}=\overline{J_{p_1}}=[\frac{2}{5},\frac{1}{2}]$. Thus
$\overline{\Omega(\gamma)}\cap \{x_0,x_1,x_2\}=\{x_1\}$, where $x_0=0$,
$x_1=\frac{1}{2}$ and $x_2=1$.\\

\begin{lemma}\label{disj} If $\gamma_1$ and $\gamma_2$ are two distinct regular  periodic orbits of $f$ then
$\Omega(\gamma_1)\cap \Omega(\gamma_2)=\emptyset$.
\end{lemma}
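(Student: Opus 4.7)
The plan is to argue by contradiction: suppose there exists a point $q\in \Omega(\gamma_1)\cap\Omega(\gamma_2)$. By Definition \ref{maxtr}, this means there are periodic points $p_1\in\gamma_1$ and $p_2\in\gamma_2$ with $q\in J_{p_1}\cap J_{p_2}$. Denote by $k_i$ the period of $p_i$. I want to show that the orbit of $q$ converges to $p_1$ along the subsequence $\{n k_1\}$ and to $p_2$ along the subsequence $\{n k_2\}$, and then extract a common sub-subsequence forcing $p_1=p_2$.

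The first key step is to establish $f^{n k_i}(q)\to p_i$ as $n\to\infty$, for $i=1,2$. Because $J_{p_i}$ is a trapping interval of $p_i$, we have $f^{k_i}(J_{p_i})\subset J_{p_i}$, and by iteration $f^{nk_i}(J_{p_i})\subset J_{p_i}$ for all $n\ge 0$. Moreover, each iterate $f^{j}(J_{p_i})$ is an interval, so it lies in a single continuity interval and $f$ acts on it as a $\kappa$-Lipschitz map; the same length-contraction estimate used in the proof of (TR2) of Proposition \ref{trs} yields
\[
|f^{nk_i}(J_{p_i})|\le \kappa^{nk_i}|J_{p_i}|\xrightarrow[n\to\infty]{}0.
\]
Since both $q$ and $p_i=f^{nk_i}(p_i)$ belong to $f^{nk_i}(J_{p_i})$ (for the former, using $q\in J_{p_i}$), we obtain $|f^{nk_i}(q)-p_i|\le \kappa^{nk_i}|J_{p_i}|\to 0$.

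The second step is the common-multiple trick. The sequence $\ell_n=n k_1 k_2$ is simultaneously a subsequence of $\{mk_1\}$ and $\{mk_2\}$, so $f^{\ell_n}(q)\to p_1$ and $f^{\ell_n}(q)\to p_2$. By uniqueness of limits, $p_1=p_2$. But then $\gamma_1=O_f(p_1)=O_f(p_2)=\gamma_2$, since two periodic orbits sharing a point coincide, contradicting the assumption that $\gamma_1$ and $\gamma_2$ are distinct.

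The argument is essentially short once the setup of trapping intervals is in place; the only mildly delicate point is to verify the $\kappa^{nk_i}$ length contraction, but this is already handled in the proof of Proposition \ref{trs} and can be cited directly. No combinatorial or topological machinery beyond the basic properties of maximal trapping intervals is needed.
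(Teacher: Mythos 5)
Your argument is correct and is essentially the paper's: the paper disposes of this lemma by citing Proposition \ref{trs}, whose proof of (TR2) contains exactly the contraction estimate $\vert f^{\ell k}(J_p)\vert\le\kappa^{\ell k}\vert J_p\vert$ that you use to force $f^{nk_1k_2}(q)$ to converge to both $p_1$ and $p_2$. One cosmetic remark: your parenthetical claim that each iterate $f^j(J_{p_i})$ ``lies in a single continuity interval'' is not needed and not obviously true; the length bound holds anyway because $f$ is $\kappa$-Lipschitz on each piece and the image is an interval, so its length equals its Lebesgue measure.
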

\begin{proof} It follows easily from Proposition \ref{trs}.
\end{proof}
  
  \section{Stable manifolds of periodic orbits}
  
   The {\it stable manifold} (also called the {\it basin of attraction}) of a periodic orbit $\gamma$ of $f$ is the set
  $$
  W^s(\gamma)=\left\{x\in [0,1) \mid \omega(x)=\gamma \right\},
 \quad \textrm{where}\quad
  \omega(x)=\bigcap_{m\in\N}\overline{\{f^{\ell}(x)\mid \ell\ge m\}}.
  $$

  The following lemmas are immediate.
  
  \begin{lemma}\label{fww}  Let $\gamma$ be a periodic orbit, then
  $f(W^s(\gamma))\subset W^s(\gamma)$.
  \end{lemma}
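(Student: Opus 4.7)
The plan is to unwind the definitions of $W^s(\gamma)$ and $\omega$ and show that $\omega(f(x)) = \omega(x)$ for every $x \in [0,1)$; the inclusion will then be immediate because $x \in W^s(\gamma)$ means $\omega(x)=\gamma$, so $\omega(f(x))=\gamma$ too, which is exactly $f(x) \in W^s(\gamma)$.

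First I would fix an arbitrary $x \in W^s(\gamma)$ and write
\[
\omega(f(x)) \;=\; \bigcap_{m\in\N}\overline{\{f^{\ell}(f(x))\mid \ell\ge m\}} \;=\; \bigcap_{m\in\N}\overline{\{f^{\ell+1}(x)\mid \ell\ge m\}} \;=\; \bigcap_{m\in\N}\overline{\{f^{\ell}(x)\mid \ell\ge m+1\}}.
\]
Re-indexing $m+1 \mapsto m'$, the last intersection is the same as $\bigcap_{m'\ge 1}\overline{\{f^{\ell}(x)\mid \ell\ge m'\}}$. The extra term $m'=0$ (corresponding to the full orbit closure) only enlarges the set being intersected, so dropping it does not change the intersection; therefore this equals $\bigcap_{m\in\N}\overline{\{f^{\ell}(x)\mid \ell\ge m\}} = \omega(x)$.

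Combining, $\omega(f(x)) = \omega(x) = \gamma$, so $f(x) \in W^s(\gamma)$. Since $x \in W^s(\gamma)$ was arbitrary, $f(W^s(\gamma)) \subset W^s(\gamma)$. There is no real obstacle here: the only subtlety is the re-indexing step showing that shifting the tail starting index by one leaves the $\omega$-limit set unchanged, which is a general fact about $\omega$-limit sets of any self-map and does not use the piecewise-contraction hypothesis.
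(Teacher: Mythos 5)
Your proof is correct and follows essentially the same route as the paper: both compute $\omega(f(x))$ as the intersection of shifted tail closures and observe it coincides with $\omega(x)=\gamma$. The re-indexing remark you add (that dropping the $m'=0$ term is harmless because the tail closures are nested) is a fair point of care, but the argument is the paper's argument.
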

  \begin{proof} Let $x\in W^s(\gamma)$. Then $\omega(x)=\gamma$, that is,
  ${\bigcap_{m\in\N} \overline{\{f^{\ell}(x)\mid \ell\ge m\}}}=\gamma$. Hence,
  $$\omega(f(x))={\bigcap_{m\in\N}\overline{ \{f^{\ell+1}(x))\mid \ell\ge m\}}}={\bigcap_{m\in\N} \overline{\{f^{\ell}(x)\mid \ell\ge m\}}}=\gamma,$$
  and so $f(x)\in W^s(\gamma)$.
  \end{proof}
  
  \begin{lemma}\label{ime} If $\gamma_1$ and $\gamma_2$ are two distinct regular periodic orbits of $f$ then
  $W^s(\gamma_1)\cap W^s(\gamma_2)=\emptyset$.
  \end{lemma}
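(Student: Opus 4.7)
The plan is to observe that Lemma \ref{ime} is a direct consequence of how the stable manifold is defined, with no real analysis required. Unwinding the definition, $W^s(\gamma)=\{x\in[0,1):\omega(x)=\gamma\}$, and for each $x\in[0,1)$ the $\omega$-limit set
\[
\omega(x)=\bigcap_{m\in\N}\overline{\{f^{\ell}(x)\mid \ell\ge m\}}
\]
is a uniquely determined subset of $[0,1]$. Consequently, each point $x$ belongs to the stable manifold of at most one periodic orbit: if $x\in W^s(\gamma_1)\cap W^s(\gamma_2)$, then the definition of $W^s$ gives $\gamma_1=\omega(x)$ and $\gamma_2=\omega(x)$, whence $\gamma_1=\gamma_2$, contradicting the hypothesis that the two orbits are distinct.

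Thus the proof reduces to a single line: assume for contradiction that some $x$ lies in $W^s(\gamma_1)\cap W^s(\gamma_2)$, apply the definition of $W^s$ twice to conclude $\gamma_1=\omega(x)=\gamma_2$, and obtain the desired contradiction. Notice that regularity of $\gamma_1$ and $\gamma_2$ plays no role in the argument; the statement holds for arbitrary distinct periodic orbits. The hypothesis is presumably included only because the lemma will be invoked later in the context of regular orbits (where maximal trapping regions from Proposition \ref{trs} can be coupled with stable manifolds). There is no genuine obstacle; the only thing to be mildly careful about is that $\omega(x)$ is a well-defined set-valued object (which is immediate from the intersection formula), so the equalities $\omega(x)=\gamma_1$ and $\omega(x)=\gamma_2$ can indeed be chained to force $\gamma_1=\gamma_2$.
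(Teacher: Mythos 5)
Your proof is correct and is exactly the argument the paper intends: the paper states this lemma without proof under the heading ``the following lemmas are immediate,'' and the immediate reason is precisely that $\omega(x)$ is a single well-defined set, so it cannot equal two distinct orbits. Your side remark that regularity is not needed is also accurate.
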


The stable manifold of a regular periodic orbit $\gamma$ contains the trapping region of $\gamma$, that is, $\Omega(\gamma)\subset W^s(\gamma)$. The stable manifold of a periodic orbit may also include finite sets or intervals that are attracted by the trapping region. \\

\begin{figure}[htbp]\label{f9}
\begin{center}
\includegraphics[width=11.5cm]{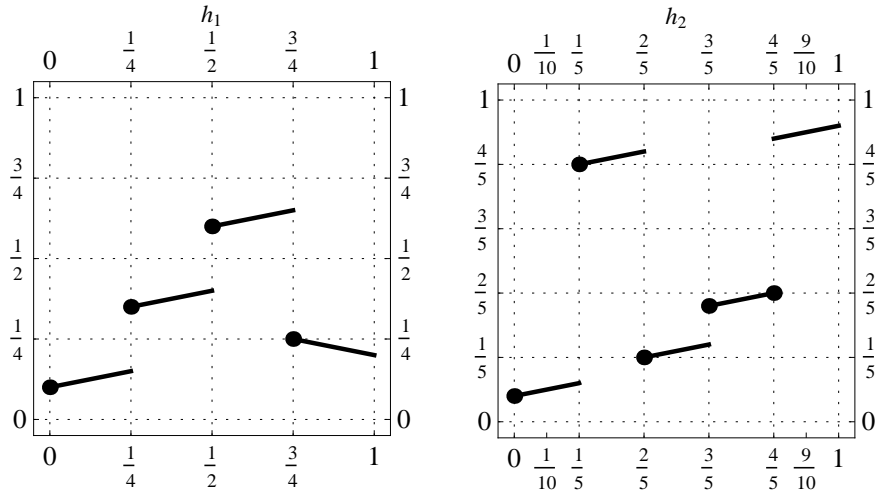}
\caption{Stable manifolds of periodic orbits}
\label{default}
\end{center}
\end{figure}
    
  \noindent {\bf Example 3.} 
 In Figure 3, the map $h_1:[0,1)\to [0,1)$ is a piecewise contraction of 4 intervals. The point $p=\frac{3}{8}$ is a fixed point of $h_1$. Besides, the 1-periodic orbit $\gamma=O_{h_1}(p)$ is internal and so  regular. It is easy to show that $W^s(\gamma)=[\frac14,\frac12)\cup \{\frac34\}$. 
 
In Figure 3, the map $h_2:[0,1)\to [0,1)$ is a piecewise contraction of 5 intervals having positive constant slope. Notice that the 1-periodic points $p_1=\frac{1}{10}$ and $p_2=\frac{9}{10}$ are regular
whereas the $3$-periodic point $p_3=\frac15$ is degenerate. Moreover, the stable manifolds of $\gamma_1=O_{h_2}(p_1)$ and $\gamma_2=O_{h_2}(p_2)$ satisfy $W^s(\gamma_1)\cup W^s(\gamma_2)=[0,1)\setminus \gamma_3$.
 
 In Figure 1, $\gamma=\{\frac34\}$ is a degenerate 1-periodic orbit of $f_2$ such that $W^s(\gamma)=\{\frac34\}$. \\
 
 In general, the geometric structure of a stable manifold of a regular periodic orbit is given by the next result, which turns out to be of paramount importance for the proof of Theorem \ref{main}.
    
\begin{theorem}\label{sm} If $\gamma$ is a regular periodic orbit of $f$ then the interior of $W^s(\gamma)$ is the union of finitely many open intervals.
\end{theorem}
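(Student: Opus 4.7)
My plan is to show that ${\rm int}(W^s(\gamma))$ has finitely many connected components. Since it is open in $[0,1)$, it is automatically a countable disjoint union of open intervals, so the theorem reduces to showing finiteness of this collection.

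First I would establish a \emph{first-entry reformulation}: $x\in{\rm int}(W^s(\gamma))$ if and only if $f^N(x)\in{\rm int}(\Omega(\gamma))$ for some $N\ge 0$. The ``if'' direction is immediate from (TR1)--(TR2) of Proposition \ref{trs}: an open neighborhood of $x$ mapping into ${\rm int}(\Omega(\gamma))$ stays there under further iteration by (TR1) and is attracted to $\gamma$ by (TR2). For the converse, I would invoke the maximality of each $J_{p_i}$ guaranteed by Lemma \ref{existencemaximal}: if an open neighborhood $V\subset W^s(\gamma)$ of $x$ had its forward iterates accumulating on some $p_i\in\gamma$ only from outside $J_{p_i}$, a portion of a suitable iterate $f^m(V)$ could be adjoined to $J_{p_i}$ to produce a strictly larger trapping interval of $p_i$, contradicting maximality. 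Hence some iterate of $x$ lands in ${\rm int}(J_{p_i})\subset{\rm int}(\Omega(\gamma))$. This gives
$$
{\rm int}(W^s(\gamma)) \;=\; \bigcup_{N\ge 0} U_N,\qquad U_N:={\rm int}\bigl(f^{-N}({\rm int}(\Omega(\gamma)))\bigr),
$$
an increasing union by (TR1). Each $U_N$ is already a finite disjoint union of open intervals, because $f^N$ has at most $n^N$ continuity cells and ${\rm int}(\Omega(\gamma))$ has $k$ components, giving at most $kn^N$ pieces.

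It then suffices to show that the chain $U_0\subset U_1\subset\cdots$ stabilizes at some finite $N^\ast$. Assuming toward a contradiction that infinitely many components appear, they must accumulate at some $a\in[0,1]$. Every neighborhood of $a$ then meets both ${\rm int}(W^s(\gamma))$ and its complement, so $a$ is a limit point of preimages of the finite set $\partial\Omega(\gamma)\cup\{x_1,\ldots,x_{n-1}\}$, which are the only possible ``frontier'' points of any $U_N$ inside $(0,1)$.

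The main obstacle is deriving a contradiction from the existence of such an accumulation point. I expect to exploit the uniform contraction factor $\kappa<1$: by passing to a subsequence and applying $f$ enough times, one can shrink a neighborhood of $a$ until its image is small enough to sit inside some $J_{p_i}$ as an interval, straddling $p_i$ on both sides. Combined with the maximality of $J_{p_i}$ (which restricts how $J_{p_i}$ can be extended), this would yield a strictly larger trapping interval of $p_i$---contradicting Lemma \ref{existencemaximal}. Once stabilization is achieved, ${\rm int}(W^s(\gamma))=U_{N^\ast}$ is a finite union of open intervals, completing the proof.
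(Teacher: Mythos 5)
There are two genuine gaps, and both trace to the same flawed mechanism. First, your ``first-entry reformulation'' ($x\in{\rm int}(W^s(\gamma))$ iff some iterate lands in ${\rm int}(\Omega(\gamma))$) is not justified by the maximality of $J_{p_i}$. If a piece $P$ of $f^m(V)$ accumulates on $p_i$ from outside $J_{p_i}$, the set $P\cup\{p_i\}\cup J_{p_i}$ is an interval, but it is \emph{not} automatically a trapping interval: when $p_i$ is a discontinuity (or an endpoint of $J_{p_i}$ lying at a discontinuity), the two sides of $p_i$ are mapped by different branches of $f$, so the images of $P$ and of $J_{p_i}$ need not glue into a single interval, and no contradiction with Lemma \ref{existencemaximal} arises. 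Regular orbits can perfectly well contain discontinuities (only orbits avoiding discontinuities are automatically regular by Proposition \ref{intreg}), so this case cannot be dismissed. Second, the stabilization of the chain $U_0\subset U_1\subset\cdots$ is exactly the content of the theorem, and you explicitly leave it as ``the main obstacle,'' proposing to resolve it by again manufacturing a larger trapping interval ``straddling $p_i$ on both sides'' --- which fails for the same reason. As written, the proof does not close.

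For comparison, the paper takes a different and essentially measure-theoretic route that sidesteps both issues. It introduces the finitely many ``gap'' intervals $F_1,\ldots,F_r$ obtained from $E={\rm int}\big([0,1)\setminus f([0,1))\big)$ by deleting the finitely many points whose orbits hit a discontinuity (Lemmas \ref{toos}--\ref{3itens}); their forward iterates are pairwise disjoint intervals that never meet $\{x_0,\ldots,x_{n-1}\}$ and cover $[0,1)$ up to Lebesgue measure zero. Each orbit $O_f(F_j)$ either enters $\Omega(\gamma)$ wholly at a finite target time or never meets it (Lemma \ref{goesin}), so the finite union $S={\rm int}(\Omega(\gamma))\cup\bigcup_{j}\bigcup_{\ell<\tau(F_j,\gamma)}f^{\ell}(F_j)$ of open intervals sits inside ${\rm int}(W^s(\gamma))$ and differs from $W^s(\gamma)$ only by a null set; an infinite family of components of ${\rm int}(W^s(\gamma))$ would then force a component of positive measure inside $W^s(\gamma)\setminus S$, a contradiction. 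Note the paper never claims your exact first-entry identity --- only equality up to measure zero --- which is precisely what makes the argument robust at degenerate boundary behavior.
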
 

We postpone the proof of Theorem \ref{sm} to the end of this section. Now we will describe the key points necessary for its proof.

Firstly, we will define a family of finitely many pairwise disjoint open intervals $F_1$, $F_2$, \ldots, $F_r$ whose iterates $f^{\ell}(F_j)$ never meet the discontinuity set
$\{x_1,\ldots,x_{n-1}\}$ of $f$. In this way, $f^{\ell}(F_j)$ is an interval for every $j\in\{1,\ldots,r\}$ and ${\ell}\ge 0$.
The next step is to show that the union of the forward orbits $O_f(F_j)=\bigcup_{{\ell}=0}^\infty f^{\ell}(F_j)$ covers  the interval $[0,1)$ up to a null Lebesgue measure set. In this way, eventually some of these intervals will enter the trapping regions of the regular periodic orbits and stay there thereafter. It follows from Proposition \ref{trs} that
the orbit of an interval $F_j$ can enter at most one trapping region. The time that the interval $F_j$ takes to be captured by a trapping region $\Omega(\gamma)$ of a regular periodic orbit $\gamma$ is called the {\it target time} and is denoted by $\tau(F_j,\gamma)$. We set $\tau(F_j,\gamma)=+\infty$ if $O_f(F_j)\cap \Omega(\gamma)=\emptyset$.

Theorem \ref{sm} then will follow once we prove that for each regular periodic orbit $\gamma$
\begin{equation}\label{fund}
{\rm int}\,(W^s(\gamma))={\rm int}\,(\Omega(\gamma))\cup\bigcup_{j\in\Lambda(\gamma)}\bigcup_{\ell=0}^{{\tau(F_j,\gamma)-1}}f^{\ell}(F_j)\,\,\,\, (\textrm{up to a null Lebesgue measure set}),
\end{equation}
where
$$\Lambda(\gamma)=\{j\in\{1,\ldots,r\}\mid \tau(F_j,\gamma)<+\infty\}.
$$

Hereafter, we will implement the recipe described above in order to prove Theorem \ref{sm}.
  
 Let $E$ be the open set defined by
$$E= {\rm int}\,\Big([0,1)\setminus f\big([0,1)\big)\Big).$$
Notice that $E$ is the union  of at most $n+1$ open intervals $E_1$,$E_2$,\ldots,$E_s$. Moreover, the following is true.

\begin{lemma}\label{toos} For every positive integer $\ell$, 
$E\cap f^{\ell}(E)=\emptyset$.
\end{lemma}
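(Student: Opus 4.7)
The plan is to observe that once points leave $f([0,1))$ they never re-enter the complement of $f([0,1))$, and in particular they never re-enter $E$, because the forward orbit of the whole interval is trapped inside $f([0,1))$.

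First, I would record the tautological but crucial fact that $E$, being the interior of $[0,1)\setminus f([0,1))$, is itself contained in $[0,1)\setminus f([0,1))$. Hence
\[
E\cap f([0,1))=\emptyset.
\]
Next, I would argue by induction that $f^{\ell}([0,1))\subset f([0,1))$ for every $\ell\ge 1$: the base case $\ell=1$ is trivial, and if $f^{\ell}([0,1))\subset f([0,1))\subset [0,1)$ then applying $f$ to both sides gives $f^{\ell+1}([0,1))\subset f\bigl(f([0,1))\bigr)\subset f([0,1))$. In other words, the image $f([0,1))$ is forward invariant under $f$.

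Combining the two observations, for any $\ell\ge 1$ we have $f^{\ell}(E)\subset f^{\ell}([0,1))\subset f([0,1))$, while $E$ itself is disjoint from $f([0,1))$. Therefore $E\cap f^{\ell}(E)=\emptyset$, as claimed.

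There is essentially no obstacle here; the lemma is a direct consequence of the defining property of $E$ together with the forward invariance of $f([0,1))$. The real content of the forthcoming argument will presumably be using the intervals $E_1,\ldots,E_s$ together with Lemma \ref{toos} to construct the pairwise disjoint family $F_1,\ldots,F_r$ whose iterates avoid the discontinuity set and cover $[0,1)$ up to a Lebesgue null set, as outlined in the recipe preceding the lemma.
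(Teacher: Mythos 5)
Your proof is correct and follows essentially the same route as the paper: the paper simply notes that $f^{\ell}(E)=f\bigl(f^{\ell-1}(E)\bigr)\subset f([0,1))$ while $E\subset [0,1)\setminus f([0,1))$, which is exactly your observation minus the (harmless but unnecessary) induction on the forward invariance of $f([0,1))$.
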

\begin{proof} The assertion follows from the fact that $E\subset [0,1)\setminus f([0,1))$ and $f^{\ell}(E)=f\big(f^{\ell-1}(E)\big)\subset f([0,1))$.
\end{proof}

Now let $B$ be the set consisting of those points of $E$ which are taken by some iterate of $f$ into a discontinuity of $f$, that is:
\begin{equation}\label{BBBB}
B=E\cap \bigcup_{\ell=0}^{+\infty} f^{-\ell}\big(\{ x_1,\ldots,x_{n-1} \}\big).
\end{equation}

 \begin{lemma}\label{Bisfinite} The set $B$ has at most $n-1$ elements.
\end{lemma}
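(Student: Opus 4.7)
My plan is to construct an injection from $B$ into the discontinuity set $\{x_1,\ldots,x_{n-1}\}$, which immediately yields $|B|\le n-1$. For each $y\in B$, the definition of $B$ guarantees that some forward iterate of $y$ lies in $\{x_1,\ldots,x_{n-1}\}$, so I may set
$$
\ell(y)=\min\bigl\{\ell\ge 0 \,:\, f^{\ell}(y)\in\{x_1,\ldots,x_{n-1}\}\bigr\}
$$
and define $\Phi(y)=f^{\ell(y)}(y)\in\{x_1,\ldots,x_{n-1}\}$.

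The heart of the argument is to show that $\Phi$ is injective. Suppose $y_1,y_2\in B$ satisfy $\Phi(y_1)=\Phi(y_2)$, and assume without loss of generality that $\ell(y_1)\le\ell(y_2)$. Then
$$
f^{\ell(y_1)}(y_1)=f^{\ell(y_2)}(y_2)=f^{\ell(y_1)}\bigl(f^{\ell(y_2)-\ell(y_1)}(y_2)\bigr),
$$
and, since $f$ is injective by hypothesis (hence so is every iterate $f^m$), I can cancel the common prefix to obtain $y_1=f^{\ell(y_2)-\ell(y_1)}(y_2)$. If the exponent were strictly positive, this would place $y_1\in f([0,1))$, contradicting $y_1\in E\subset [0,1)\setminus f([0,1))$. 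Thus $\ell(y_1)=\ell(y_2)$, and the surviving equation $f^{\ell(y_1)}(y_1)=f^{\ell(y_1)}(y_2)$ forces $y_1=y_2$ by injectivity of $f^{\ell(y_1)}$.

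The two ingredients used are (a) injectivity of $f$, which is built into the standing definition of a piecewise contraction, and (b) the defining property of $E$ as lying outside $f([0,1))$, the same property underlying Lemma \ref{toos}. I do not expect any real obstacle: the lemma is essentially a bookkeeping statement recording that each discontinuity can be the endpoint of at most one finite backward trajectory originating in $E$, precisely because $E$ is disjoint from the image of $f$ and therefore admits no nontrivial preimages in $[0,1)$.
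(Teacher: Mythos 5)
Your proof is correct and follows essentially the same route as the paper: the paper shows that for each discontinuity $x_j$ the set $E\cap\bigcup_{\ell\ge 0}f^{-\ell}(\{x_j\})$ has at most one point, using injectivity of $f$ plus $E\cap f^{\ell}(E)=\emptyset$ (Lemma \ref{toos}), which is exactly your injection $\Phi$ phrased fibre-by-fibre. The only cosmetic difference is that you invoke $E\subset[0,1)\setminus f([0,1))$ directly rather than citing Lemma \ref{toos}; the underlying fact is the same.
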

\begin{proof}  We claim that the set
$E\cap\bigcup_{\ell=0}^{+\infty} f^{-\ell}(\{x_j\})$ has at most one element for each $j\in\{1,\ldots,n-1\}$, otherwise the injectivity of $f$ would imply that there exist $x,y\in E$, $x\neq y$, and
$0\le m<\ell$ such that $f^m(x)=x_j=f^\ell(y)$. Hence $x=f^{\ell-m}(y)$. In particular, $E\cap f^{\ell-m}(E)\neq\emptyset$, which contradicts Lemma \ref{toos}.
Therefore, the claim is true and $B$ has at most $n-1$ elements.
\end{proof}

A {\it measurable partition of $[0,1)$ into intervals} is a denumerable family of open, pairwise disjoint intervals $A_1$, $A_2$, $A_3$, $\ldots$ such that $[0,1)\setminus\bigcup_{j=1}^\infty{A_j}$ has Lebesgue measure zero. 


  \begin{lemma}\label{3itens} The set $F=E\setminus B$ is  the union of $r\le 2n$ pairwise disjoint open intervals $F_1$, $F_2$,\ldots, $F_r$. Moreover
     \begin{itemize}
   \item [(a)] $F\cap f^{\ell}(F)=\emptyset$ for every positive integer $\ell\ge 0$;
   \item [(b)] $f^{\ell}(F_j)\subset [0,1)\setminus\{x_0,x_1,\ldots,x_{n-1}\}$ for every $\ell\ge 0$ and $j\in\{1,\ldots, r\}$;
  \item [(c)] $\{f^{\ell}(F_j)\mid \ell\ge 0\,\,\textrm{and}\,\, j\in\{1,\ldots,{r}\}\}$ is a measurable partition of $[0,1)$ into open intervals.
     \end{itemize}
  \end{lemma}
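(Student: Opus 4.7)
The plan is to handle the component count first and then prove items (a), (b), (c) in order, with the main difficulty being the Lebesgue-measure claim in (c). For the bound $r \le 2n$: removing $k$ isolated points from a disjoint union of $s$ open intervals produces at most $s+k$ open-interval components, and since $E$ has at most $n+1$ components by hypothesis and $|B| \le n-1$ by Lemma \ref{Bisfinite}, I obtain that $F = E \setminus B$ is a disjoint union of $r \le 2n$ open intervals. Item (a) is then immediate from Lemma \ref{toos}: because $F \subset E$, one has $F \cap f^{\ell}(F) \subset E \cap f^{\ell}(E) = \emptyset$ for every $\ell \ge 1$.

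For item (b) I would induct on $\ell$, maintaining the slightly stronger invariant that each $f^{\ell}(F_j)$ is an open interval of $\R$ contained in a single open continuity interval $(x_{m-1},x_m)$. Base $\ell = 0$: each component $F_j$ is open in $\R$ and so misses $x_0 = 0$; and $F_j \cap \{x_1,\ldots,x_{n-1}\} \subset E \cap \{x_1,\ldots,x_{n-1}\} \subset B$ by the $\ell=0$ term in the definition (\ref{BBBB}) of $B$, which forces this intersection to be empty since $F_j \cap B = \emptyset$. Inductive step: the restriction of $f$ to $(x_{m-1},x_m)$ is Lipschitz and injective, hence a monotone homeomorphism onto its image, so $f^{\ell+1}(F_j)$ is an open interval in $[0,1)$ and thus automatically avoids $x_0 = 0$; if instead it contained some $x_i$ with $1 \le i \le n-1$, the corresponding preimage $y \in F_j$ would lie in $E \cap f^{-(\ell+1)}(\{x_i\}) \subset B$, contradicting $F_j \cap B = \emptyset$.

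Item (c) splits into disjointness and the measure-zero claim. Disjointness follows by a brief injectivity argument: if $f^{\ell_1}(F_{j_1}) \cap f^{\ell_2}(F_{j_2}) \ne \emptyset$ with $\ell_1 \le \ell_2$, injectivity of $f^{\ell_1}$ forces $F_{j_1}$ to meet $f^{\ell_2-\ell_1}(F_{j_2})$, so (a) rules out $\ell_1 < \ell_2$, and the pairwise disjointness of the components of $F$ then forces $j_1 = j_2$.

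The main obstacle is the measure-zero complement. My plan is to observe that $[0,1) \setminus f([0,1))$ is a finite union of intervals and hence equals $E \cup N$ for a finite set $N$ of boundary points, then iterate the decomposition $[0,1) = f([0,1)) \cup E \cup N$ to obtain
\[
[0,1) \subset \bigcup_{\ell=0}^{m-1} f^{\ell}(E) \,\cup\, \bigcup_{\ell=0}^{m-1} f^{\ell}(N) \,\cup\, f^{m}([0,1)).
\]
Because $f$ is $\kappa$-Lipschitz on each of its $n$ continuity intervals and remains injective upon iteration, one has $|f^{m}([0,1))| \le \kappa^{m} \to 0$; the forward orbit of the finite set $N$ is countable and hence null, and replacing $E$ by $F = E \setminus B$ introduces only the countable additional null set $\bigcup_{\ell} f^{\ell}(B)$. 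Letting $m \to \infty$ then shows that $\bigcup_{\ell,j} f^{\ell}(F_{j})$ covers $[0,1)$ up to Lebesgue measure zero, which is the desired measurable partition property.
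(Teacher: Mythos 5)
Your proof is correct, and for the component count, items (a), (b), and the disjointness half of (c) it runs essentially parallel to the paper's argument (the paper derives $r\le 2n$ and item (b) directly from Lemma \ref{Bisfinite}, Lemma \ref{toos} and the definition of $F$; your induction for (b) merely fills in the routine details). The genuine divergence is in the measure-zero claim of (c). The paper argues backwards: it supposes the complement of $\bigcup_{\ell,j}f^{\ell}(F_j)$ contains an interval $(a,b)$ and derives a contradiction from the fact that $f^{-m}(a,b)$ has total length at least $\kappa^{-m}(b-a)$, which eventually exceeds $1$. You argue forwards: iterating the decomposition $[0,1)=f([0,1))\cup E\cup N$ gives $[0,1)\subset f^{m}([0,1))\cup\bigcup_{\ell<m}f^{\ell}(E\cup N)$, and $|f^{m}([0,1))|\le\kappa^{m}\to 0$ because $f$ contracts Lebesgue measure by the factor $\kappa$ on each continuity interval, while the forward orbits of the finite sets $N$ and $B$ contribute only countable (hence null) sets. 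Both routes exploit the same contraction, but yours has a technical advantage: as literally written, the paper's contradiction only excludes intervals from the complement, and since that complement is a closed set which could in principle have positive measure without containing an interval, one would have to rerun the expansion estimate on a positive-measure set rather than on an interval to close the argument. Your forward estimate bounds the measure of the complement directly by $\kappa^{m}$ plus a countable set, so no such repair is needed.
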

  \begin{proof} It follows from Lemma \ref{Bisfinite} that $F$ is the union of finitely many disjoint intervals \mbox{$F_1$, $\ldots$, $F_r$}.
  By Lemma \ref{toos}, $F\cap f^{\ell}(F)\subset E\cap f^{\ell}(E)=\emptyset$ for every ${\ell}>0$.
 The item (b) follows immediately from the definition of $F$. It follows from (a), (b) and the injectivity of $f$ that the sets $f^{\ell}(F_j)$ form a family of pairwise disjoint intervals. It remains to prove that $[0,1)\setminus \bigcup_{{\ell},j} f^{\ell}(F_j)$ has Lebesgue measure zero. Suppose that this is false and let $(a,b)\subset[0,1)\setminus \bigcup_{{\ell},j} f^{\ell}(F_j)$. Then there exists $0<\lambda<1$ such that for every $m\ge 0$,
  $f^{-m}(a,b)$ is the union of finitely many open intervals of total length not smaller than $\frac{1}{{\lambda}^m}(b-a)$. This is not possible because $f^{-m}(a,b)\subset [0,1)$ for every $m\ge 0$
  and $\frac{1}{{\lambda}^m}(b-a)\to+\infty$ as $m\to+\infty$.
 \end{proof}
 
  \begin{lemma}\label{goesin} Let $\Omega(\gamma)$ be the maximal trapping region of a regular periodic orbit $\gamma$. For each \mbox{$j\in\{1,\ldots,r\}$} and for each $\ell\ge 0$, either $f^{\ell}(F_j)\cap\Omega(\gamma)=\emptyset$ or $f^{\ell}(F_j)\subset \Omega(\gamma)$. 
         \end{lemma}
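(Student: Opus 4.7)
The plan is to argue by contradiction. Suppose $f^{\ell}(F_j)\cap\Omega(\gamma)\ne\emptyset$ but $f^{\ell}(F_j)\not\subset\Omega(\gamma)$. By (TR3), $\Omega(\gamma)=\bigcup_{p\in\gamma}J_p$ is a disjoint union of $k$ intervals, where $k$ is the period of $\gamma$, so I may pick $q\in\gamma$ with $J_q\cap f^{\ell}(F_j)\ne\emptyset$. The strategy is to enlarge $J_q$ into a trapping interval of $q$ that still contains $f^{\ell}(F_j)$; the maximality of $J_q$ (Lemma \ref{eti}) will then force $f^{\ell}(F_j)\subset J_q\subset\Omega(\gamma)$, a contradiction.

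Set $K_0=J_q\cup f^{\ell}(F_j)$. Since the two intervals share at least one point, $K_0$ is itself an interval containing $q$. For every $m\ge 0$,
$$
f^{mk}(K_0)=f^{mk}(J_q)\cup f^{mk+\ell}(F_j)
$$
is an interval containing $q$: the set $f^{mk}(J_q)$ is an interval because $J_q$ is a trapping interval (so $f^i(J_q)$ is an interval for $0\le i\le k$, and $f^k(J_q)\subset J_q$ allows one to induct past $i=k$ without meeting a discontinuity); the set $f^{mk+\ell}(F_j)$ is an interval by Lemma \ref{3itens}(b); the two intervals intersect at $f^{mk}(z)$ for any $z\in J_q\cap f^{\ell}(F_j)$; and both contain $q=f^{mk}(q)$. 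Let $K^{*}=\bigcup_{m\ge 0}f^{mk}(K_0)$; as a union of intervals each passing through $q$, $K^{*}$ is itself an interval.

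The same reasoning shows that for every $1\le i\le k$, the set $f^i(K^{*})=\bigcup_{m\ge 0}f^{i+mk}(K_0)$ is a union of intervals all containing $f^i(q)$, hence an interval. Moreover $f^k(K^{*})=\bigcup_{m\ge 1}f^{mk}(K_0)\subset K^{*}$, so $K^{*}$ is a trapping interval of $q$. By the maximality of $J_q$, $K^{*}\subset J_q$, and in particular $f^{\ell}(F_j)\subset J_q\subset\Omega(\gamma)$, contradicting the initial assumption. The only delicate point in the argument is the persistence of the interval property through arbitrarily many iterates; this rests on combining the trapping property of $J_q$ with the fact, supplied by Lemma \ref{3itens}(b), that the forward orbit of $F_j$ never meets a discontinuity of $f$.
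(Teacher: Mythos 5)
Your argument is correct, and it takes a genuinely different route from the paper's. The paper never manipulates the maximal trapping intervals directly: it shows that $f^{\ell}(F_j)$ cannot meet $\partial\Omega(\gamma)$ by locating the \emph{last} time $\ell'\ge\ell$ at which the orbit of $F_j$ meets the finite set $\partial\Omega(\gamma)$, using Lemma \ref{3itens}(b) to see that all points of $f^{\ell'}(F_j)$ share the same $\omega$-limit set (which must be $\gamma$, since part of $f^{\ell'}(F_j)$ lies in ${\rm int}\,(\Omega(\gamma))$), deducing that $f^{m}(F_j)\subset\Omega(\gamma)$ for all $m>\ell'$, and finally contradicting maximality via the forward invariance of $f^{\ell'}(F_j)\cup\Omega(\gamma)$. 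You bypass the $\omega$-limit bookkeeping entirely and exhibit an explicit larger trapping interval $K^{*}$ of $q$; this is more direct, and it also makes the appeal to maximality more concrete than the paper's (which invokes maximality of the region $\Omega(\gamma)$ rather than of a single trapping interval $J_q$).

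The one step to flag is your claim that $f^{mk}(J_q)$ is an interval for \emph{all} $m\ge 0$. Definition \ref{ti} only guarantees this for the first $k$ iterates, and the parenthetical justification you give ("allows one to induct past $i=k$ without meeting a discontinuity") does not quite work: a trapping interval may straddle a discontinuity while still having an interval as image, and in general the $f$-image of a subinterval of such an interval need not be an interval, so the induction step $f^{k+i}(J_q)=f^{i}(f^{k}(J_q))$ is not automatic. (Also, Lemma \ref{3itens}(b) controls the orbit of $F_j$, not of $J_q$, so it cannot supply this.) The paper relies on the very same unproved assertion in the proof of Proposition \ref{trs}, where it states without argument that $f^{\ell}(f(J_p))$ is an interval for all $\ell\ge 0$, so you are no worse off than the source. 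If you want to avoid the issue altogether, replace $K^{*}$ by $K^{**}=J_q\cup\bigcup_{m\ge 0}f^{mk+\ell}(F_j)$: for $0\le i\le k$ one has $f^{i}(K^{**})=f^{i}(J_q)\cup\bigcup_{m\ge 0}f^{i+mk+\ell}(F_j)$, a union of intervals each of which meets the interval $f^{i}(J_q)$ (since $f^{i+mk+\ell}(F_j)\ni f^{i+mk}(z)\in f^{i}\bigl(f^{mk}(J_q)\bigr)\subset f^{i}(J_q)$ for any $z\in J_q\cap f^{\ell}(F_j)$), hence an interval, and $f^{k}(K^{**})\subset K^{**}$; this version uses only the first $k$ iterates of $J_q$.
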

         \begin{proof} Suppose that $f^{\ell}(F_j)\cap\partial\Omega(\gamma)\neq\emptyset$. By (TR3) of Proposition \ref{trs}, $\partial\Omega(\gamma)$ is a finite point-set. So there exists $\ell'\ge \ell$ such
         that (i) $f^{\ell'}(F_j)\cap \partial\Omega(\gamma)\neq\emptyset$ and (ii) $f^m(F_j)\cap \partial\Omega(\gamma)=\emptyset$ for every $m>\ell'$. By item (b) of Lemma  \ref{3itens}, $\omega(x)=\omega(y)$ for every $x,y\in f^{\ell'}(F_j)$. By (i),  $f^{\ell'}(F_j)\cap{\rm int}\,(\Omega(\gamma))$ has non-empty interior.
         By \mbox{Proposition \ref{trs}}, for every $x\in f^{\ell'}(F_j)\cap{\rm int}\,(\Omega(\gamma))$ we have that $\omega(x)=\gamma$. By the above, $\omega(x)=\gamma$ for every $x\in f^{\ell'}(F_j)$. This together with (ii) imply $f^m(F_j)\subset \Omega(\gamma)$ for every $m>\ell'$. In this way,
         if $U:=f^{\ell}(F_j)\cup \Omega(\gamma)$ then by (TR1) of Proposition \ref{trs} and by the above,
         $$f(U)\subset f^{\ell+1}(F_j)\cup f(\Omega(\gamma))\subset \Omega(\gamma)\subset U.$$
         This contradicts $\Omega(\gamma)$ being a maximal trapping region. So $f^\ell(F_j)\cap\partial\Omega(\gamma)=\emptyset$.
         \end{proof}
         
  \begin{corollary}\label{capturesatleastone} Let $\Omega(\gamma)$ be the maximal trapping region of a regular periodic orbit $\gamma$, then there exist $\ell\ge 0$ and $j\in \{1,\ldots,r\}$ such that $f^\ell(F_j)\subset\Omega(\gamma)$.
  \end{corollary}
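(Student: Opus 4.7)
The plan is to use Lemma \ref{3itens}(c) together with the fact that $\Omega(\gamma)$ has positive Lebesgue measure, then invoke Lemma \ref{goesin} as the dichotomy that converts a set-theoretic intersection into an actual containment.

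First I would note that, by convention in this paper, an interval has non-empty interior. Hence every maximal trapping interval $J_p$ (whose existence is guaranteed by Lemma \ref{eti}) carries positive Lebesgue measure, and therefore so does $\Omega(\gamma) = \bigcup_{p \in \gamma} J_p$.

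Next, by item (c) of Lemma \ref{3itens}, the family $\{f^{\ell}(F_j) \mid \ell \ge 0,\, j \in \{1,\ldots,r\}\}$ is a measurable partition of $[0,1)$, i.e., its complement in $[0,1)$ has Lebesgue measure zero. Combining this with the previous paragraph, the set
\[
\Omega(\gamma) \cap \bigcup_{\ell \ge 0,\, j \in \{1,\ldots,r\}} f^{\ell}(F_j)
\]
has positive Lebesgue measure, so in particular it is non-empty. Therefore there exist $\ell \ge 0$ and $j \in \{1,\ldots,r\}$ such that $f^{\ell}(F_j) \cap \Omega(\gamma) \ne \emptyset$.

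Finally, by Lemma \ref{goesin}, for any such pair $(\ell, j)$ either $f^{\ell}(F_j) \cap \Omega(\gamma) = \emptyset$ or $f^{\ell}(F_j) \subset \Omega(\gamma)$. The first possibility is excluded, so we conclude $f^{\ell}(F_j) \subset \Omega(\gamma)$, as required. There is no real obstacle here: the whole content of the corollary is packaged in Lemmas \ref{3itens}(c) and \ref{goesin}, and the only thing to verify independently is that $\Omega(\gamma)$ has positive measure, which follows immediately from the convention that intervals are non-degenerate.
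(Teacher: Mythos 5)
Your proof is correct and follows exactly the route of the paper, which simply cites Lemma \ref{3itens}(c) and Lemma \ref{goesin}; you have merely spelled out the implicit step that $\Omega(\gamma)$ has positive Lebesgue measure because intervals are non-degenerate by convention.
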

  \begin{proof} It follows from item (c) of Lemma \ref{3itens} and from Lemma \ref{goesin}.
  \end{proof}

    \begin{proof}[Proof of Theorem \ref{sm}] For each $j\in\{1,\ldots,r\}$, let $\tau(F_j,\gamma)=\inf\, \{\ell\in\N\mid f^{\ell}(F_j)\subset\Omega(\gamma)\}$, where
$\inf\emptyset=+\infty$. Let $\Lambda(\gamma)=\{j\in\{1,\ldots,r\}\mid \tau(F_j,\gamma)<+\infty\}$. It follows from \mbox{Corollary \ref{capturesatleastone}} that $\Lambda(\gamma)\neq\emptyset$. Now Proposition \ref{trs} and Lemmas \ref{3itens}  and \ref{goesin} ensure that the following statements are true:
\begin{itemize}
\item [(I)] $f^{\ell}(F_j)\cap\Omega(\gamma)=\emptyset$ if $j\in\Lambda(\gamma)$ and $0\le \ell < \tau(F_j,\gamma)$;
\item [(II)] $f^{\ell}(F_j)\subset\Omega(\gamma)$ if $j\in\Lambda(\gamma)$ and $\ell\ge \tau(F_j,\gamma)$;
\item [(III)] $O_f(F_j)\cap\Omega(\gamma)=\emptyset$ if $j\not\in\Lambda(\gamma)$.
\end{itemize} 
Let 
$$S={\rm int}\,(\Omega(\gamma))\cup\bigcup_{j\in\Lambda(\gamma)} \bigcup_{\ell=0}^{\tau(F_j,\gamma)-1} f^{\ell} (F_j).$$ By (TR3) of Proposition \ref{trs} and by item (c) of Lemma \ref{3itens}, $S$ is the union of finitely many open intervals. By item (II) above and (TR2) of \mbox{Proposition \ref{trs}}, we have that $S\subset W^s(\gamma)$. In particular, $S\subset {\rm int}\,(W^s(\gamma))$ because $S$ is open.

It follows from (c) of Lemma \ref{3itens} and from itens (I)-(III) above that $$\Omega(\gamma)=\bigcup_{j\in\Lambda(\gamma)}\bigcup _{\ell\ge\tau(F_j,\gamma)}
f^{\ell}(F_j)\,\, (\textrm{up to a null Lebesgue measure set}).$$
Hence, $S=\bigcup_{j\in\Lambda(\gamma)}\bigcup_{\ell=0}^{+\infty} f^{\ell}(F_j)\,\,(\textrm{up to a null Lebesgue measure set}).$

By (III), 
$$
W^s(\gamma)\setminus S=W^s(\gamma)\Big\backslash\bigcup_{j\in\Lambda(\gamma)}\bigcup _{\ell=0}^{+\infty}
f^{\ell}(F_j)=W^s(\gamma)\Big\backslash\bigcup_{j=1}^r\bigcup _{\ell=0}^{+\infty}
f^{\ell}(F_j)\,\,(\textrm{up to a null measure set}).$$
By (c) of Lemma \ref{3itens}, $\bigcup_{j=1}^r \bigcup_{\ell=0}^{+\infty} f^{\ell}(F_j)$ has Lebesgue measure one and so
$W^s(\gamma)\setminus S$ has Lebesgue measure zero. In particular,  ${\rm int}\,(W^s(\gamma))\setminus S$ has empty interior. 

Suppose that ${\rm int}\,(W^s(\gamma))$ is not an union of finitely many open intervals. Then there exists a denumerable family of pairwise disjoint open intervals $U_1,U_2,\ldots$ such that ${\rm int}\,(W^s(\gamma))=\bigcup_{j=1}^\infty U_j$. Moreover, because $S$ is the union of finitely many pairwise disjoint open intervals and $S\subset {\rm int}\,(W^s(\gamma))$, there exists a positive integer $d$ such that
$S\subset \bigcup_{j=1}^d U_j$. Then $W^s(\gamma)\setminus S$ contains the open set
$U_{d+1}$ which is a contradiction since $W^s(\gamma)\setminus S$ has Lebesgue measure zero.
\end{proof}

 \section{A tight upper bound for the number of regular periodic orbits}

In this section we will present a complete proof of the following result.

\begin{theorem}\label{weaker} Every piecewise contraction of $n$ intervals has at most $n$
regular periodic orbits.
\end{theorem}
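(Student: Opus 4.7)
The plan is to extract from the proof of Theorem \ref{sm} the injective assignment $\gamma \mapsto \Lambda(\gamma)$ of regular periodic orbits to non-empty subsets of $\{1,\ldots,r\}$, and then sharpen the resulting naive bound $M\le r\le 2n$ down to $M\le n$ by an additional combinatorial argument that exploits the structure of the components of $E$.

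Concretely, I would enumerate the distinct regular periodic orbits $\gamma_1,\ldots,\gamma_M$ of $f$ and, for each $\gamma_i$, set $\Lambda(\gamma_i)=\{j\in\{1,\ldots,r\}:\tau(F_j,\gamma_i)<+\infty\}$. Corollary \ref{capturesatleastone} gives $\Lambda(\gamma_i)\ne\emptyset$, while Lemma \ref{goesin} combined with Lemma \ref{disj} and invariance (TR1) of Proposition \ref{trs} forces the $\Lambda(\gamma_i)$'s to be pairwise disjoint: once an iterate $f^{\ell}(F_j)$ enters a trapping region, forward invariance keeps it there, so it cannot also enter a different one. This already yields the preliminary bound $M\le r\le 2n$ from Lemma \ref{3itens}.

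To remove the factor of two, I would group the $r$ intervals $F_j$ according to which connected component $E_k$ of $E$ they belong to; recall $s\le n+1$ and $F=E\setminus B$ with $|B|\le n-1$. The plan is to refine the assignment $\gamma_i\mapsto\Lambda(\gamma_i)$ into an injective charging of each orbit to a distinguished pair $(E_{k(i)},\varepsilon_i)$, where $\varepsilon_i$ records a ``side'' determined by the points of $B$ splitting $E_{k(i)}$ into the individual intervals $F_j$. The contraction inequality $|f(\Omega(\gamma_i))|\le\kappa\,|\Omega(\gamma_i)|$ forces every maximal trapping region to carve out an $F_j$ of positive measure in a location dictated by the boundary behaviour of its components $J_p$, which supplies such a distinguished datum; together with the disjointness of trapping regions, this is intended to produce an injection of $\{\gamma_1,\ldots,\gamma_M\}$ into a set of cardinality at most $n$.

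The main obstacle is the combinatorial bookkeeping in the degenerate configurations where the ``side'' datum collapses: when $\Omega(\gamma_i)$ reaches the boundary point $0$ of $[0,1)$, when two maximal trapping regions share an endpoint (for instance at an iterated preimage of a discontinuity, cf.\ Example 1), and when the restriction $f|_{I_i}$ is orientation-reversing (which interchanges the roles of left and right). I expect this to be handled by a case analysis on each endpoint $a_p$ of each component $J_p$ of $\Omega(\gamma_i)$, namely whether $a_p$ equals $0$, is an iterated preimage of a discontinuity $x_j$, or is a periodic point of another orbit, together with the orientation of $f$ on the adjacent continuity interval. Executing this case analysis cleanly — without the simplifying hypothesis of the piecewise-increasing semi-open setting of Appendix A — is the principal difficulty; once completed, the sharp bound $M\le n$ follows.
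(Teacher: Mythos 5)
There is a genuine gap. The first half of your argument is fine: the sets $\Lambda(\gamma_i)=\{j:\tau(F_j,\gamma_i)<+\infty\}$ are non-empty by Corollary \ref{capturesatleastone} and pairwise disjoint by Lemma \ref{goesin}, Lemma \ref{disj} and (TR1), so you get $M\le r\le 2n$. But the entire content of the theorem is the improvement from $2n$ to $n$, and that step is only announced, not carried out. Your proposed charging of each orbit to a pair $(E_{k(i)},\varepsilon_i)$ does not obviously land in a set of cardinality $n$: the component count of $E$ is only bounded by $n+1$, so ``component of $E$ plus a side'' naively gives $2(n+1)$ targets, which is worse than $2n$, and you give no mechanism by which distinct orbits charged to the same component are forced to receive opposite sides. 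Moreover the assertion that the contraction inequality forces each maximal trapping region to ``carve out an $F_j$ of positive measure'' is not what Corollary \ref{capturesatleastone} gives you --- it only guarantees that some forward iterate $f^{\ell}(F_j)$ eventually lies in $\Omega(\gamma)$, and that $F_j$ can sit far from $\Omega(\gamma)$; so the ``location dictated by the boundary behaviour'' datum is not actually available. Finally, you yourself flag that the case analysis (orientation-reversing branches, trapping regions touching $0$ or sharing endpoints) is the principal difficulty and leave it unexecuted; that case analysis \emph{is} the proof.

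For comparison, the paper does not count via the $F_j$ at all. It sets $W_j={\rm int}\,(W^s(\gamma_j))$ (a finite union of intervals by Theorem \ref{sm}), adds the complementary region $W_{m+1}$, proves $f(W_j)\subset\overline{W_j}$ (Lemma \ref{arefi}) and that any point lying in two closures $\overline{W_i}\cap\overline{W_j}$ is eventually mapped onto a discontinuity (Lemma \ref{inside}), and then defines $\beta(W_j)=f^{q_j}(\inf W_j)$ where $q_j$ is the first time the left endpoint of $W_j$ hits $\{x_1,\ldots,x_{n-1}\}$ (with $\beta=x_0$ for the leftmost piece). Injectivity of $f$ plus the invariance of the $\overline{W_j}$ makes $\beta$ injective into an $n$-element set, which is exactly the bound you need. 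If you want to salvage your route, you would have to replace the $(E_k,\varepsilon)$ target set by something of size $n$ tied to the discontinuities, which is essentially what the paper's $\beta$ does.
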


By Proposition \ref{intreg}, Theorem \ref{weaker} asserts that a piecewise contraction of $n$ intervals has at most $n$ internal periodic orbits. The proof of the Main Theorem (Theorem \ref{main}) is a variation of the proof of Theorem \ref{weaker}. The steps necessary for obtaining it from the Proof of Theorem \ref{weaker} will be outlined in the next section.

Let \mbox{$\gamma_1$, $\gamma_2$, \ldots, $\gamma_m$} be a collection of pairwise distinct regular periodic orbits of $f$. Set
$W_j={\rm int}\,(W^s(\gamma_j))$ for every $j\in\{1,\ldots,m\}$ and let $W_{m+1}={\rm int}\,([0,1)\setminus\bigcup_{j=1}^m \overline{W_j})$. By Theorem \ref{sm}, $W_{m+1}$ is the union of finitely
many intervals. Moreover, $\bigcup_{j=1}^{m+1} \overline{W_j}=[0,1]$. 

Theorem \ref{weaker} states that $m\le n$. Its proof follows straightforwardly from the next lemmas.

\begin{lemma}\label{arefi} For every $j\in \{1,\ldots,m+1\}$ we have that ${f(W_{j})}\subset \overline{W_{j}}$. 
\end{lemma}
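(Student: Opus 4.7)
The plan is to handle the two cases $j\le m$ and $j=m+1$ separately, combining in both cases Lemma~\ref{fww} (forward invariance of the stable manifolds) with the fact that $f\vert_{I_i}$ is Lipschitz and injective on each continuity interval $I_i$. The structural input I will rely on is Theorem~\ref{sm}, which says that each $W_j$ with $j\le m$ is a finite union of open intervals, so $\overline{W_j}\setminus W_j$ is finite.

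For $j\le m$, fix $x\in W_j$ and choose a continuity interval $I_i$ of $f$ containing $x$. Since $W_j$ is open, I pick a (possibly one-sided) half-open sub-interval $V\ni x$ with $V\subset W_j\cap I_i$. From $V\subset W_j\subset W^s(\gamma_j)$ and Lemma~\ref{fww}, $f(V)\subset W^s(\gamma_j)$; since $f\vert_{I_i}$ is continuous and injective, $f(V)$ is itself a (half-)open interval whose open part lies in $W_j={\rm int}\,(W^s(\gamma_j))$. Then $f(x)$ is either in $W_j$ or a one-sided limit of points of $W_j$, so $f(x)\in\overline{W_j}$.

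For $j=m+1$, I argue by contradiction. Suppose some $x\in W_{m+1}$ has $f(x)\notin\overline{W_{m+1}}$, and pick an open $U\ni f(x)$ disjoint from $\overline{W_{m+1}}$. Since $[0,1]=\bigcup_{i=1}^{m+1}\overline{W_i}$, necessarily $U\subset\bigcup_{i=1}^m\overline{W_i}$. Taking a continuity interval $I_k$ with $x\in I_k$, I pick a half-open sub-interval $V\ni x$ inside $W_{m+1}\cap I_k$ small enough that $f(V)\subset U$. Then $f(V)$ is a non-degenerate interval contained in $\bigcup_{i=1}^m\overline{W_i}$, and since each $\overline{W_i}\setminus W_i$ is finite, $f(V)$ must meet some $W_{j_0}$ with $j_0\le m$. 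Pick $x'\in V$ with $f(x')\in W_{j_0}$.

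Applying the same continuity step at $x'$ yields a half-open sub-interval $V'\ni x'$ inside a single continuity interval with $f(V')\subset W_{j_0}$. Since $\omega(y)=\omega(f(y))$ for every $y$, each $y\in V'$ lies in $W^s(\gamma_{j_0})$, so the open part of $V'$ lies in $W_{j_0}={\rm int}\,(W^s(\gamma_{j_0}))$. But $x'\in V\subset W_{m+1}$ and $W_{m+1}$ is open, so after further shrinking the open part of $V'$ also lies in $W_{m+1}$, contradicting $W_{j_0}\cap W_{m+1}=\emptyset$. The main obstacle throughout is correctly handling one-sidedness at discontinuities of $f$: at both $x$ and $x'$ one must work inside a single continuity interval so that $f$ is genuinely Lipschitz on $V$ and $V'$, and the finiteness of $\overline{W_i}\setminus W_i$ from Theorem~\ref{sm} is exactly what forces $f(V)$ to penetrate the open set $W_{j_0}$ rather than merely the boundary of some $\overline{W_i}$.
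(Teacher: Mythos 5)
Your proof is correct and follows essentially the same route as the paper's: both cases rest on Lemma \ref{fww} together with the finite-union-of-intervals structure of the $W_j$ supplied by Theorem \ref{sm}, and your $j=m+1$ case is the same contradiction obtained by pulling back a one-sided neighbourhood inside a single continuity interval. The only cosmetic difference is that you first pass to a nearby point $x'$ whose image lies in the open set $W_{j_0}$, whereas the paper works directly with ${\rm int}\,(\overline{W_j\cup W_k})$ around $f(x)$; the substance is identical.
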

\begin{proof} Firstly, let $j\in\{1,\ldots,m\}$. By Theorem \ref{sm}, $W_j$ is the union of finitely many open intervals.
Therefore, as $f$ is injective and $f\vert_{I_i}$  continuous for every $i\in\{1,\ldots,n\}$,   $f(W_j)$ is the union of finitely many  intervals. 
 By absurd assume that $f(W_j) \cap (\R\setminus \overline{W_j}) \neq \emptyset$, in others words, suppose that  there exists an interval $U \subset f(W_j)$ which intersects the non-empty open set $\R\setminus \overline{W_j}$. 
Therefore the set  $U\cap (\R\setminus \overline{W_j})$ has a non-empty interior. 
 \mbox{Lemma \ref{fww}}  together with the definition  $W_j={\rm int}\,(W^s(\gamma_j))$ yield
 $$
 U \subset f(W_j) \subset f(W^s(\gamma_j))\subset W^s(\gamma_j).
 $$
 On the other hand,  
the set $W^s(\gamma_j)\setminus {W_j}$ has empty interior which contradicts our assumption. 
So $f(W_j)\subset \overline{W_j}$ and the claim holds for every $j\in\{1,\ldots,m\}$.
 
 Now we consider the case $j=m+1$. By absurd assume that $f(W_{m+1})\not\subset  \overline{W_{m+1}}$,
 therefore there exist $x\in W_{m+1}$ and $j,k\in\{1,\ldots,m\}$
 such that $f(x)\in {\rm int}\,(\overline{W_j\cup W_k})$, where  $j=k$ may happen. 
For $\epsilon>0$ small enough 
 $(f(x)-\epsilon,f(x) +\epsilon)\subset {\rm int}\,(\overline{W_j\cup W_k})$ and,  by \mbox{Theorem \ref{sm}}, $(f(x)-\epsilon,f(x))\cup (f(x),f(x) +\epsilon)\subset W_j\cup W_k$.  Let  $i\in\{1,\ldots,n\}$ be the unique index such that $x\in I_i$. 
 As $f$ is injective and $f\vert_{I_i}$ continuous, there exists
 $\delta>0$ such that 
 $$f(x-\delta,x)\subset (f(x)-\epsilon,f(x)) \cup (f(x),f(x) +\epsilon)\,\,\textrm{or}\,\,f(x,x+\delta)\subset (f(x)-\epsilon,f(x))\cup (f(x),f(x) +\epsilon).$$ 
 Therefore, by Lemma \ref{fww}, either $(x-\delta,x)\subset W_j\cup W_k$ or
 $(x,x+\delta)\subset W_j\cup W_k$. This contradicts the fact that $x\in W_{m+1}$.
Thus $f(W_{m+1})\subset \overline{W_{m+1}}$.
\end{proof}
\begin{lemma}\label{inside} If $z\in \overline{W_{i}}\cap \overline{W_{j}}$ for some $i\neq j$ then
there exists an integer $q\ge 0$ such that $f^q(z)\in\{x_1,\ldots,x_{n-1}\}\cap \partial W_i\cap \partial W_j$.
\end{lemma}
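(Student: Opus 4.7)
My plan is to argue by contradiction: suppose $f^q(z) \notin \{x_1,\ldots,x_{n-1}\}$ for every $q \ge 0$, derive a contradiction, then read off the statement by taking the minimal $q$ with $f^q(z)$ a discontinuity. A preparatory observation is that $\overline{W_i} \cap \overline{W_j}$ is a \emph{finite} set. Indeed, by Theorem \ref{sm}, each $W_k$ with $k \le m$ is a finite union of open intervals; and $W_{m+1}$ is also a finite union of open intervals, since it is the interior of the complement of finitely many closed intervals. Because $W_i$ and $W_j$ are disjoint open sets, $\overline{W_i} \cap \overline{W_j} = \partial W_i \cap \partial W_j$, a finite set.

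Next I would establish forward invariance: under the absurd hypothesis, $f^q(z) \in \overline{W_i} \cap \overline{W_j}$ for every $q \ge 0$. The inductive step combines Lemma \ref{arefi} (which gives $f(W_k) \subset \overline{W_k}$ for $k \in \{i,j\}$) with continuity of $f$ at $f^q(z)$: approximating $f^q(z) \in \overline{W_k}$ by a sequence in $W_k$ and taking images yields $f^{q+1}(z) \in \overline{W_k}$. Consequently, the orbit of $z$ is confined to the finite set $\partial W_i \cap \partial W_j$; since $f$ is injective, $z$ must be periodic, say of period $k$. Its orbit $\gamma_z$ contains no discontinuity, so is regular by Proposition \ref{intreg}.

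The final step is to contradict the existence of such a $\gamma_z$. I would pick $\delta > 0$ so small that the intervals $(z-\delta, z+\delta), f(z-\delta, z+\delta), \ldots, f^{k-1}(z-\delta, z+\delta)$ all avoid $\{x_1,\ldots,x_{n-1}\}$; the Lipschitz hypothesis then gives $f^k((z-\delta,z+\delta)) \subset (z - \kappa^k \delta, z + \kappa^k \delta)$, so every point of $(z-\delta,z+\delta)$ has $\omega$-limit set $\gamma_z$, i.e.\ $(z-\delta,z+\delta) \subset {\rm int}\,(W^s(\gamma_z))$. If $\gamma_z = \gamma_k$ for some $k \le m$, this forces $(z-\delta,z+\delta) \subset W_k$; otherwise Lemma \ref{ime} makes ${\rm int}\,(W^s(\gamma_z))$ disjoint from every $\overline{W_k}$ with $k \le m$, hence contained in $W_{m+1}$. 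Either way, a full open neighborhood of $z$ lies inside a single $W_k$, contradicting $z \in \overline{W_i} \cap \overline{W_j}$ with $i \ne j$. The main obstacle I anticipate is the bookkeeping in the case $\gamma_z \notin \{\gamma_1,\ldots,\gamma_m\}$, where Lemma \ref{ime} is essential to separate ${\rm int}\,(W^s(\gamma_z))$ from the listed stable manifolds.
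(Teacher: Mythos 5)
Your proposal is correct and follows essentially the same route as the paper: assume no iterate of $z$ hits a discontinuity, use Lemma \ref{arefi} plus continuity to trap $O_f(z)$ in the finite set $\partial W_i\cap\partial W_j$, conclude periodicity and regularity via Proposition \ref{intreg}, and derive a contradiction. Your final step merely spells out explicitly the contradiction that the paper leaves implicit (a regular orbit in $\overline{W_i}\cap\overline{W_j}$ would give $z$ a neighborhood inside a single $W_k$, which is impossible for a point of $\overline{W_i}\cap\overline{W_j}$ with $i\neq j$).
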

\begin{proof} We may assume that $z\not\in \{x_1,\ldots,x_{n-1}\}$, otherwise the proof is finished by taking
$q=0$. Thus, $f$ is continuous in a neighborhood
of $z$. By continuity of $f$ and Lemma \ref{arefi}, we have that $f(z)\in \overline{W_{i}}\cap \overline{W_{j}}$ and the reasoning can be repeated. Hence, we may assume that $f(z)\not\in \{x_1,\ldots,x_{n-1}\}$, otherwise
we set $q=1$ and the proof is finished. By repeating this reasoning over and over again, we obtain that
either $f^q(z)\in \{x_1,\ldots,x_{n-1}\}$ for some $q\ge 0$ (and the proof is finished) or
\mbox{$O_f(z)\cap \{x_1,\ldots,x_{n-1}\}=\emptyset$}. This together with Lemma \ref{arefi} yield $O_f(x)\subset\overline{W_{i}}\cap\overline{W_{j}}$. By Theorem \ref{sm}, $\overline{W_{i}}\cap\overline{W_{j}}$ is a finite point-set, thus $O_f(z)$ is a periodic orbit. By Proposition \ref{intreg}, $O_f(z)$ is regular periodic orbit, which contradicts  $O_f(z)\subset\overline{W_{i}}\cap\overline{W_{j}}$. Hence, there exists an integer $q\ge 0$ such that $f^q(z)\in \{x_1,\ldots,x_{n-1}\}$.
By Lemma \ref{arefi} and by \mbox{Theorem \ref{sm}}, $\overline{W_i}\cap\overline{W_j}\subset
\partial W_i\cap\partial W_j$.
 \end{proof}
 
 \begin{lemma}\label{xyk} The following statements are true:
 \begin{itemize}
 \item [(a)] if $W_{m+1}\neq\emptyset$ then $m\le n-1$;
 \item [(b)] if $W_{m+1}=\emptyset$ then $m\le n$.
 \end{itemize}
 \end{lemma}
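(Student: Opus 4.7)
The plan is to enumerate the components of $\bigcup_{j=1}^{m+1}W_j$ from left to right, colour each component by which $W_j$ contains it, and then bound the number of distinct colours by $n$ via a colour-debut argument. By Theorem~\ref{sm} each $W_j$ is a finite union of open intervals, so I list these components as $C_1,\dots,C_N$ (left to right) and set $j(i)\in\{1,\dots,m+1\}$ to be the unique index with $C_i\subset W_{j(i)}$. Since $\gamma_j\subset W_j$ forces each $j\le m$ to appear among $j(1),\dots,j(N)$, and $m+1$ appears precisely when $W_{m+1}\neq\emptyset$, the number of distinct colours in the sequence is $m+\mathbf{1}_{W_{m+1}\neq\emptyset}$. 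Both parts (a) and (b) of the lemma reduce to showing that this count is at most $n$.

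Let $z_i$ denote the point separating $C_i$ from $C_{i+1}$. Whenever $j(i)\neq j(i+1)$, the point $z_i$ lies in $\overline{W_{j(i)}}\cap\overline{W_{j(i+1)}}$, and Lemma~\ref{inside} yields $q_i\ge 0$ with
\[
d_i \;:=\; f^{q_i}(z_i) \;\in\; \{x_1,\dots,x_{n-1}\}\cap \partial W_{j(i)}\cap \partial W_{j(i+1)}.
\]
The crux of the proof is a one-dimensional pigeonhole: by Theorem~\ref{sm} each $W_k$ is a finite disjoint union of open intervals, and the $W_k$'s themselves are pairwise disjoint, so at any point of $(0,1)$ at most one $W_k$ can have that point as a right endpoint of one of its component intervals, and at most one as a left endpoint. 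Hence $\{k\mid d_i\in\partial W_k\}$ has at most two elements, and so it coincides with the already-identified pair $\{j(i),j(i+1)\}$.

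I call the index $i$ a \emph{colour debut} if $j(i+1)\notin\{j(1),\dots,j(i)\}$; the number of colour debuts equals (distinct colours)$-1$, and every debut satisfies $j(i)\neq j(i+1)$, so $d_i$ is defined at each debut. I would then show that $i\mapsto d_i$ is injective on the set of debuts. For if $i<i'$ are both debuts with $d_i=d_{i'}$, the pigeonhole from the previous paragraph forces $\{j(i),j(i+1)\}=\{j(i'),j(i'+1)\}=:\{a,b\}$; but $a$ and $b$ have both appeared in $j(1),\dots,j(i+1)$, hence by step $i'$, so $j(i'+1)\in\{a,b\}$ cannot be new, contradicting the debut hypothesis at $i'$. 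Consequently the number of colour debuts is at most $|\{x_1,\dots,x_{n-1}\}|=n-1$, so the number of distinct colours is at most $n$, which yields $m\le n-1$ in case (a) and $m\le n$ in case (b).

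The main obstacle is the combinatorial bookkeeping---choosing the right notion of ``debut'' so that the pigeonhole forces an injection into the discontinuity set. Once Lemma~\ref{inside} is invoked to place each $d_i$ on the correct pair of boundaries, the one-dimensional structure from Theorem~\ref{sm} supplies the pigeonhole, and the debut-injectivity argument is then elementary.
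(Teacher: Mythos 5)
Your argument is correct, and while it shares the paper's overall strategy (use Lemma \ref{inside} to send boundary data of the $W_j$'s into the discontinuity set $\{x_1,\ldots,x_{n-1}\}$ and then count), the way you obtain injectivity is genuinely different. The paper defines a map $\beta$ on \emph{all} the sets $W_j$ by $\beta(W_j)=f^{q_j}(\inf W_j)$ with $q_j$ minimal, and proves injectivity dynamically: from $\beta(W_i)=\beta(W_k)$ it pulls back via injectivity of $f$ to get $f^{q_k-q_i}(y_k)=y_i$, propagates the adjacent pair $\{W_k,W^{(k)}\}$ along the orbit by continuity, and rules out the ``swapped'' case by comparing infima; since all $m+1$ sets are in the domain, it must also reserve $x_0$ as an extra target for the leftmost set. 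You instead inject only the $(\text{number of colours})-1$ ``debuts'', and your injectivity comes from a static, purely one-dimensional pigeonhole: a point of $(0,1)$ can lie on the boundary of at most two of the pairwise disjoint finite unions of open intervals $W_k$, so $d_i=d_{i'}$ forces $\{j(i),j(i+1)\}=\{j(i'),j(i'+1)\}$, which is incompatible with $i'$ being a debut. This buys you a shorter injectivity proof, avoids the special treatment of the point $0$, and needs only the $n-1$ targets $x_1,\ldots,x_{n-1}$. Two small points worth tightening: (i) the fact that consecutive components $C_i,C_{i+1}$ actually share an endpoint $z_i$ should be justified from $\bigcup_{j=1}^{m+1}\overline{W_j}=[0,1]$ together with Theorem \ref{sm} (the complement of $\bigcup_j W_j$ is finite); and (ii) the justification that every colour $j\le m$ occurs should be that $W_j\supset \mathrm{int}\,(\Omega(\gamma_j))\neq\emptyset$, rather than ``$\gamma_j\subset W_j$'' --- a periodic point can sit on $\partial W_j$ (e.g.\ at the closed end of a half-open trapping interval), so the orbit itself need not lie in the open set $W_j$. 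Neither point affects the validity of the argument.
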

 \begin{proof} Firstly, let us prove (a). Let $\mathcal{W}=\{W_1,\ldots,W_{m+1}\}$.
 We will define an injective map 
 \begin{equation}\label{beta}
\beta:\mathcal{W}\rightarrow \{x_{0},\ldots,x_{n-1}\}.
\end{equation} Set $y_j=\inf W_j$ for  all $j\in\{1,\ldots,m+1\}$.
By definition and by Lemma \ref{ime},
\begin{equation}\label{01}
W_1,\ldots,W_{m+1} \; \mbox{are pairwise disjoint and} \;  [0,1]=\bigcup_{j=1}^{m+1} \overline{W_j}.
\end{equation}
Let $j_0\in\{1,\ldots,m+1\}$ be the index that satisfies $y_{j_0}=x_0=0$. Set $\beta(W_{j_0})=x_0$.  
 By (\ref{01}) and by \mbox{Theorem \ref{sm}}, $y_1$,$y_2$,\ldots,$y_{m+1}$ are pairwise disjoint. 
Let $i\in\{1,\ldots,m+1\}$, $i\neq j_0$, thus there exists $W^{(i)} \in \mathcal{W},W^{(i)}\neq W_i$, such that $y_i \in \partial W^{(i)}\cap \partial W_i$. Moreover, for $\epsilon$ small enough, we have
$$
(y_i-\epsilon,y_i) \subset W^{(i)}  \; \mbox{ and } \; (y_i,y_i+\epsilon)\subset W_i.
$$
Using Lemma \ref{inside}, let $q_i=\min\{q\geq 0:f ^q(y_i)\in \{x_1,\ldots,x_{n-1}\}\}$ and set $\beta(W_i)=f^{q_i}(y_i)$. 
 
Now we show that the map $\beta$ is injective. Let $1\leq i, k \leq m+1$, with $q_i\leq q_k$, be such that $\beta(W_i)=\beta(W_k)$. It is easy to see that $i=j_0$ or $k=j_0$ imply $i=k=j_0$. Thus we may assume that $i\neq j_0$ and $k\neq j_0$. By the injectivity of $f$, 
 $$
 f^{q_k-q_i}(y_k)=y_i,
 $$
 where $0\leq q_k-q_i\leq  q_k$. Notice that $q_k=0$ implies $q_i=0$. In this case, $y_i=y_k$, which contradicts $q_1,q_2,\ldots,q_{m+1}$ are pairwise disjoint. Hence, we may assume that $q_k\ge 1$.
 We have that $f$ is continuous on a neighborhood of $f^j(y_k)$ for every $0 \leq j \leq q_k-1$.  Therefore, by \mbox{Lemma \ref{arefi}}, $\{W_i,W^{(i)}\}=\{W_k,W^{(k)}\}$. There are two possibilities: either (i) $W_i=W_k$ or
 $
(ii)\,\,W_i=W^{(k)} \; \mbox{and} \;  W_k=W^{(i)}.
 $
 Suppose that (ii) happens. If $y_i<y_k$ then by $(\ref{01})$, there exists $\epsilon>0$ such that
 $(y_i-\epsilon,y_i)\subset W_k$ and thus $\inf W_k<y_i<y_k$, which is a contradiction.
 By analogy, assuming $y_k<y_i$ also yields a contradiction. Therefore, (ii) cannot happen
 and so $W_i=W_k$. This proves that $\beta$ is a well defined injective map, thus $m \leq n-1$.
 To prove (b), we neglect $W_{m+1}$ and define $\mathcal{W}=\{W_1,\ldots,W_m\}$. Replacing in the above proof $m+1$ by $m$, we obtain that $m-1 \leq n-1$, thus $m\leq n$.
 \end{proof}
 
 Notice that Theorem \ref{weaker} is a corollary of Lemma \ref{xyk}.

\section{Proof of  Theorem \ref{main}}

In this section we will prove Theorem 1.1. In this respect, the combinatorial lemma we present now
is going to be of paramount importance. We will keep the notation of  previous sections.

\subsection{The Combinatorial Lemma}
 
 An {\it $s$-chain} is a collection  of $s\ge 1$ pairs of positive integers
 $$A_0=(a_0,b_0),
 A_1=(a_1,b_1),\ldots,A_{s-1}=(a_{s-1},b_{s-1})$$ satisfying 
$
a_{\ell\, {\rm mod} \, s}=a_{\ell-1}$ or $b_{\ell\,{\rm mod}\,s}=a_{\ell-1},
$
for every $\ell\in \{1,2,\ldots,s\}$.
The set $\displaystyle S=\bigcup_{\ell=0}^{s-1} \{a_{\ell}\}\cup \{b_{\ell}\}$ is its {\it set of coordinates} whose cardinality is denoted by $\# S$.\\

\begin{example}
If $A_{\ell}=(1,\ell+2)$ for every $\ell\in\{0,\ldots,s-1\}$ then
$$
S = \{1,2, \ldots , s+1\}\quad {\rm and}\quad \#S=s+1.
$$
\end{example}
\begin{example}
If $s=4$, $A_0=(1,2),A_1=(1,3),A_2=(4,1)$ and $A_3=(2,4)$ then
$$
S = \{1,2,3,4\} \quad {\rm and}\quad \#S=4=s.
$$
\end{example} 

We would like to know how large the set $S$ can be in the general case. \\

\begin{lemma}[Combinatorial Lemma]\label{comb} If $A_0=(a_0,b_0 )$,  $A_1=(a_1,b_1 )$, \ldots, $A_{s-1}=(a_{s-1},b_{s-1} )$ is an $s$-chain then
$\#S \leq s+1$. Moreover, $\#S= s+1$ if and only if  $a_0=a_1= \ldots = a_{s-1}$ and the elements $a_0,b_0, b_1, \ldots, b_{s-1}$ are pairwise distinct.
\end{lemma}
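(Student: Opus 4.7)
The plan is to bound how many new coordinates each successive pair can introduce, and then exploit the cyclic closing condition at $\ell=s$ to pin down the extremal case.

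For the inequality $\#S\le s+1$, I would argue by greedy accumulation. Define the partial coordinate sets $S_\ell=\bigcup_{j=0}^{\ell}\{a_j,b_j\}$. Then $|S_0|\le 2$, and for each $\ell\in\{1,\ldots,s-1\}$ the chain condition guarantees $a_{\ell-1}\in A_\ell\cap S_{\ell-1}$, whence $|S_\ell\setminus S_{\ell-1}|\le 1$. Summing gives $\#S=|S_{s-1}|\le 2+(s-1)=s+1$. Note that only the non-cyclic part of the chain condition enters this count.

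For the characterization of equality, the backward direction is immediate: if $a_0=\cdots=a_{s-1}$ and $a_0,b_0,b_1,\ldots,b_{s-1}$ are pairwise distinct, then $S=\{a_0,b_0,b_1,\ldots,b_{s-1}\}$ has $s+1$ elements. For the forward direction, suppose $\#S=s+1$. Each inequality above is tight: $|S_0|=2$ forces $a_0\ne b_0$, and $|A_\ell\setminus S_{\ell-1}|=1$ forces exactly one coordinate of $A_\ell$ to lie in $S_{\ell-1}$. Since $a_{\ell-1}\in A_\ell\cap S_{\ell-1}$ is supplied by the chain condition, this ``old'' coordinate must be $a_{\ell-1}$ itself. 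I would then separate into two mutually exclusive cases at each step $\ell\ge 1$: \emph{Case A}, in which $a_\ell=a_{\ell-1}$ and $b_\ell$ is the new coordinate; and \emph{Case B}, in which $b_\ell=a_{\ell-1}$ and $a_\ell$ is the new coordinate. The degenerate possibility $a_\ell=b_\ell$ is ruled out by tightness: the chain condition would then force $a_\ell=a_{\ell-1}\in S_{\ell-1}$, contradicting $a_\ell\notin S_{\ell-1}$.

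The decisive step is to use the cyclic condition at $\ell=s$, namely $a_{s-1}\in A_0$, to exclude Case B altogether. Suppose for contradiction that Case B occurs somewhere, and let $\ell^*$ be the largest such index. Then $a_{\ell^*}$ is new, so $a_{\ell^*}\notin S_{\ell^*-1}$. For every $\ell>\ell^*$ Case A holds, hence $a_\ell=a_{\ell-1}$; iterating yields $a_{s-1}=a_{\ell^*}$. But the cyclic condition places $a_{s-1}\in A_0\subset S_0\subset S_{\ell^*-1}$ (using $\ell^*\ge 1$), contradicting $a_{\ell^*}\notin S_{\ell^*-1}$. Hence every step $\ell\ge 1$ is Case A, which gives $a_0=a_1=\cdots=a_{s-1}$ and exhibits $S$ as $\{a_0,b_0,b_1,\ldots,b_{s-1}\}$; since $\#S=s+1$, these $s+1$ coordinates are pairwise distinct. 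The expected main obstacle is cleanly establishing the Case A / Case B dichotomy—verifying that tightness really rules out coincidences like $a_\ell=b_\ell$ or both of $a_\ell,b_\ell$ being old. Once the dichotomy is in place, the cyclic condition finishes the argument almost for free.
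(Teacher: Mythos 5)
Your proof is correct, and it is organized differently from the paper's. The paper proves the bound by induction on $s$: given an $(s{+}1)$-chain, it either truncates to the $s$-chain $A_0,\dots,A_{s-1}$ (when $a_{s-1}\in\{a_0,b_0\}$) or splices $A_{s-1}$ out by replacing it with the pair $(a_0,a_{s-2})$ to recover an $s$-chain, and in both cases checks that at most one new coordinate is gained. You instead do a direct greedy count on the partial sets $S_\ell$, which gives the inequality in one line, and then carry out a tightness analysis (the Case A / Case B dichotomy, closed off by the cyclic condition $a_{s-1}\in\{a_0,b_0\}$) to derive the equality characterization. This is a genuine gain: the paper's written proof only establishes $\#S\le s+1$ and leaves the ``moreover'' part (which is actually invoked later, in the proof of Lemma \ref{les}) essentially unproved, whereas your argument proves both directions of the characterization explicitly, including the key observation that the cyclic closing condition is what forbids Case B and forces $a_0=\cdots=a_{s-1}$. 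The one small point worth stating explicitly is that in Case A tightness forces $b_\ell\notin S_{\ell-1}$, so that $S=\{a_0,b_0,\dots,b_{s-1}\}$ really lists $s+1$ elements and pairwise distinctness follows from $\#S=s+1$; you do say this, so nothing is missing.
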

\begin{proof} The assertion follows by induction on $s$.
The claim holds for $s=1$. Now assume that the claim holds for some $s\geq 1$. Let $A_0=(a_0,b_0),\ldots, A_{s-1}=(a_{s-1},b_{s-1}),A_s=(a_s,b_s)$ be an $(s+1)$-chain and let $S$ be its set of coordinates. We have to prove that $\#S\le s+2$.

If $a_{s-1} =a_0$ or $a_{s-1} =b_0$, then $A_0,A_1,\ldots , A_{s-1}$  is an $s$-chain, then by the induction hypothesis the set 
$\displaystyle \cup_{\ell=0}^{s-1} \{a_{\ell}\}\cup \{b_{\ell}\}$ has at most $s+1$ elements. Now if we add $a_s$ and $b_s$, as at least one of them is also equal to $a_0$ or $b_0$, the set $S$ must have at most $(s+1)+1$ elements.

Otherwise, $a_{s-1} \neq a_0$ and $a_{s-1} \neq b_0$, thus $b_s= a_{s-1}$ and $a_s=a_0$ or $a_s=b_0$ which means that $\displaystyle S=\cup_{\ell=0}^{s-1} \{a_{\ell}\}\cup \{b_{\ell}\}$. One of the coordinates of $A_{s-1}$ equals $a_{s-2}$. Now we replace the couple $(a_{s-1}, b_{s-1} )$ by the couple $(a_0,a_{s-2})$, so the sequence
$$
(a_0,b_0),(a_1,b_1),\ldots, (a_{s-2},b_{s-2}),(a_0,a_{s-2})
$$
becomes an $s$-chain. By the induction hypothesis, the set $\cup_{\ell=0}^{s-2} \{a_{\ell}\}\cup \{b_{\ell}\} \cup \{a_0\}\cup \{a_{s-2}\}=\cup_{\ell=0}^{s-2} \{a_{\ell}\}\cup \{b_{\ell}\}$ has at most $s+1$ elements. The set $S$ of the $(s+1)$-chain has in addition at most one more new element which implies that $\#S \leq (s+1)+1$.
This proves the claim.
\end{proof}

\subsection{An application of the Combinatorial Lemma} In what follows, let $\gamma$ be a degenerate $k$-periodic orbit of $f$ and let
$x= \min \gamma$. For the next results, we assume that
\begin{equation}\label{star}
\gamma\subset [0,1)\setminus W_{m+1}.
\end{equation}
The hypothesis (\ref{star}) will be removed in Lemma \ref{xxx}.

\begin{lemma}\label{ttoo} $\gamma\subset \cup_{j=1}^{m+1}\partial W_j$.
\end{lemma}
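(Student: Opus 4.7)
The statement is really a direct bookkeeping argument using only the partition identity $\bigcup_{j=1}^{m+1} \overline{W_j}=[0,1]$ together with the disjointness of stable manifolds (Lemma \ref{ime}) and the hypothesis $(\ref{star})$. Concretely, I would pick an arbitrary $p\in\gamma$ and rule out membership of $p$ in the open set $W_j$ for each $j\in\{1,\ldots,m+1\}$; once this is done, $p$ must lie in the topological boundary of at least one $W_j$.

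\textbf{Key steps.} First, for $j\in\{1,\ldots,m\}$, suppose by contradiction that $p\in W_j$. Since $W_j={\rm int}\,(W^s(\gamma_j))\subset W^s(\gamma_j)$, this would force $\omega(p)=\gamma_j$. But $p$ is a periodic point of the orbit $\gamma$, so $\omega(p)=\gamma$; this would give $\gamma=\gamma_j$, contradicting the fact that $\gamma$ is degenerate while $\gamma_j$ is regular (all the $\gamma_j$ are regular by assumption, whereas $\gamma$ was chosen degenerate). Hence $p\notin W_j$ for every $j\in\{1,\ldots,m\}$. Second, the hypothesis $(\ref{star})$ directly yields $p\notin W_{m+1}$.

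\textbf{Conclusion.} Combining the above, $p\notin \bigcup_{j=1}^{m+1} W_j$. On the other hand, since $p\in [0,1)\subset [0,1]=\bigcup_{j=1}^{m+1}\overline{W_j}$, there exists some index $j_0\in\{1,\ldots,m+1\}$ with $p\in\overline{W_{j_0}}$. Because $W_{j_0}$ is open, we have $\overline{W_{j_0}}\setminus W_{j_0}=\partial W_{j_0}$, and therefore $p\in\partial W_{j_0}\subset \bigcup_{j=1}^{m+1}\partial W_j$. Since $p$ was an arbitrary point of $\gamma$, this proves $\gamma\subset\bigcup_{j=1}^{m+1}\partial W_j$.

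\textbf{Where the work lies.} There is essentially no obstacle here: the statement is a structural consequence of the definitions together with Lemma \ref{ime} and the covering identity for the $\overline{W_j}$. The genuinely delicate hypothesis $(\ref{star})$ is what makes the exclusion of $W_{m+1}$ free of charge; the real combinatorial/topological difficulty (comparing degenerate orbits with the $x_i$'s via the $\beta$-style map in the spirit of Lemma \ref{xyk}) is deferred to the subsequent Lemma \ref{xxx}, which removes the assumption $(\ref{star})$.
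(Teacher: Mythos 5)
Your proposal is correct and follows essentially the same route as the paper: exclude each point of $\gamma$ from every open set $W_j$ (for $j\le m$ via $\omega(p)=\gamma\ne\gamma_j$ since $\gamma$ is degenerate and $\gamma_j$ regular; for $j=m+1$ via hypothesis (\ref{star})), then invoke the covering identity (\ref{01}) to place the point in some $\partial W_{j_0}$. No meaningful difference from the paper's argument.
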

\begin{proof} By (\ref{01}), it is enough
to prove that $\gamma\subset [0,1)\setminus W_j$ for all $j\in\{1,\ldots,m+1\}$. Firstly we consider $j\in \{1,\ldots,m\}$. In this case, there exists a regular periodic orbit $\gamma_j$ such that $\omega(y)=\gamma_j$ for all $y\in W_j$. In particular, if $\gamma\cap W_j\neq\emptyset$ and $y\in\gamma\cap W_j$ then $\gamma=\omega(x)=\omega(y)=\gamma_j$, which is a contradiction, because $\gamma$ is a degenerate periodic orbit. Thus, $\gamma\subset [0,1)\setminus W_j$ for every $j\in\{1,\ldots,m\}$. By (\ref{star}), $\gamma\subset [0,1)\setminus W_{m+1}$. Hence, $\gamma\subset [0,1)\setminus\cup_{j=1}^{m+1} W_j$. By (\ref{01}), $[0,1)\setminus\cup_{j=1}^{m+1} W_j=\cup_{j=1}^{m+1} \partial W_j$.
\end{proof}

\begin{lemma}\label{lemf2} There exist integers $s\ge 1$ and $0\le {\ell}_0<{\ell}_1<\ldots <{\ell}_{s-1}\le k-1$ such that 
$\gamma\cap \{x_0,\ldots,x_{n-1}\}=\{f^{{\ell}_0}(x),f^{{\ell}_1}(x),\ldots, f^{{\ell}_{s-1}}(x)\}$. 
\end{lemma}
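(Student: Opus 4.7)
The statement really has two parts: an existence part (that $s\ge 1$, i.e.\ the intersection is nonempty) and a purely notational part (indexing the members of the intersection by their return time to $x=\min\gamma$). The second part is just relabeling, so essentially all the content lies in producing \emph{some} iterate $f^\ell(x)$ that hits $\{x_0,\ldots,x_{n-1}\}$.

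The approach is to use the contrapositive of Proposition \ref{intreg}. Since $\gamma$ is degenerate by assumption, Proposition \ref{intreg} tells us that $\gamma$ cannot avoid the discontinuity set $\{x_1,\ldots,x_{n-1}\}$; that is, at least one point of $\gamma$ lies in $\{x_1,\ldots,x_{n-1}\}\subset\{x_0,x_1,\ldots,x_{n-1}\}$. Hence $\gamma\cap\{x_0,\ldots,x_{n-1}\}$ is nonempty.

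Once nonemptyness is established, since $\gamma=\{x,f(x),\ldots,f^{k-1}(x)\}$, every element of $\gamma$ is uniquely written as $f^\ell(x)$ for a unique $\ell\in\{0,1,\ldots,k-1\}$ (uniqueness follows from $k$ being the period of $\gamma$ together with the injectivity of $f$). Setting
$$
L=\{\,\ell\in\{0,1,\ldots,k-1\}:f^\ell(x)\in\{x_0,\ldots,x_{n-1}\}\,\},
$$
the preceding paragraph shows $L\neq\emptyset$. Let $s=\#L\ge 1$ and list the elements of $L$ in increasing order as $\ell_0<\ell_1<\cdots<\ell_{s-1}$. Then by construction
$$
\gamma\cap\{x_0,\ldots,x_{n-1}\}=\{f^{\ell_0}(x),f^{\ell_1}(x),\ldots,f^{\ell_{s-1}}(x)\},
$$
which is exactly the claim.

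\textbf{Main obstacle.} There is essentially no obstacle: the hypothesis (\ref{star}) plays no role here, and the lemma reduces to invoking the contrapositive of Proposition \ref{intreg} together with a straightforward enumeration. The only care needed is in explaining why $L$ is nonempty and why ordering it produces the stated enumeration; both are immediate.
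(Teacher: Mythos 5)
Your proof is correct and is exactly the argument the paper intends: the paper's entire proof is "It follows immediately from Proposition \ref{intreg}," i.e.\ the contrapositive you spell out (a degenerate orbit must meet the discontinuity set), followed by the same trivial enumeration. No discrepancies.
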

\begin{proof} It follows immediately from Proposition \ref{intreg}.
\end{proof}

Because $[0,1)=\cup_{j=1}^n I_j$, 
for each $\ell\in\{0,\ldots,k-1\}$, there exists a unique $j(\ell)\in\{1,\ldots,n\}$ such that
$f^\ell(x)\in I_{j(\ell)}$. 

\begin{lemma}\label{ababab} Let $\{\ell_0,\ell_1\ldots,\ell_{s-1}\}$ be as in Lemma \ref{lemf2}.
For each $\ell\in\{\ell_0,\ell_1,\ldots,\ell_{s-1}\}$, there exists a uniquely defined ordered pair \mbox{$(a_{\ell},b_{\ell})\in \{1,\ldots,m+1\}\times
\{1,\ldots,m+1\}$}\label{lemf1} satisfying the  following conditions:
\begin{itemize}
\item [(a)] $f^{\ell}(x)\in {\rm int}\,(\overline{W_{a_{\ell}}}\cup \overline{W_{b_{\ell}}})$ if $f^{\ell}(x)\neq 0$;
\item [(b)] $f^{\ell}(x)\in\overline{W}_{a_{\ell}}$ and $a_{\ell}=b_{\ell}$  if $f^{\ell}(x)=0$;
\item [(c)] $I_{j({\ell})}\cap (f^{\ell}(x)-\epsilon,f^{\ell}(x)+\epsilon)\subset\overline W_{a_{\ell}}$ for $\epsilon>0$ small enough.
\end{itemize}
\end{lemma}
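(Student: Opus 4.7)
The plan is to define $(a_\ell, b_\ell)$ directly from the local picture of the collection $W_1, \ldots, W_{m+1}$ near the point $y := f^\ell(x)$, and then to verify the three conditions by inspection. By Theorem \ref{sm} (together with the remark, made after its proof, that $W_{m+1}$ is itself a finite union of open intervals), the set $E := \bigcup_{j=1}^{m+1}\partial W_j$ is finite; moreover $y \in E$ by Lemma \ref{ttoo}. I would first fix $\epsilon > 0$ so small that $(y-\epsilon, y+\epsilon)\setminus\{y\}$ meets neither $E$ nor $\{x_0, \ldots, x_n\}$. This choice guarantees that each nonempty one-sided neighborhood of $y$ lies inside a single $W_j$ and inside a single continuity interval $I_i$.

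\emph{Case $y = 0$.} Here $j(\ell) = 1$, since $I_1$ is the only continuity interval with $0$ as an endpoint. Let $a_\ell$ be the unique index with $(0, \epsilon)\subset W_{a_\ell}$ and set $b_\ell := a_\ell$. By Lemma \ref{ttoo}, $y \in \partial W_{a_\ell}\subset\overline{W_{a_\ell}}$, proving (b); and $I_1\cap(y-\epsilon, y+\epsilon)\subset[0,\epsilon)\subset\overline{W_{a_\ell}}$, proving (c).

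\emph{Case $y = x_i$ with $i\in\{1,\ldots,n-1\}$.} Let $c, d\in\{1,\ldots,m+1\}$ be the unique indices with $(y-\epsilon, y)\subset W_c$ and $(y, y+\epsilon)\subset W_d$. A crucial observation is that $y$ lies in no $W_j$ at all: if $y\in W_j$ with $j\le m$, then $\omega(y) = \gamma_j$ would be a regular periodic orbit, contradicting that $\gamma$ is degenerate; and $y\in W_{m+1}$ is ruled out by hypothesis (\ref{star}). Hence $y\in\overline{W_c}\cap\overline{W_d}$, so $(y-\epsilon, y+\epsilon)\subset\overline{W_c}\cup\overline{W_d}$. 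Since $I_{j(\ell)}$ has $y$ as an endpoint, it lies entirely on one side of $y$; I would define $a_\ell$ to be the index ($c$ or $d$) on the $I_{j(\ell)}$-side and $b_\ell$ to be the other. Then (a) holds, and (c) follows because $I_{j(\ell)}\cap(y-\epsilon, y+\epsilon)$ is contained in $\{y\}$ together with the $I_{j(\ell)}$-sided half-neighborhood, which lies in $\overline{W_{a_\ell}}$.

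\emph{Uniqueness and main obstacle.} The choice of $\epsilon$ forces any $a'_\ell$ satisfying (c) to coincide with the unique index whose open set contains the $I_{j(\ell)}$-sided half-neighborhood of $y$, so $a_\ell$ is unique. Given $a_\ell$, condition (a) (resp.\ (b)) then pins down $b_\ell$ as the index on the opposite side (resp.\ as $a_\ell$ itself). The delicate point I would need to address is the borderline coincidence $c = d$ when $y\neq 0$, occurring when two distinct components of the same $W_c$ abut at $y$; the consistent choice is $a_\ell = b_\ell = c$, which is compatible with (a) because $y\in{\rm int}\,(\overline{W_c})$ in that situation. The main obstacle is therefore verifying that $y$ is always a genuine boundary point of the relevant $W_j$'s, which is secured by hypothesis (\ref{star}) combined with the degeneracy of $\gamma$.
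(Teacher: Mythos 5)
Your proof is correct and follows essentially the same route as the paper: both use Lemma \ref{ttoo} together with the finite-union-of-intervals structure from Theorem \ref{sm} to pin down the one-sided neighborhoods of $f^{\ell}(x)$, define $a_{\ell}$ by the side containing $I_{j(\ell)}$ and $b_{\ell}$ by the opposite side, and treat $f^{\ell}(x)=0$ and the two-sided coincidence as special cases with $b_{\ell}=a_{\ell}$. Your write-up is in fact somewhat more explicit than the paper's about why $f^{\ell}(x)$ lies in no $W_j$ and about the borderline case where both half-neighborhoods fall in the same $W_c$.
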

\begin{proof} Let $\ell\in\{\ell_0,\ell_1,\ldots,\ell_{s-1}\}$, thus there exists a unique integer $j(\ell)\in\{1,\ldots,n\}$ such that
\mbox{$f^\ell(x)\in I_{j(\ell)}\cap \partial I_{j(\ell)}$}. By Lemma \ref{ttoo}, $\gamma\subset\cup_{j=1}^{m+1}\partial W_j$. 
By Theorem \ref{sm} and by (\ref{01}),
there exists a unique index \mbox{$a_{\ell}\in \{1,\ldots,m+1\}$} such that
$$I_{j({\ell})}\cap (f^{\ell}(x)-\epsilon,f^{\ell}(x)+\epsilon)\subset\overline {W_{a_{\ell}}}$$
for $\epsilon>0$ small enough. If  $f(x)=0$ or if $\overline{W_{a_\ell}}$ contains the whole interval
$(f(x)-\epsilon,f(x)+\epsilon)$, we set $b_{\ell}=a_{\ell}$.

Otherwise, there exists a unique index $b_{\ell}\in \{1,\ldots,m+1\}$, $b_{\ell}\neq a_{\ell}$, such that
$$(f^{\ell}(x)-\epsilon,f^{\ell}(x)+\epsilon)\cap\overline W_{b_{\ell}}\neq\emptyset$$
for all $\epsilon>0$ small enough. We have proved there exists a unique pair of indices 
$(a_{\ell},b_{\ell})\in \{1,\ldots,m+1\}\times \{1,\ldots,m+1\}\ $ which satisfies (a), (b) and (c).
\end{proof}

  \begin{lemma}\label{les} Let $(a_{\ell},b_{\ell})$, and
  $0\le {\ell}_0<{\ell}_1<\ldots <{\ell}_{s-1}\le k-1$ be as in Lemmas \ref{lemf2} and \ref{lemf1}. The following holds:
    \begin{itemize}
\item [(a)] $A_0=(a_{{\ell}_0},b_{{\ell}_0})$, $A_1=(a_{{\ell}_1},b_{{\ell}_1})$, $\ldots$, $A_{s-1}=(a_{{\ell}_{s-1}},b_{{\ell}_{s-1}})$ is an $s$-chain;
\item [(b)]  $\#S\le s$;
\item [(c)] If $0\in \gamma$ then $\#S\le s-1$.
\end{itemize}
\end{lemma}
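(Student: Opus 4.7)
The plan is to exploit a single geometric device: at an orbit point $f^{\ell_i}(x)$, take a small open one-sided interval $U$ on the primary side (the side lying inside $I_{j(\ell_i)}$) with $U\subset W_{a_{\ell_i}}$. Such a $U$ exists because Lemma \ref{lemf1}(c) places the one-sided neighborhood in $\overline{W_{a_{\ell_i}}}$, while Theorem \ref{sm} makes $\partial W_{a_{\ell_i}}$ a finite set, so shrinking $U$ puts it inside the open set $W_{a_{\ell_i}}$. Under forward iteration, Lemma \ref{fww} keeps $f^r(U)$ inside $W^s(\gamma_{a_{\ell_i}})$ and, since an open interval in $W^s(\gamma_j)$ lies in ${\rm int}\,W^s(\gamma_j)=W_j$, all iterates stay in $W_{a_{\ell_i}}$; the case $a_{\ell_i}=m+1$ is handled analogously via Lemma \ref{arefi} and the observation that $W_{m+1}$ is a finite disjoint union of open intervals, into each of whose components $f$ maps every open subinterval of $W_{m+1}$.

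For part (a), I fix $i\in\{1,\ldots,s-1\}$ and apply this construction at $f^{\ell_{i-1}}(x)$, iterating for $\ell_i-\ell_{i-1}$ steps. The orbit avoids $\{x_1,\ldots,x_{n-1}\}$ during those steps, so $f^{\ell_i-\ell_{i-1}}(U)$ is a one-sided open interval at $f^{\ell_i}(x)$ contained in $W_{a_{\ell_{i-1}}}$. Since Lemma \ref{lemf1} places the two sides of $f^{\ell_i}(x)$ in $\overline{W_{a_{\ell_i}}}$ and $\overline{W_{b_{\ell_i}}}$, and since $\overline{W_j}\cap\overline{W_k}$ is finite for $j\ne k$ (Theorem \ref{sm}), this open interval can only be accommodated if $a_{\ell_{i-1}}$ coincides with the label of whichever side it occupies, i.e.\ $a_{\ell_{i-1}}\in\{a_{\ell_i},b_{\ell_i}\}$. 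The same argument applied to the $k-\ell_{s-1}+\ell_0$ iterates carrying $f^{\ell_{s-1}}(x)$ back to $f^{\ell_0}(x)$ yields the cyclic condition $a_{\ell_{s-1}}\in\{a_{\ell_0},b_{\ell_0}\}$.

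For part (b), Lemma \ref{comb} gives $\#S\le s+1$, with equality only if $a_{\ell_0}=\cdots=a_{\ell_{s-1}}=:a$ and $a,b_{\ell_0},\ldots,b_{\ell_{s-1}}$ are pairwise distinct. Assume equality and pick $U\subset W_a$ as above at $f^{\ell_0}(x)$. At each special time $m=\ell_i-\ell_0$, the open interval $f^m(U)\subset W_a$ cannot lie on the secondary side of $f^{\ell_i}(x)$, because that side sits in $\overline{W_{b_{\ell_i}}}$ with $b_{\ell_i}\ne a$ and $W_a\cap\overline{W_{b_{\ell_i}}}$ is a finite set. Hence $f^k(U)$ is a one-sided primary interval at $f^{\ell_0}(x)$ with $|f^k(U)|\le\kappa^k|U|$, so $f^k(U)\subsetneq U$. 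Then $V':=f^k(U)\cup\{f^{\ell_0}(x)\}$ lies inside $I_{j(\ell_0)}$, each iterate $f^r(V')=f^{k+r}(U)\cup\{f^{\ell_0+r}(x)\}$ is an interval, and $f^k(V')=f^{2k}(U)\cup\{f^{\ell_0}(x)\}\subset V'$; thus $V'$ is a trapping interval of the degenerate periodic point $f^{\ell_0}(x)$, contradicting Definition \ref{rdpp}.

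For part (c), the hypothesis $0\in\gamma$ forces $x=\min\gamma=0$, hence $\ell_0=0$ and Lemma \ref{lemf1}(b) gives $a_{\ell_0}=b_{\ell_0}$. The cyclic condition of part (a) then collapses to $a_{\ell_0}=a_{\ell_{s-1}}$, and combined with the chain condition at $i=1$ this shows that $A_1,\ldots,A_{s-1}$ is itself an $(s-1)$-chain whose coordinate set equals $S$ (since $a_{\ell_0}\in\{a_{\ell_1},b_{\ell_1}\}$); Lemma \ref{comb} then yields $\#S\le s$. If equality held, its equality case would force $a_{\ell_1}=\cdots=a_{\ell_{s-1}}=:a$ with $a,b_{\ell_1},\ldots,b_{\ell_{s-1}}$ distinct, and combined with $a_{\ell_0}=b_{\ell_0}=a_{\ell_{s-1}}=a$ this makes all $a_{\ell_i}=a$; rerunning the part (b) construction at $f^{\ell_0}(x)=0$ (where the only available one-sided neighborhood is the right side, since $0$ is the left endpoint of $[0,1)$) produces a trapping interval of $0$, again contradicting the degeneracy of $\gamma$. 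The main subtlety I anticipate is keeping the argument valid when $a=m+1$, so that no regular orbit is available to attract $U$; this is precisely why the contradiction is packaged as a trapping interval rather than as a basin-of-attraction argument.
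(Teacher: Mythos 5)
Your proof is correct and follows essentially the same route as the paper's: part (a) via forward invariance of the sets $\overline{W_j}$ along the orbit (you track a one-sided open interval where the paper tracks the orbit point itself), part (b) via the equality case of Lemma \ref{comb} combined with the construction of a one-sided trapping interval contradicting degeneracy, and part (c) via the reduction to an $(s-1)$-chain using $a_{\ell_0}=b_{\ell_0}$. The only presentational difference is that in (b) you rule out the ``flip'' of the interval at each special time directly from $b_{\ell_i}\neq a$, whereas the paper admits it as a second case and derives $a_{\ell_0}=b_{\ell_0}$ from it; the substance is identical.
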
 
\begin{proof} 
Let $r\in\{0,\ldots,s-1\}$. For convenience we set
$\ell_s=\ell_0+k$, $a_{\ell_s}=a_{\ell_0}$ and $b_{\ell_s}=b_{\ell_0}$. Notice that, because $x$ is $k$-periodic, $f^{\ell_s}(x)=f^{\ell_0}(x)$. 

By Lemma \ref{arefi} and by the continuity of $f$ on $f^{\ell}(x)$ for all $\ell\in \{0,\ldots,k-1\}\setminus \{\ell_0,\ldots,\ell_{s-1}\}$, we have that $f^{\ell_{r+1}}(x)\in \overline{W_{a_{\ell_r}}}$ for all $r\in\{0,\ldots,s-1\}$. By the unicity in the definition of $(a_{\ell_{r+1}},b_{\ell_{r+1}})$ (see Lemma \ref{ababab}), we have that  $a_{\ell_{r+1}}=a_{\ell_r}$ or $b_{\ell_{r+1}}=a_{\ell_r}$.
Thus, $A_0$, $A_1$, \ldots,
$A_{s-1}$ is an $s$-chain. By Lemma \ref{comb}, $\#S\le s+1$, where $S$ is the set of coordinates of the chain. Moreover, if $\#S=s+1$ then
\begin{equation}\label{migual}
a_{\ell_0}=a_{\ell_1}=\cdots=a_{\ell_{s-1}}.
\end{equation}
By the equation (\ref{migual}), there exists $\epsilon>0$ and an interval $U$ containing $f^{\ell_0}(x)$ such that 
$f^{\ell}(U)$ is an interval containing $f^{\ell+\ell_0}(x)$ for all $\ell\in\{0,\ldots,k\}$.
Now there are two possibilities: either  (i) $f^k(U)\subset U$ or (ii) $f^k(U)\cap U=\{f^{\ell_0}(x)\}$. The case (i) implies that $f^{\ell_0}(x)$ is a regular periodic point, which contradicts the assumption that $\gamma=O_f(x)$ is a degenerate periodic orbit.  In the case (ii) we have that $a_{\ell_0}=b_{\ell_0}$, which together with the second statement of Lemma \ref{comb} imply that $\#S\le s$. The items (a) and (b) of the assertion of the lemma are proved.

Now suppose that $0\in\gamma$. By item (c) of Lemma \ref{ababab}, $a_{i_0}=b_{i_0}$. Consequently, 
$$A_1=(a_{{\ell}_1},b_{{\ell}_1}), \ldots, A_{s-1}=(a_{{\ell}_{s-1}},b_{{\ell}_{s-1}})$$ 
is an ($s-1$)-chain. By the above, $\cup_{r=1}^{s-1} \{a_r\}\cup \{b_r\}$ has at most $s-1$ elements. Moreover, as $a_{i_0}\in \{a_{i_1},b_{i_1}\}$, we have that $S=\cup_{r=0}^{s-1} \{a_{\ell_r}\}\cup \{b_{\ell_r}\}=\cup_{r=1}^{s-1} \{a_{\ell_r}\}\cup \{b_{\ell_r}\}$ and so $\#S\le s-1$, which proves the item (c).
\end{proof}

\begin{lemma}\label{ssss} The cardinality of the set $ \{j\in\{1,...,m+1\}: \inf W_j \in \gamma\}$ is at most $s-1$. 
\end{lemma}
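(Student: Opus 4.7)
The plan is to strengthen the straightforward bound $|J|\le s$ to $|J|\le s-1$ through a three-case analysis combining Lemma~\ref{les} with a dynamical ``propagation'' argument and a ``leftmost infimum'' contradiction. First, using the injective map $\beta$ of Lemma~\ref{xyk}, for every $j\in J$ the point $y_j=\inf W_j$ lies in $\gamma$, so by $f$-invariance $\beta(j)=f^{q_j}(y_j)\in\gamma\cap\{x_0,\ldots,x_{n-1}\}$; Lemma~\ref{lemf2} says this set has exactly $s$ elements, which by injectivity of $\beta$ gives $|J|\le s$.

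For the two easy sub-cases: if $0\in\gamma$, Lemma~\ref{les}(c) already yields $\#S\le s-1$, and I would finish by the inclusion $J\subset S$ established via the propagation argument below; if on the other hand $0\notin\gamma$ but $a_{\ell_{r^\circ}}=b_{\ell_{r^\circ}}$ for some index $r^\circ$, then no $j\in J$ can have $\beta(j)=f^{\ell_{r^\circ}}(x)$, since when $y_j=f^{\ell_{r^\circ}}(x)$ both sides of $y_j$ sit in $\overline{W_{a_{\ell_{r^\circ}}}}$ so $y_j$ is interior to this closure and cannot be an infimum of any $W_j$, and when $y_j$ is non-special the propagation below forces $\{i,j\}\subset\{a_{\ell_{r^\circ}}\}$ with $i\ne j$, absurd; hence $\beta(J)$ misses $f^{\ell_{r^\circ}}(x)$ and $|J|\le s-1$.

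The main sub-case is $0\notin\gamma$ and $a_{\ell_r}\ne b_{\ell_r}$ for every $r$. I would assume $|J|=s$ for a contradiction: then $\beta|_J$ is a bijection onto $\{f^{\ell_0}(x),\ldots,f^{\ell_{s-1}}(x)\}$, and I let $j_r\in J$ be the unique element with $\beta(j_r)=f^{\ell_r}(x)$ and set $p_r=\inf W_{j_r}$. The propagation argument places $j_r\in\{a_{\ell_r},b_{\ell_r}\}\subset S$, so the $s$ distinct labels $j_0,\ldots,j_{s-1}$ exhaust $S$ (thanks to $\#S\le s$ from Lemma~\ref{les}(b)). Pick $r_*$ with $p_{r_*}$ minimal. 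Since $p_{r_*}>0$, a small left neighborhood $(p_{r_*}-\delta,p_{r_*})$ lies in some $\overline{W_i}$, and necessarily $i\ne j_{r_*}$ (were $i=j_{r_*}$, $p_{r_*}$ would be interior to $\overline{W_{j_{r_*}}}$, incompatible with $p_{r_*}=\inf W_{j_{r_*}}$). Propagation applied at $p_{r_*}$ then forces $i\in\{a_{\ell_{r_*}},b_{\ell_{r_*}}\}\subset S$, so $i=j_{r'}$ for some $r'\ne r_*$. But then $W_{j_{r'}}$ contains points strictly below $p_{r_*}$, giving $p_{r'}<p_{r_*}$, which contradicts the minimality of $p_{r_*}$.

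The main obstacle is the propagation argument that underlies the whole case analysis. For $j\in J$ with $y_j$ non-special one has $(y_j-\delta,y_j)\subset\overline{W_i}$, $(y_j,y_j+\delta)\subset W_j$, and $i\ne j$; since $y_j,f(y_j),\ldots,f^{q_j-1}(y_j)$ are all non-discontinuities, $f$ is continuous and monotone on a small neighborhood of each, and Lemma~\ref{arefi} propagates the ``two-sided transition between $\overline{W_i}$ and $\overline{W_j}$'' along the orbit (the roles of left and right may swap at each decreasing step). At the landing discontinuity $f^{\ell_r}(x)=\beta(j)$ the two sides are governed by $\{a_{\ell_r},b_{\ell_r}\}$ via Lemma~\ref{ababab}, and matching the propagated pair with this one yields $\{i,j\}\subset\{a_{\ell_r},b_{\ell_r}\}$; this pins $j$ inside $S$ when $a\ne b$ and produces the contradiction $i=j$ when $a=b$. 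This propagation is precisely what translates the positional data about the infima of the stable-manifold interiors into the combinatorial content of the $s$-chain of Lemma~\ref{les}, and is the step whose verification requires the most care.
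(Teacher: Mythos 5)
Your proof is correct and rests on the same two pillars as the paper's: the forward propagation along the orbit (via Lemma~\ref{arefi} and the uniqueness in Lemma~\ref{ababab}) that places each index $j$ with $\inf W_j\in\gamma$ inside the coordinate set $S$ of the $s$-chain, and the bounds of Lemma~\ref{les} combined with a ``region immediately to the left of the smallest relevant infimum'' argument to gain the extra $-1$ when $0\notin\gamma$. The paper organizes this slightly more directly --- it first proves $\{j:\inf W_j\in\gamma\}=\{i\in S:\inf W_i\in\gamma\}$ and then, when $0\notin\gamma$, exhibits a single $i(x)\in S$ with $\inf W_{i(x)}<\min\gamma$ (hence not in $\gamma$), giving $\#S-1\le s-1$ at once --- whereas you reach the same conclusion by contradiction at the minimal $p_{r_*}$ and need an extra case split on whether some $a_{\ell_r}=b_{\ell_r}$; the underlying mechanism is identical.
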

\begin{proof} We claim that 
\begin{equation}\label{claim3}
\# \{j\in\{1,...,m+1\}: \inf W_j \in \gamma\} =
\# \{i\in S: \inf W_{i} \in \gamma\},
\end{equation}
where $S=\cup_{r=0}^{s-1} \{a_{\ell_r}\}\cup \{b_{\ell_r}\}$.

Suppose that $\inf W_j\in\gamma$, thus there exist $r\in\{0,1,\ldots,s-1\}$
and $\ell_{r}<\ell\le \ell_{r+1}$ such that $f^{\ell}(x)=\inf W_j$, where for convenience we set
$\ell_s=\ell_0+k$, $a_{\ell_s}=a_{\ell_0}$ and $b_{\ell_s}=b_{\ell_0}$. Notice that, because the point $x=\min\gamma$ is $k$-periodic, $f^{\ell_s}(x)=f^{\ell_0}(x)$. By Lemma \ref{arefi} and the continuity of $f$ at $f^{\ell}(x)$ for every $\ell\in \{0,\ldots,k-1\}\setminus \{\ell_0,\ldots,\ell_{s-1}\}$, we have that  $f^{\ell_{r+1}}(x)\in \overline{W_{j}}$ for every $r\in\{0,\ldots,s-1\}$. By the definition of $(a_{\ell_{r+1}},b_{\ell_{r+1}})$ (see Lemma \ref{ababab}), we have that $a_{\ell_{r+1}}=j$ or $b_{\ell_{r+1}}=j$. Hence,
$$\inf W_j\in \{\inf W_{a_{\ell_{r+1}}}, \inf W_{b_{\ell_{r+1}}} \}\subset \{i\in S: \inf W_{i} \in \gamma\},$$
which proves (\ref{claim3}).

By (\ref{claim3}), it suffices to prove that $\# \{i\in S: \inf W_{i} \in \gamma\}\le s-1$. It follows from the \mbox{item (c)} of Lemma \ref{les}, that if $0\in\gamma$ then
$$\# \{i\in S: \inf W_{i} \in \gamma\}\le \#S\le s-1.$$ 
Otherwise, $0\not\in\gamma$ and $f^{\ell_0}(x)>0$. Moreover, there exists $i(x)\in S$ such that $x\in\overline{W_{i(x)}}$ and $\inf W_{i(x)}<x$. This together with the item (b) of  Lemma \ref{les} yield
$$\# \{i\in S: \inf W_{i} \in \gamma\}\le \#S-1\le s-1.$$ 
 \end{proof}
 
 Let $\beta:\mathcal{W}\to \{x_0,x_1,\ldots,x_{n-1}\}$ be the map defined in (\ref{beta}), where $\mathcal{W}=\{W_1,\ldots,W_{m+1}\}$ if $W_{m+1}\neq\emptyset$, otherwise
 $\mathcal{W}=\{W_1,\ldots,W_m\}$.
 
  Let $\gamma_1$, \ldots, $\gamma_m$ and $\tilde\gamma_{1},\ldots, \tilde{\gamma_{d}}$ be, respectively, collections of regular and degenerate periodic orbits of $f$. 

\begin{lemma}\label{xxx} The image of the map $\beta$ contains no more than $n-d$ elements.
\end{lemma}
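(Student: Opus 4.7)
The plan is to establish, for each degenerate orbit $\tilde\gamma_k$ separately, the localized bound
\[
|\beta(\mathcal{W})\cap\tilde\gamma_k|\le s_k-1,\qquad s_k:=|\tilde\gamma_k\cap\{x_0,\ldots,x_{n-1}\}|,
\]
where $s_k\ge 1$ by Proposition \ref{intreg}. Since $\tilde\gamma_1,\ldots,\tilde\gamma_d$ are pairwise disjoint, summing yields $|\beta(\mathcal{W})\cap\bigcup_{k=1}^d\tilde\gamma_k|\le \sum_k s_k-d$. Adding the trivial bound $|\beta(\mathcal{W})\setminus\bigcup_{k=1}^d\tilde\gamma_k|\le n-\sum_k s_k$, which holds because $\beta(\mathcal{W})\subset\{x_0,\ldots,x_{n-1}\}$ and the $\tilde\gamma_k$'s are disjoint, gives the desired $|\beta(\mathcal{W})|\le n-d$.

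The key new ingredient, which needs no hypothesis on $\tilde\gamma_k$, is the dynamical claim: if $W\in\mathcal{W}$ satisfies $\beta(W)\in\tilde\gamma_k$, then $\inf W\in\tilde\gamma_k$. Writing $y_W=\inf W$ and $\beta(W)=f^{q_W}(y_W)$, and letting $\tilde k$ be the period of $\tilde\gamma_k$, the $f$-invariance of $\tilde\gamma_k$ yields $f^{q_W+\tilde k}(y_W)=f^{\tilde k}(f^{q_W}(y_W))=f^{q_W}(y_W)$. Injectivity of $f^{q_W}$, inherited from that of $f$, then forces $f^{\tilde k}(y_W)=y_W$; so $y_W$ is $\tilde k$-periodic and its orbit, which contains $\beta(W)\in\tilde\gamma_k$, must coincide with $\tilde\gamma_k$. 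Combined with the injectivity of $\beta$ from Lemma \ref{xyk}, this reduces the localized bound to
\[
|\{W\in\mathcal{W}:\inf W\in\tilde\gamma_k\}|\le s_k-1.
\]

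For orbits $\tilde\gamma_k$ satisfying the hypothesis (\ref{star}) this is exactly Lemma \ref{ssss}. For the remaining orbits, which meet $W_{m+1}$, I would extend the combinatorial machinery of Lemmas \ref{lemf2}--\ref{ssss} in the natural way. First, any point of $\tilde\gamma_k$ sitting in the open set $W_{m+1}$ is an interior point and therefore cannot equal any $\inf W$, so such points do not contribute to the count; only points of $\tilde\gamma_k$ in $\bigcup_{j=1}^{m+1}\partial W_j$ matter. Second, at indices $\ell$ where $f^\ell(x)\in W_{m+1}$, I would assign $(a_\ell,b_\ell)=(m+1,m+1)$; the resulting sequence remains an $s$-chain, Combinatorial Lemma \ref{comb} applies, and the degeneracy of $\tilde\gamma_k$ continues to rule out the extreme case $\#S=s+1$ through the same trapping-interval contradiction used in Lemma \ref{les}(b).

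The main obstacle is verifying carefully that this adaptation of the chain of Lemmas \ref{lemf2}--\ref{ssss} goes through without (\ref{star}): the uniqueness of $(a_\ell,b_\ell)$ in Lemma \ref{ababab} once $a_\ell=b_\ell=m+1$ is permitted, the successor rule $a_{\ell_{r+1}}\in\{a_{\ell_r},b_{\ell_r}\}$ from Lemma \ref{les} (which rests only on Lemma \ref{arefi} and not on (\ref{star})), and the final infima-counting step of Lemma \ref{ssss} must each be rechecked. Once these routine but careful verifications are in place, the localized bound holds uniformly for every $\tilde\gamma_k$ and the proof concludes via the counting outlined in the first paragraph.
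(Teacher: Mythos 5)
Your overall architecture coincides with the paper's: a localized bound $\#\big(\tilde{\gamma_k}\cap\textrm{image}\,(\beta)\big)\le s_k-1$ for each degenerate orbit, obtained by showing that $\beta(W)\in\tilde{\gamma_k}$ forces $\inf W\in\tilde{\gamma_k}$ (your injectivity argument here correctly justifies a step the paper states tersely) and then invoking the injectivity of $\beta$ together with Lemma \ref{ssss}; the final count is the same. For degenerate orbits satisfying (\ref{star}) your proof is the paper's proof. The gap is the complementary case $\tilde{\gamma_k}\cap W_{m+1}\neq\emptyset$, which you explicitly leave as a program: you propose to re-run Lemmas \ref{lemf2}--\ref{ssss} with the extra symbol $(a_\ell,b_\ell)=(m+1,m+1)$, but you only list the verifications that would be required (uniqueness in Lemma \ref{ababab}, the successor rule, the endgame of Lemma \ref{ssss}) without performing any of them. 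Since Lemma \ref{ttoo} genuinely fails for such orbits, this is exactly where the content lies, and ``routine but careful'' does not discharge it.

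The paper avoids that extension entirely with a short direct argument you should substitute. If $\tilde{\gamma_k}\cap W_{m+1}\neq\emptyset$, then Lemma \ref{arefi} traps the whole orbit in $\overline{W_{m+1}}$. If in fact $\tilde{\gamma_k}\subset W_{m+1}$, then no point of the orbit lies in $\bigcup_{j}\partial W_j\supset\textrm{image}\,(\beta)$, so $\tilde{\gamma_k}\cap\textrm{image}\,(\beta)=\emptyset$ and the localized bound is trivial because $s_k\ge 1$. Otherwise the orbit meets $\partial W_{m+1}$ as well; Lemma \ref{inside} then produces a discontinuity $x_i\in\tilde{\gamma_k}\cap\partial W_{m+1}$ with a preimage $z\in\tilde{\gamma_k}\cap W_{m+1}$, $f(z)=x_i$. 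Either $z$ itself is a discontinuity of $f$, hence a point of $\tilde{\gamma_k}\cap\{x_0,\ldots,x_{n-1}\}$ lying in the open set $W_{m+1}$ and therefore outside $\textrm{image}\,(\beta)$, or $f$ is continuous near $z$ and pushes a neighbourhood of $z$ inside $\overline{W_{m+1}}$, placing $x_i\in{\rm int}\,(\overline{W_{m+1}})$, where again no value of $\beta$ can live (values of $\beta$ lie in $\partial W_i\cap\partial W_{i'}$ for distinct $i,i'$ by Lemma \ref{inside}). Either way one element of $\tilde{\gamma_k}\cap\{x_0,\ldots,x_{n-1}\}$ is excluded from the image, which is all the localized bound requires. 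If you prefer to keep your combinatorial extension, you must actually prove that the modified sequence is still an $s$-chain and that the infima count in Lemma \ref{ssss} survives once the orbit leaves $\bigcup_j\partial W_j$; as written, that part of your argument is a sketch, not a proof.
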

\begin{proof} Let ${\ell}\in\{1,\ldots,d\}$. We claim that 
\begin{equation}\label{edd}
\# \Big(\tilde{\gamma_{\ell}}\cap \textrm{image}\,(\beta)\Big)\le \# \Big(\tilde{\gamma_{\ell}}\cap\{x_0,\ldots,x_{n-1}\}\Big)-1.
\end{equation}
We split the proof of the claim into three cases.
\begin{itemize} 
\item [Case (i)]  $W_{m+1}\neq\emptyset$ and $\tilde{\gamma_{\ell}}\subset [0,1)\setminus W_{m+1}$.

Let $x_i\in  \tilde{\gamma_{\ell}}\cap \textrm{image}\,(\beta)$ and let $j\in\{1,\ldots,m+1\}$ be such that
$x_i=\beta(W_j)$. By the definition of $x_i$ and $\beta$, we have that $x_i\in \tilde{\gamma_{\ell}}\cap
O_f(\inf W_j)$, thus $O_f(\inf W_j)=\tilde{\gamma_\ell}$. In particular, $\inf W_j\in\tilde{\gamma_\ell}$.
This together with the fact that 
 ${\rm image}\,(\beta)\subset \{x_0,\ldots,x_{n-1}\}$, Lemma \ref{ssss} and the injectivity of $\beta$ yield
\begin{eqnarray*}
\#\Big( \tilde{\gamma_{\ell}}\cap \textrm{image}\,(\beta)\Big) &=& \# \Big( \{x_0,\ldots,x_{n-1}\}\cap\tilde{\gamma_{\ell}}\cap {\rm image}\,(\beta)\Big)  \\ &\le& \# \Big( \{j\in\{1,...,m+1\}: \inf W_j \in \tilde{\gamma_{\ell}}\}\Big)  \le \#\Big( \tilde{\gamma_{\ell}}\cap \{x_0,\ldots,x_{n-1}\}\Big)-1,
\end{eqnarray*}
which proves the claim in Case (i).

\item [Case (ii)] $W_{m+1}\neq\emptyset$ and $\tilde{\gamma_{\ell}}\cap W_{m+1}\neq\emptyset$.

In this case, by \mbox{Lemma \ref{arefi}}, we have that $\tilde{\gamma_{\ell}}\subset \overline{W_{m+1}}$.
Moreover, as \mbox{$\tilde{\gamma_{\ell}}\cap W_{m+1}\neq\emptyset$}, we cannot have
$\tilde{\gamma_\ell}\subset\partial W_{m+1}$. Hence, there are two possibilities: either (a)
$\tilde{\gamma_\ell}\subset W_{m+1}$ or (b) $\tilde{\gamma_{\ell}}\cap W_{m+1}\cap\partial W_{m+1}\neq\emptyset$. In the case (a), because $W_{m+1}$ is open and  ${\rm image}\,(\beta)\subset \cup_{j=1}^{m+1} \partial W_j$, we have that
$\tilde{\gamma_\ell}\cap {\rm image}\,(\beta)=\emptyset$, and thus (\ref{edd}) holds. In case (b), by Lemma \ref{inside}, $\tilde{\gamma_{\ell}}\cap \{x_0,\ldots,x_{n-1}\}\cap \partial W_{m+1}\neq\emptyset$. Moreover, by the hypothesis of case (b), there exists $z\in W_{m+1}$ and $x_i\in \tilde{\gamma_{\ell}}\cap \{x_0,\ldots,x_{n-1}\}\cap \partial W_{m+1}$ such that $f(z)=x_i$. If $z\in \{x_0,\ldots,x_{n-1}\}$ then, by proceeding as above, we can see that 
$z\not\in {\rm image}\,(\beta)$ and so (\ref{edd}) holds. Otherwise, $f$ is continuous on a neighborhood of $z$ and so
$x_i\in {\rm int}\,(\overline{W_{m+1}})$. In this case, by the definition of $\beta$, $x_i\not\in {\rm image}\,(\beta)$, hence (\ref{edd}) holds. This proves the claim in Case (ii).

\item [ Case (iii)] $W_{m+1}=\emptyset$. 

The proof of the Case (i) holds word-by-word for the Case (iii), provided we replace 
\mbox{$\{1,\ldots,m+1\}$}  by $\{1,\ldots,m\}$  in that proof.
\end{itemize}

By the claim, for each $\ell\in\{1,\ldots,d\}$, there exists
$\tilde{x_{\ell}}\in \tilde{\gamma_{\ell}}\cap\{x_0,\ldots,x_{n-1}\}$ such that \mbox{$\tilde{x_{\ell}}\not\in \textrm{image}\,(\beta)$}. Therefore,
$$
\textrm{image}\,(\beta)\subset \{x_0,\ldots,x_{n-1}\}\setminus \{\tilde{x_1},\ldots,\tilde{x_{d}}\}.
$$
In this way, $\# \textrm{image}\,(\beta) \le n-d$.
\end{proof}

 By Lemma \ref{xyk}, $f$ has at most $n$ regular periodic orbits, thus $m\le n$.
By the \mbox{Proposition \ref{intreg}}, every degenerate periodic orbit of $f$ contains a discontinuity, and so $d\le n$. 
Therefore, a corollary of these two results is that the number of periodic orbits of $f$ is bounded by $2n$, that is, $m+n\le 2n$. By using  Lemma \ref{xxx}, we provide now a stronger version of Lemma \ref{xyk}. 
 
 \begin{lemma}\label{xykmod} The following statements are true:
 \begin{itemize}
 \item [(a)] if $W_{m+1}\neq\emptyset$ then $m+d\le n-1$;
 \item [(b)] if $W_{m+1}=\emptyset$ then $m+d\le n$.
 \end{itemize}
  \end{lemma}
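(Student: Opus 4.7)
The statement is a direct strengthening of Lemma \ref{xyk}, and my plan is to obtain it by combining the injectivity of the map $\beta$ (constructed in the proof of Lemma \ref{xyk}) with the upper bound on $\#\,\mathrm{image}(\beta)$ provided by Lemma \ref{xxx}. Recall that $\beta:\mathcal{W}\to\{x_0,\ldots,x_{n-1}\}$ was shown to be injective, where $\mathcal{W}=\{W_1,\ldots,W_{m+1}\}$ in case (a) and $\mathcal{W}=\{W_1,\ldots,W_m\}$ in case (b).

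For part (a), I would observe that $W_{m+1}\neq\emptyset$ forces $\#\mathcal{W}=m+1$. Since $\beta$ is injective, $\#\,\mathrm{image}(\beta)=m+1$. Applying Lemma \ref{xxx} yields
\[
m+1 \;=\; \#\,\mathrm{image}(\beta) \;\le\; n-d,
\]
which rearranges to $m+d\le n-1$, as desired.

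For part (b), the hypothesis $W_{m+1}=\emptyset$ gives $\#\mathcal{W}=m$, so injectivity of $\beta$ yields $\#\,\mathrm{image}(\beta)=m$, and Lemma \ref{xxx} gives $m\le n-d$, i.e.\ $m+d\le n$.

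There is no real obstacle here: all the heavy lifting (the construction of $\beta$, its injectivity, and the delicate counting through the Combinatorial Lemma that rules out a coordinate of $\beta$ on each degenerate orbit) has already been carried out in Lemmas \ref{xyk} and \ref{xxx}. The only thing to verify carefully is the cardinality of $\mathcal{W}$ in the two cases, which follows from the very definition of $\mathcal{W}$ used in the proof of Lemma \ref{xyk}. Theorem \ref{main} then follows immediately, since in either case $m+d\le n$.
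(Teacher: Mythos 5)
Your proposal is correct and follows essentially the same route as the paper: both deduce the bound by combining the injectivity of $\beta$ on $\mathcal{W}$ (from the proof of Lemma \ref{xyk}) with the estimate $\#\,\mathrm{image}(\beta)\le n-d$ from Lemma \ref{xxx}, distinguishing the two cases only by the cardinality of $\mathcal{W}$. Nothing is missing.
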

  
  \begin{proof} By  Lemma \ref{xxx}, the image of the injective map $\beta:\mathcal{W}\to \{x_0,\ldots,x_{n-1}\}$ has at most $n-d$ elements. In case (a), $\mathcal{W}=\{W_1,\ldots,W_{m+1}\}$ and so
  $m+1\le n-d$, that is to say, $m+d\le n-1$. In case (b), $\mathcal{W}=\{W_1,\ldots,W_m\}$ and
  $m\le n-d$, that is, $m+d\le n$.
  \end{proof}
  
  \begin{proof}[Proof of Theorem \ref{main}] By items (a) and (b) of Lemma \ref{xykmod}, $f$ has at most $n$ periodic orbits. Moreover, by item (a) of Lemma \ref{xykmod}, if $f$ has $n$ periodic orbits, then $W_{m+1}=\emptyset$. In this case, $\bigcup_{i=1}^m\overline{W_i}=[0,1]$. For every $x\in W_i$, we have that $\omega(x)$ is the periodic orbit $\gamma_i$. Now if $x\in\partial W_i$, then either $O_f(x)\cap W_i\neq\emptyset$ (and so $\omega(x)=\gamma_i)$ or $O_f(x)$ is contained in the finite set $\bigcup_{i=1}^n \partial W_i$ (see Theorem \ref{sm}), and thus $O_f(x)$ is periodic.
    \end{proof}
    
    \section{Proof of Theorem \ref{main2}}

  \begin{proof}[Proof of Theorem \ref{main2}]
    
   By item (c) of Lemma \ref{3itens}, $\{f^{\ell}(F_j)\mid \ell\ge 0, j\in\{1,\ldots, r\}\}$ is a denumerable
    family of pairwise disjoint open intervals whose union $G=\bigcup_{\ell\ge 0} \bigcup_{j=1}^r f^{\ell}(F_j)$ has Lebesgue measure one. Moreover, the subintervals of $G$ generate the Borel $\sigma$-algebra in $[0,1)$.
    Let $K\subset G$ be an interval, then there exist
    $\ell\ge 0$, $1\le j\le r$, and a subinterval $J$ of $F_j$ such that $K=f^{\ell}(J)$. We set
    \begin{equation}\nu(K)=\label{medidaemG} \nu(f^{\ell}(J))=\dfrac{1}{2^{(\ell+1)} r}\dfrac{\vert J\vert}{\vert F_j\vert},\quad \textrm{thus} \quad \nu(f(K))=\dfrac12 \nu (K).
    \end{equation}
    The set function $K\mapsto \nu(K)$ can be extended to a non-atomic Borel probability measure positive on open intervals, as
    $$\nu (G)=\sum_{\ell\ge 0}\sum_{j=1}^r \dfrac{1}{2^{(\ell+1)} r}=\sum_{j=1}^r\dfrac1r=1.$$
In this way, the map $h:[0,1)\to [0,\infty)$ defined by
$$
h(x)=
\begin{cases}
0, & \textrm{if}  \quad x=0 \\ 
\nu((0,x))  & \textrm{if} \quad  0<x<1
\end{cases}
$$
is continuous and strictly increasing. Moreover,
$
h(1)=\nu ((0,1))=\nu (G)=1.$ Therefore, $h:[0,1)\to [0,1)$ is a homeomorphism. Let $\hat{f}:[0,1)\to [0,1)$ be the map defined by $\hat{f}=h\circ f\circ h^{-1}$. We have that $\hat{f}$ is continuous on $[0,1)\setminus \{h(x_1),\ldots, h(x_{n-1})\}$
and its continuity intervals are $\hat{I}_i=h(I_i)$, $1\le i\le n$.
 
    Let  $B\subset [0,1)$ be an interval. Being Lispchitz, $f$ takes $\nu$-null measure set onto $\nu$-null measure set, thus $\nu(f(B))=\nu (f(B\cap G))$. Now it follows from (\ref{medidaemG}) that
    
  \begin{equation}\label{2eq}
     \nu(f(B))=\dfrac12\nu(B),\quad \textrm{for every interval}\,\,B\subset [0,1).
    \end{equation}
    

    Let $(u,v)\subset h(I_i)$ be an interval. If $f\vert _{I_i}$ is increasing then
    \begin{equation}\label{timp}
    \big ( f(h^{-1}(u)),f(h^{-1}(v))\big)=f\big (h^{-1}(u),h^{-1}(v)\big).
    \end{equation}
    By (\ref{2eq}) and (\ref{timp}),
    \begin{eqnarray*}
    \hat{f}(v)-\hat{f}(u)&=&h\big(f(h^ {-1}(v))\big)-h\big(f(h^{-1}(u))\big)=\nu\Big(\big(0,f(h^{-1}(v))\big)\Big)-\nu\Big(\big( 0,f(h^{-1}(u))\big)\Big)\\
    &=&\nu \Big(\big(f(h^{-1}(u)), f(h^{-1}(v))\big) \Big)=\nu\Big(f\big(h^{-1}(u),h^{-1}(v)\big)\Big)\\&=&\dfrac12\nu \Big(h^{-1}(u),h^{-1}(v)\Big)=\dfrac12\Big[ \nu \Big( \big(0,h^{-1}(v)\big)\Big)-\nu \Big( \big(0,h^{-1}(u)\big)\Big)\Big]\\&=&
    \dfrac12 \big[h(h^{-1}(v))-h(h^{-1}(u))\big]=\dfrac12(v-u).
    \end{eqnarray*}
    Otherwise, $f\vert_{I_i}$ is decreasing and
    $$\hat{f}(v)-\hat{f}(u)=-\dfrac12 (v-u).
    $$
    We have proved that $\hat{f}\vert_{\hat{I}_i}$ is linear for every $i\in\{1,\ldots,n\}$.
       
\end{proof}

\appendix
 \section{Piecewise increasing piecewise contractions of $n$ intervals}
 
The aim of this section is to show that the proof of Theorem 1.1 is much simpler if the piecewise contraction is also piecewise increasing (see Theorem A.1). Nevertheless, such simple proof fails for general piecewise contractions. For completeness sake, the proof is presented below.

\begin{theorem} \label{easy} If $f$ is a piecewise contraction of $n$ intervals such that $I_i=[x_{i-1},x_i)$ and $f\vert_{I_i}$ is increasing for every $i\in\{1,\ldots,n\}$, then
$f$ has at most $n$ periodic orbits. Moreover, every periodic orbit of $f$ is regular.
\end{theorem}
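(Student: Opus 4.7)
The plan splits the proof into two independent parts: (i) showing every periodic orbit is regular by exhibiting a trapping interval of the form $[p, p+\epsilon)$, and (ii) bounding the number of (now-regular) periodic orbits by $n$ through an injection into the $n$-element set $\{x_0, x_1, \ldots, x_{n-1}\}$ (with $x_0 = 0$).

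For (i), let $p$ be a $k$-periodic point of $f$ and let $i(\ell) \in \{1, \ldots, n\}$ be the unique index with $f^\ell(p) \in I_{i(\ell)} = [x_{i(\ell)-1}, x_{i(\ell)})$, so that $f^\ell(p) < x_{i(\ell)}$ strictly. Taking $J = [p, p+\epsilon)$ with $\epsilon > 0$ to be determined, the left-closedness of each $I_{i(\ell)}$ together with the fact that $f|_{I_i}$ is continuous, increasing, and $\kappa$-Lipschitz gives by induction on $\ell$ that $f^\ell(J) = [f^\ell(p), f^\ell(p) + \delta_\ell)$ with $\delta_\ell \leq \kappa^\ell \epsilon$. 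The constraint $f^\ell(J) \subset I_{i(\ell)}$ for $\ell = 0, 1, \ldots, k-1$ reduces to $\kappa^\ell \epsilon \leq x_{i(\ell)} - f^\ell(p)$, a finite system of strictly positive constraints, hence satisfiable. For such $\epsilon$, $f^k(J) = [p, p + \delta_k) \subset J$, so $J$ is a trapping interval in the sense of Definition \ref{ti}; the inclusion $f^{k+m}(J) \subset f^m(J)$ for all $m \geq 0$ then propagates the interval property to all iterates, so $p$ is regular by Definition \ref{rdpp}.

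For (ii), let $\gamma_1, \ldots, \gamma_m$ be the periodic orbits (all regular by (i)) and set $W_j := {\rm int}\,(W^s(\gamma_j))$ and $y_j := \inf W_j$. The sets $W_j$ are nonempty (each contains the interior of the trapping region $\Omega(\gamma_j)$) and pairwise disjoint by Lemma \ref{ime}. The plan is to define $\beta(\gamma_j) := f^{q_j}(y_j) \in \{x_0, \ldots, x_{n-1}\}$, where $q_j \geq 0$ is the first time the forward orbit of $y_j$ meets $\{x_0, \ldots, x_{n-1}\}$, and show $\beta$ is injective. Injectivity is routine: if $\beta(\gamma_j) = \beta(\gamma_k)$ with $q_j \leq q_k$, injectivity of $f$ gives $y_j = f^{q_k - q_j}(y_k)$, so $\omega(y_j) = \omega(y_k)$, and a short right-continuity argument identifies $\omega(y_j)$ with $\gamma_j$ and $\omega(y_k)$ with $\gamma_k$.

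The central and hardest step is showing $q_j$ always exists. If $y_j = 0$, take $q_j = 0$; otherwise, suppose for contradiction that $f^\ell(y_j) \notin \{x_0, \ldots, x_{n-1}\}$ for every $\ell \geq 0$. Then each $f^\ell$ is continuous at $y_j$, and for $\epsilon > 0$ small enough the Lipschitz estimate $|f^\ell(y_j) - f^\ell(y_j \pm \epsilon)| \leq \kappa^\ell \epsilon$ holds uniformly in $\ell$. Since $y_j = \inf W_j$ and $W_j$ is open, some $y_j + \epsilon \in W_j$, hence $\omega(y_j + \epsilon) = \gamma_j$; the Lipschitz bound then propagates to $\omega(y_j) = \gamma_j$, and in turn to $\omega(y_j - \epsilon') = \gamma_j$ for small $\epsilon' > 0$, forcing $(y_j - \epsilon_0, y_j) \subset W^s(\gamma_j)$. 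Being an open subinterval, it lies in $W_j$, contradicting $y_j = \inf W_j$. Injectivity of $\beta$ then yields $m \leq n$. The main obstacle is verifying that the Lipschitz estimate is valid uniformly in $\ell$ even when the orbit of $y_j$ approaches a discontinuity contained in $\gamma_j$; this uses that the approach is geometric at rate $\kappa^k$ per period while the Lipschitz error decays as $\kappa^\ell$, so that a single small $\epsilon$ controls both scales.
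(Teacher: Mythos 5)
Your part (i) is correct and is essentially the first half of the paper's own proof: the paper takes the canonical choice $\epsilon(p)=\min\{\epsilon'>p : \epsilon'\in\{1\}\cup\bigcup_{\ell=0}^{k-1}f^{-\ell}(\{x_1,\ldots,x_{n-1}\})\}$ rather than ``$\epsilon$ small enough'', but the induction showing $f^\ell([p,\epsilon))=[f^\ell(p),\cdot)$ and $f^k(J)\subset J$ is the same. The problem is part (ii). The step on which everything hinges --- the existence of $q_j$, i.e.\ that the forward orbit of $y_j=\inf W_j$ must meet $\{x_0,\ldots,x_{n-1}\}$ --- is not actually proved. Your contradiction argument needs the estimate $|f^\ell(y_j)-f^\ell(y_j\pm\epsilon)|\le\kappa^\ell\epsilon$ to hold for a \emph{single} $\epsilon>0$ and \emph{all} $\ell$ simultaneously, which requires that the interval from $f^\ell(y_j)$ of length $\kappa^\ell\epsilon$ (and its left analogue) never straddles a discontinuity. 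Since $y_j$ is not known to be periodic, or even asymptotically periodic, its orbit may come within $\kappa^\ell\epsilon$ of a discontinuity infinitely often, defeating every fixed $\epsilon$; and your stated resolution (``the approach is geometric at rate $\kappa^k$ per period'') presupposes that the orbit of $y_j$ converges to a periodic orbit containing that discontinuity, which is precisely part of what the contradiction argument is supposed to deliver. The reasoning is circular. In the paper this existence statement is Lemma \ref{inside}, and it is obtained only after the stable-manifold structure theorem (Theorem \ref{sm}) and the invariance lemma (Lemma \ref{arefi}); it does not follow from a bare Lipschitz estimate.

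The paper's Appendix avoids all of this by never leaving the periodic orbits. With $\epsilon(p)$ as above, some iterate $f^\ell(J_p)$ of $J_p=[p,\epsilon(p))$ equals $[f^\ell(p),x_i)$ \emph{exactly}, so one defines $\alpha(\gamma)=x_i$ using that right endpoint. If $\alpha(\gamma_1)=\alpha(\gamma_2)=x_i$, the two half-open intervals $[f^{\ell_1}(p_1),x_i)$ and $[f^{\ell_2}(p_2),x_i)$ are nested; the larger one is forward-invariant under $f^{k_1k_2}$ and contracts onto its left endpoint, while the other left endpoint is a fixed point of $f^{k_1k_2}$ lying inside it, so the two endpoints coincide and $\gamma_1=\gamma_2$. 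This gives $m\le n$ with no reference to stable manifolds, to $\inf W_j$, or to orbits of non-periodic points. If you insist on the route through $\inf W_j$, you must first establish Theorem \ref{sm} and Lemma \ref{arefi} (or equivalents), at which point you have reproduced the much heavier machinery of Sections 3--4 for a case the paper settles in one page.
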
  

\begin{proof} Let $p\in [0,1)$ be a $k$-periodic point of $f$ and let $\gamma$ be its orbit. As $f$ is injective, the set
$$
\bigcup_{\ell=0}^{k-1}f^{-\ell}\big(\{x_1,\ldots,x_{n-1}\}\big)
$$
has at most $k(n-1)$ points, thus the minimum 
$$
\epsilon(p)=\min \left\{\epsilon'>p \,\Big |\, \epsilon' \in \{1\} \cup  \bigcup_{\ell=0}^{k-1}f^{-\ell}\big(\{x_1,\ldots,x_{n-1}\}\big) \right\}
$$ 
is well defined. Moreover,  $ \epsilon(p)=1$ or there exist $0\leq \ell \leq k-1$ and $1\leq i \leq n-1$ such that $ \epsilon(p)=f^{-\ell}(x_i)$, thus in this case   $f^{\ell} (\epsilon(p))=x_i$. 

As $f$ is uniformly continuous on $[x_{n-1},1)$, for convenience we denote 
$$
\displaystyle f^{m+1}(1)=f^m(\lim_{x \rightarrow 1-}f(x)), \; \mbox{ for every } \; m\geq 0.
$$
We define the interval   $J_p=[p,\epsilon(p))$ and claim that for every $0\le\ell\le k$,
\begin{align}
&f^\ell(J_p)=\big [f^\ell(p),f^\ell(\epsilon(p)) \big) \label{eqqq1}\\
&\bigcap_{m\ge 0} f^{mk}\big( f^\ell(J_p)\big)=\{f^{\ell}(p)\} \label{eqqq2}.
\end{align}
By the definition of $\epsilon(p)$, we have that $(p,\epsilon(p))\cap \{x_1,\ldots,x_{n-1}\}=\emptyset$, thus
$$J_p=[p,\epsilon(p))\subset [x_{i-1},x_i)=I_i\,\,\,\textrm{for some}\,\,1\le i\le n.$$

As $f\vert_{I_i}$ is continuous and increasing, we have that $f(J_p)=\big[f(p),f(\epsilon(p))\big)$. By recurrence, now assume that there exists $0\leq \ell \leq k-1$ such that $f^\ell(J_p)=\big[f^{\ell}(p),f^\ell(\epsilon(p)) \big)$. By the definition of $\epsilon(p)$,  we have that $\big(f^{\ell}(p),f^\ell(\epsilon(p) )\big)\cap\{x_1,\ldots,x_{n-1}\}=\emptyset$, thus
$$f^{\ell} (J_p)=\big[f^\ell(p),f^\ell(\epsilon(p))\big)\subset [x_{i-1},x_i)=I_i\,\,\,\textrm{for some}\,\,1\le i\le n.$$
As $f\vert_{I_i}$ is continuous and increasing, we have that $f^{\ell+1}(J_p)=\big [f^{\ell+1}(p),f^{\ell+1}(\epsilon(p)) \big)$ and (\ref{eqqq1}) follows by induction.

As $f^k(p)=p$ and $f^k$ is $\kappa^k-$Lipschitz on $J_p$,  we have, for every $x\in J_p$,
$$
0\leq f^k(x)-p\leq \kappa^k( x-p)<x-p.
$$
Therefore, $f^{k}(J_p)\subset J_p$ and $\bigcap_{m\ge 0}f^{mk}(J_p)=\{p\}$. In the same way,
$f^k\big({f^\ell}(J_p)\big)\subset f^\ell(J_p)$ and thus $\bigcap_{m\ge 0}f^{mk}(f^\ell(J_p))=\{f^\ell(p)\}$ for all $\ell\ge 0$. This proves (\ref{eqqq2}).

By the definition of $\epsilon(p)$, there exist $0\le \ell\le k-1$ and $1\le i\le n$ such that
\mbox{$f^\ell(J_p)=[f^\ell(p),x_i)$}. In this way, we may define a map $\alpha:\gamma\mapsto x_i$ that assigns to each periodic orbit
$\gamma$ of $f$, a discontinuity $x_i=\alpha(\gamma)$. 

We claim that the map $\alpha$ is injective. Let $\gamma_1,\gamma_2$ be periodic orbits of $f$ and let \mbox{$p_j\in\gamma_j$} be a \mbox{$k_j$-periodic} point for every $j\in\{1,2\}$. Assume that \mbox{$\alpha(\gamma_1)=\alpha(\gamma_2)=x_i$} and let
\mbox{$1\le \ell_1,\ell_2\le k-1$} be such that $f^{\ell_j}(J_{p_j})=\big[f^{\ell_j}(p_j),x_i \big)$ for every $j\in\{1,2\}$. We may assume that $f^{\ell_1}(p_1)<f^{\ell_2}(p_2)<x_i$. Hence, the $k_2$-periodic point $q_2=f^{\ell_2}(p_2)$ belongs to $f^{\ell_1}(J_{p_1})$ and thus $q_2\in\bigcap_{m\ge 0} f^{mk_1k_2}\big( f^{\ell_1}(J_{p_1})\big)$.
 On the other hand, by (\ref{eqqq2}), $\bigcap_{m\ge 0} f^{mk_1k_2}\big( f^{\ell_1}(J_{p_1})\big)=\{f^{\ell_1}(p_1)\}$. Hence, $f^{\ell_1}(p_1)=q_2$ and so $\gamma_1=\gamma_2$.

We have proved that there is an injective map from the set of periodic orbits of $f$ to the set $\{x_1,\ldots,x_n\}$. Therefore $f$ has at most $n$ periodic orbits.
\end{proof}

In Example 3, the map $h_2$ shows that the proof of Theorem \ref{easy} only holds if the continuity interval $I_i$
is equal to $[x_{i-1},x_i)$, otherwise degenerate periodic orbits may appear. Furthermore, such proof fails if $f\vert_{[x_{i-1},x_i)}$ is decreasing for some $1\le i \le n$.

The argument of the proof of Theorem \ref{easy}  does not suffice to prove \mbox{Theorem \ref{main}}. We recall that the main idea in our proof was to construct an injective map  assigning to each (attractive) periodic orbit $\gamma$ a point of
the set $\{x_1,\ldots,x_n\}$ belonging to the boundary of the trapping region ${\Omega(\gamma)}$. The best we can reach with such argument is the bound $3n$ for the number of periodic orbits of all piecewise contractions.

\end{document}